\newcommand\myeq{\mathrel{\stackrel{\makebox[0pt]{\mbox{\normalfont\tiny loc}}}{=}}}
\newtheorem{theorem}{Theorem}[section]
\newtheorem*{theorem*}{Theorem}
\newtheorem{theorem-non}{Theorem}
\newtheorem{lemma-non}{Lemma}
\theoremstyle{definition} 
\newtheorem{thm}{Theorem}
\theoremstyle{definition} 
\newtheorem{corollarynon}{Corollary}
\newtheorem{conjecture-non}{Conjecture}
\newtheorem{corollary-non}{Corollary}
\newtheorem{proposition}[theorem]{Proposition}
\newtheorem{lemma}[theorem]{Lemma}
\newtheorem*{lemma*}{Lemma}
\newtheorem{corollary}[theorem]{Corollary}
\newtheorem*{conjecture*}{Conjecture}
\newtheorem{problem}[theorem]{Problem}
\theoremstyle{definition}
\newtheorem{definition}[theorem]{Definition}
\newtheorem{example}[theorem]{Example}
\theoremstyle{remark}
\newtheorem{remark}[theorem]{Remark}
\DeclareMathOperator{\im}{im}
\numberwithin{equation}{section}
\begin{document}
\title[Levi-Civita Ricci-flat metrics on non-K\"{a}hler Calabi-Yau manifolds]{Levi-Civita Ricci-flat metrics on non-K\"{a}hler Calabi-Yau manifolds}

\author{Eder M. Correa}


\address{{UFMG, Avenida Ant\^{o}nio Carlos, 6627, 31270-901 Belo Horizonte - MG, Brazil}}
\address{E-mail: {\rm edermc@ufmg.br.}}

\begin{abstract} 
In this paper, we provide new examples of Levi-Civita Ricci-flat Hermitian metrics on certain compact non-K\"{a}hler Calabi-Yau manifolds, including every compact Hermitian Weyl-Einstein manifold, every compact locally conformal hyperK\"{a}hler manifold, certain suspensions of Brieskorn manifolds, and every generalized Hopf manifold provided by suspensions of exotic spheres. These examples generalize previous constructions on Hopf manifolds. Additionally, we also construct new examples of compact Hermitian manifolds with nonnegative first Chern class that admit constant strictly negative Riemannian scalar curvature. Further, we remark some applications of our main results in the study of the Chern-Ricci flow on compact Hermitian Weyl-Einstein manifolds. In particular, we describe the Gromov-Hausdorff limit for certain explicit finite-time collapsing solutions which generalize previous constructions on Hopf manifolds.
\end{abstract}

\maketitle

\hypersetup{linkcolor=black}
\tableofcontents

\hypersetup{linkcolor=black}
\section{Introduction}
Given a compact Hermitian manifold $(M,g,J)$, with fundamental $2$-form $\Omega$, as shown in \cite[Theorem 1.2]{LiuYang}, the first Levi-Civita Ricci curvature $\mathfrak{R}^{(1)}(\Omega)$ of $(M,g,J)$ represents its first Aeppli-Chern class\footnote{See \cite[Definition 1.1]{LiuYang}. For more details on Aeppli cohomology, see \cite{Angella} and references therein.} $c_{1}^{AC}(M) \in H^{1,1}_{A}(M)$. More precisely, 
\begin{equation}
\label{LCRcurvarture}
\mathfrak{R}^{(1)}(\Omega) = {\rm{Ric}}^{(1)}(\Omega) - \frac{1}{2} \big ( \partial \partial^{\ast}\Omega +  \bar{\partial} \bar{\partial}^{\ast} \Omega\big),
\end{equation}
where ${\rm{Ric}}^{(1)}(\Omega) \myeq -\frac{1}{2}{\rm{d}}{\rm{d}}^{c}\log(\det(\Omega))$ is the associated (first) Chern Ricci curvature. By the celebrated Calabi-Yau theorem \cite{Yau} a compact K\"{a}hler manifold has $c_{1}(M) = 0$ if and only if it has a Ricci-flat K\"{a}hler metric, i.e. ${\rm{Ric}}(g) = 0$. In the compact non-K\"{a}hler setting, since $\mathfrak{R}^{(1)}(\Omega) = 0$ implies $c_{1}^{AC}(M) = 0$, a natural question to ask inspired by the Calabi-Yau theorem is the following:

\begin{problem}[\cite{LiuYang}]
\label{question1}
On a compact complex manifold $M$, if $c_{1}^{AC}(M) = 0$, does there exist a smooth Levi-Civita Ricci-flat Hermitian metric $\Omega$, i.e. such that $\mathfrak{R}^{(1)}(\Omega) = 0$? 
\end{problem}

From Eq. (\ref{LCRcurvarture}), the Levi-Civita Ricci-Flat condition $\mathfrak{R}^{(1)}(\Omega) = 0$ is equivalent to 
\begin{equation}
\label{LCRFEq}
{\rm{Ric}}^{(1)}(\Omega) = \frac{1}{2} \big ( \partial \partial^{\ast}\Omega +  \bar{\partial} \bar{\partial}^{\ast} \Omega\big).
\end{equation}
Since there are non-elliptic terms on the right-hand side of Eq. (\ref{LCRFEq}), it is not of Monge-Amp\`{e}re type. Thus, it is particularly challenging to solve such equations. Since  $c_{1}^{AC}(M) = 0$, if $c_{1}(M) = 0$, it is very natural to study the Problem \ref{question1} on non-K\"{a}hler Calabi-Yau manifolds, i.e. compact non-K\"{a}hler manifolds satisfying $c_{1}(M) = 0$. It is known that the Hopf manifold $M = S^{2n+1} \times S^{1}$ is non-K\"{a}hler Calabi-Yau. Moreover, since the first Bott-Chern class of $M$ does not vanish, i.e. $c_{1}^{BC}(M) \neq 0$, there does not exist a Hermitian metric on $M$, such that ${\rm{Ric}}^{(1)}(\Omega) = 0$. On the other hand, as it was shown in \cite{LiuYang}, every Hopf manifold $M = S^{2n+1} \times S^{1}$ admits a Hermitian metric $\Omega$ satisfying $\mathfrak{R}^{(1)}(\Omega) = 0$. Inspired by this example and by Problem \ref{question1}, in this paper we generalize the construction provided in \cite{LiuYang} on Hopf manifolds to a more general setting of locally conformal K\"{a}hler manifolds obtained as certain suspensions of Sasaki-Einstein manifolds. More precisely, we prove the following:
\begin{thm}
\label{T1}
Let $(Q,g_{SE})$ be a compact Sasaki-Einstein manifold, $\phi \colon Q \to Q$ a Sasaki automorphism, and $c > 0$. Then, the suspension $\Sigma_{\phi,c}(Q)$ by $(\phi,c)$ of $Q$ admits a Levi-Civita Ricci-flat Hermitian metric.
\end{thm}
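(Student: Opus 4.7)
The plan is to construct an explicit Hermitian metric on $\Sigma_{\phi,c}(Q)$ by conformal rescaling of the K\"{a}hler cone of $Q$ and to verify equation~(\ref{LCRFEq}) directly, generalizing the Hopf-manifold construction of \cite{LiuYang}.

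First, I would set up the complex structure via the K\"{a}hler cone. Let $C(Q) = Q\times\mathbb{R}_{>0}$ denote the metric cone with $g_C = dr^2 + r^2 g_{SE}$ and K\"{a}hler form $\omega_C = \tfrac{1}{2}\,d(r^2\eta)$, where $\eta$ is the contact $1$-form of the Sasakian structure. The Sasaki-Einstein assumption on $(Q, g_{SE})$ is equivalent to $(C(Q),\omega_C)$ being Ricci-flat K\"{a}hler, hence admitting a nowhere-vanishing parallel holomorphic volume form of constant pointwise norm. Since $\phi$ is a Sasaki automorphism, the map $\Psi(x,r)\ce(\phi(x), e^c r)$ is a holomorphic dilation of $C(Q)$, and one obtains a biholomorphism $\Sigma_{\phi,c}(Q)\cong C(Q)/\langle\Psi\rangle$ --- recovering $(\mathbb{C}^n\setminus\{0\})/\mathbb{Z}$ in the Hopf case $Q = S^{2n+1}$.

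Second, as candidate metric I would take the conformally rescaled form $\Omega \ce r^{-2}\omega_C$. The identities $\Psi^*\omega_C = e^{2c}\omega_C$ and $\Psi^* r^2 = e^{2c}r^2$ make $\Omega$ invariant under $\langle\Psi\rangle$, so it descends to $\Sigma_{\phi,c}(Q)$ and equips the quotient with a locally conformally K\"{a}hler structure of Lee form $\theta\ce d\log r^2$ (which descends as a closed but non-exact $1$-form since integrating over a loop $(x,r)\mapsto(\phi(x),e^c r)$ in the quotient gives $2c\neq 0$), satisfying $d\Omega = -\theta\wedge\Omega$. For the left-hand side of~(\ref{LCRFEq}), the constancy of the norm of the holomorphic volume form forces $dd^c\log\det\omega_C = 0$ on $C(Q)$, and $\det\Omega = r^{-2(n+1)}\det\omega_C$ then yields
\begin{equation*}
\mathrm{Ric}^{(1)}(\Omega) \,=\, -\tfrac{1}{2}\,dd^c\log\det\Omega \,=\, \tfrac{n+1}{2}\,dd^c\log r^2.
\end{equation*}

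Finally, I would compute the right-hand side of~(\ref{LCRFEq}). Using the LCK identity $d\Omega = -\theta\wedge\Omega$ together with the K\"{a}hler identities on the cone $(C(Q),\omega_C)$, both codifferentials $\partial^*\Omega$ and $\bar\partial^*\Omega$ reduce to explicit multiples of the $(0,1)$- and $(1,0)$-parts of $J\theta$, so the combination $\partial\partial^*\Omega + \bar\partial\bar\partial^*\Omega$ comes out proportional to $dd^c\log r^2$. Matching coefficients then closes~(\ref{LCRFEq}); the numerical constant can be pinned down by specialization to the Hopf case. The main technical obstacle is precisely this computation of the codifferentials: because $\Omega$ is only locally conformally K\"{a}hler, $\partial^*\Omega$ and $\bar\partial^*\Omega$ couple the radial direction with the Reeb direction on $Q$, and one must carefully separate horizontal and vertical components with respect to the Sasakian foliation. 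The Sasaki-Einstein hypothesis is precisely what makes the transverse (Reeb-orthogonal) contributions cancel, reducing the problem to a one-variable identity in $r$ that matches the Hopf computation of \cite{LiuYang} verbatim.
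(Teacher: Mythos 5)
Your setup---the Calabi--Yau cone, the deck transformation $(x,r)\mapsto(\phi(x),\lambda r)$, the candidate metric $\Omega=r^{-2}\omega_{C}$, and the identification of the Lee form with $\pm\,{\rm d}\log r^{2}$---coincides with the paper's starting point, but the last step, ``matching coefficients then closes~(\ref{LCRFEq})'', does not go through, and this is exactly where the content of the theorem lies. Write $n=\dim_{\mathbb{C}}\Sigma_{\phi,c}(Q)$ and normalize the Lee form as in the paper, so that locally $\theta=-{\rm d}\log r^{2}$ and ${\rm d}\Omega=\theta\wedge\Omega$. Ricci-flatness of the cone gives ${\rm Ric}^{(1)}(\Omega)=-\frac{n}{2}\,{\rm d}(J\theta)$, while the torsion term computes to $\partial\partial^{\ast}\Omega+\bar{\partial}\bar{\partial}^{\ast}\Omega=-(n-1)\,{\rm d}(J\theta)$. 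Hence
\begin{equation*}
\mathfrak{R}^{(1)}(\Omega)=\Bigl(-\tfrac{n}{2}+\tfrac{n-1}{2}\Bigr)\,{\rm d}(J\theta)=-\tfrac{1}{2}\,{\rm d}(J\theta)\neq 0,
\end{equation*}
since $-{\rm d}(J\theta)\geq 0$ is (up to normalization) the nonzero transverse K\"ahler form. So the two sides of~(\ref{LCRFEq}) are indeed proportional to the same $(1,1)$-form, but with coefficients $n$ and $n-1$: they never match for the unperturbed l.c.K.\ metric, in any dimension. The same is already true on Hopf manifolds: the Levi-Civita Ricci-flat metric of \cite[Theorem 6.2]{LiuYang} is not the standard metric $r^{-2}\omega_{C}$ but a perturbation of it, so ``pinning down the constant by specializing to the Hopf case'' cannot rescue the argument.

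The missing idea is a one-parameter deformation. The paper sets $\Omega_{\zeta}:=\Omega+2\zeta\,\mathfrak{R}^{(1)}(\Omega)=-(1+\zeta)\,{\rm d}(J\theta)+\theta\wedge J\theta$ for $\zeta>-1$, which is again Vaisman with Lee form $\theta_{\zeta}=\frac{1}{1+\zeta}\theta$. Since $\Omega_{\zeta}^{n}=(1+\zeta)^{n-1}\Omega^{n}$, the Chern--Ricci form is unchanged, ${\rm Ric}^{(1)}(\Omega_{\zeta})=-\frac{n}{2}{\rm d}(J\theta)$, whereas the torsion term rescales to $-\bigl(\frac{n-1}{1+\zeta}\bigr){\rm d}(J\theta)$; therefore $\mathfrak{R}^{(1)}(\Omega_{\zeta})=\bigl(-n+\frac{n-1}{1+\zeta}\bigr){\rm d}(J\theta)$, which vanishes precisely for $\zeta=-\frac{1}{n}$. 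The computations you outline for the two sides of~(\ref{LCRFEq}) are exactly the ones needed to run this argument---but you must deform the metric; verifying the equation for $r^{-2}\omega_{C}$ itself is bound to fail.
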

Let us observe that by a suspension of $Q$ by $(\phi,c)$ we mean
\begin{equation}
\Sigma_{\phi,c}(Q) := \frac{Q \times [0,\log(c)]}{(\phi(x),0) \sim (x,\log(c))}.
\end{equation}
In particular, since $b_{1}(\Sigma_{\phi,c}(Q)) = 1$ (e.g. \cite{Verbitsky}), it follows that $\Sigma_{\phi,c}(Q)$ cannot be K\"{a}hler. From Theorem \ref{T1}, one can recover the Levi-Civita Ricci-flat Hermitian metric on Hopf manifolds provided in \cite{LiuYang} just by taking $Q = S^{2n+1}$ and $\phi = {\rm{id}}$. The main idea to prove Theorem \ref{T1} is the following: Considering the locally conformal K\"{a}hler metric $\Omega$ on $\Sigma_{\phi,c}(Q)$ induced from the Calabi-Yau metric of the metric cone $Q \times \mathbbm{R}_{+}$, e.g. \cite{Gini}, one can write explicitly the Levi-Civita Ricci-Flat condition for the perturbed Hermitian metric 
\begin{equation}
\Omega_{\zeta} := \Omega + 2\zeta\mathfrak{R}^{(1)}(\Omega), 
\end{equation}
such that $\zeta > -1$. Then, we show that one can always find $\zeta$, such that $\mathfrak{R}^{(1)}(\Omega_{\zeta}) = 0$. The key point which allows us to solve Eq. (\ref{LCRFEq}) is that $(\Sigma_{\phi,c}(Q),\Omega)$ is a compact Hermitian Weyl-Einstein manifold. This fact allows us to perform all computations needed using differential forms and to reduce the problem to a simple equation in terms of $\zeta$ just like in the case Hopf manifolds (cf. \cite[Theorem 6.2]{LiuYang}). The framework on Sasaki-Einstein geometry of Theorem \ref{T1} allows us to obtain several examples of Levi-Civita Ricci-flat manifolds. In fact, since every 3-Sasakian manifold is, in particular, Sasaki-Einstein, see for instance \cite{3Einstein}, \cite{Kashiwada2}, we obtain the following result.
\begin{corollarynon}
\label{C1}
Let $(Q,g_{Q})$ be a compact $3$-Sasakian manifold, $\phi \colon Q \to Q$ a Sasaki automorphism, and $c > 0$. Then, the suspension $\Sigma_{\phi,c}(Q)$ by $(\phi,c)$ of $Q$ admits a Levi-Civita Ricci-flat Hermitian metric.
\end{corollarynon}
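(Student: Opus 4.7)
The plan is to derive Corollary \ref{C1} as a direct consequence of Theorem \ref{T1}. The only substantive input needed beyond Theorem \ref{T1} is the classical fact, due to Kashiwada, that the 3-Sasakian condition automatically forces the Einstein property.

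First, given a compact 3-Sasakian manifold $(Q,g_Q)$, I would invoke the result of \cite{3Einstein}, \cite{Kashiwada2} to conclude that each of the three underlying Sasakian structures $(\eta^{a},\xi^{a},\Phi^{a})$, $a = 1,2,3$, is in fact Sasaki-Einstein with respect to the common metric $g_{Q}$. In particular, $(Q,g_{Q})$ qualifies as a compact Sasaki-Einstein manifold in the sense required by Theorem \ref{T1}, so the hypothesis of that theorem is met.

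Second, I would verify that the notion of Sasaki automorphism used in the corollary is compatible with the one used in Theorem \ref{T1}. A Sasaki automorphism $\phi \colon Q \to Q$ in the 3-Sasakian setting preserves the entire 3-Sasakian structure and hence, a fortiori, preserves at least one of the three Sasakian structures; in particular, it is a Sasaki automorphism in the sense required by Theorem \ref{T1}. Fixing such a Sasakian structure on $Q$, the suspension $\Sigma_{\phi,c}(Q)$ constructed from $(Q,\phi,c)$ coincides with the suspension to which Theorem \ref{T1} applies.

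Applying Theorem \ref{T1} to this data then yields a Levi-Civita Ricci-flat Hermitian metric on $\Sigma_{\phi,c}(Q)$, proving the corollary. There is no genuine obstacle in this reduction: the analytic content lies entirely in Theorem \ref{T1}, and the corollary is simply a convenient repackaging that exploits the automatic Einstein property of 3-Sasakian metrics in order to enlarge the supply of examples.
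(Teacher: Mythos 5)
Your proposal is correct and matches the paper's argument exactly: the corollary is obtained by citing the Kashiwada/Boyer--Galicki fact that every $3$-Sasakian manifold is Sasaki-Einstein and then applying Theorem \ref{T1}. The extra remark about compatibility of the notion of Sasaki automorphism is a harmless additional check, not a different route.
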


Also, from \cite{Verbitsky} and Theorem \ref{T1}, we have a positive answer for the question proposed in Problem \ref{question1} in the case that $M$ is a compact Hermitian Weyl-Einstein manifold. In fact, we obtain the following.

\begin{corollarynon}
\label{C2}
Every compact Hermitian Weyl-Einstein manifold admits a Levi-Civita Ricci-flat Hermitian metric. In particular, every compact locally conformal hyperK\"{a}hler manifold admits a Levi-Civita Ricci-flat Hermitian metric.
\end{corollarynon}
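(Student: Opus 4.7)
The plan is to deduce Corollary~\ref{C2} from Theorem~\ref{T1} via the structure theorem for compact Hermitian Weyl-Einstein manifolds established in \cite{Verbitsky}, which realizes any such manifold as a suspension of a compact Sasaki-Einstein manifold by a Sasaki automorphism. Once the identification is made, the assertion is obtained by direct application of Theorem~\ref{T1}.

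First, I would invoke the structure result to assert that every compact Hermitian Weyl-Einstein manifold $(M,g,J)$ is biholomorphically isometric (up to a rescaling of $\Omega$) to a suspension $\Sigma_{\phi,c}(Q)$, where $(Q,g_{SE})$ is a compact Sasaki-Einstein manifold, $\phi\colon Q \to Q$ is a Sasaki automorphism, and $c>0$. The underlying reason is that, via the conformal unwrapping provided by the Lee form, the universal cover of $M$ is a K\"ahler cone $Q\times\mathbbm{R}_{+}$; the Einstein-Weyl condition forces this cone to be Ricci-flat K\"ahler, so by the standard cone correspondence $Q$ inherits a Sasaki-Einstein structure. The deck-transformation group of the covering then descends $M$ to the mapping-torus presentation with parameters $(\phi,c)$ matching the Lee length. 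With this identification in place, Theorem~\ref{T1} directly produces a Hermitian metric $\Omega_{\zeta}$ on $M$ satisfying $\mathfrak{R}^{(1)}(\Omega_{\zeta})=0$.

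For the ``in particular'' statement, I would argue that every compact locally conformal hyperK\"ahler manifold is, a fortiori, Hermitian Weyl-Einstein. Indeed, its universal cover is a hyperK\"ahler cone, i.e.\ the Riemannian cone over a $3$-Sasakian manifold $(Q,g_{Q})$, and every $3$-Sasakian manifold is Sasaki-Einstein by \cite{3Einstein, Kashiwada2}. Consequently $M\cong \Sigma_{\phi,c}(Q)$ for some Sasaki automorphism $\phi$ and some $c>0$, and Corollary~\ref{C1} (equivalently, Theorem~\ref{T1}) yields the desired Levi-Civita Ricci-flat Hermitian metric.

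The main obstacle is the careful verification that the structure theorem from \cite{Verbitsky} genuinely places every compact Hermitian Weyl-Einstein manifold within the scope of Theorem~\ref{T1}: one must confirm that the gluing diffeomorphism $\phi$ is an honest Sasaki automorphism (not merely a contact or CR diffeomorphism), that the Kähler-cone structure on the universal cover is compatible with the Sasaki-Einstein structure on the link, and that the parameter $c$ coincides with the norm of the Lee form (up to normalization). Once these compatibilities are matched with the setup of Theorem~\ref{T1}, the corollary follows at once.
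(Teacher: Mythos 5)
Your proposal is correct and follows essentially the same route as the paper: the paper likewise combines Gauduchon's theorem (Theorem \ref{HEW}), which makes a compact Hermitian Weyl-Einstein manifold Vaisman with $b_{1}=1$, with the structure theorem of \cite{Verbitsky} (Theorem \ref{isovaisman}) to identify it with a suspension $\Sigma_{\phi,c}(Q)$ of a Sasaki-Einstein manifold, and then applies Theorem \ref{mainproof}; the locally conformal hyperK\"ahler case is handled exactly as you do, via $3$-Sasakian links being Sasaki-Einstein.
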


Our next result is an application of Theorem \ref{T1} in the setting of quasi-regular Sasaki-Einstein manifolds provided by links of hypersurfaces singularities. More precisely, from the result provided in \cite[Theorem 1.4]{LiuSanoTasin}, see also \cite[Conjecture 4]{Einsteinsphere}, and Theorem \ref{T1}, we have the following corollary.

\begin{corollarynon}
\label{C3}
Let ${\rm{L}}({\bf{a}}) := Y({\bf{a}}) \cap S^{2n+1}$ be the link of a Brieskorn-Pham singularity
\begin{equation}
Y({\bf{a}}) := \Big (  z_{0}^{a_{0}} + \cdots + z_{n}^{a_{n}} = 0 \Big) \subset \mathbbm{C}^{n+1},
\end{equation}
such that $n \geq 3$. Assume that $a_{0} \leq \cdots \leq a_{n}$. Then ${\rm{L}}({\bf{a}}) \times S^{1}$ admits a Levi-Civita Ricci-flat Hermitian metric if 
\begin{equation}
1 < \sum_{j = 0}^{n}\frac{1}{a_{j}} < 1 + \frac{n}{a_{n}}.
\end{equation}
\end{corollarynon}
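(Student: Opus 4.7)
The strategy is to combine Theorem \ref{T1} with the existence result for Sasaki-Einstein metrics on links of Brieskorn-Pham singularities. Under the arithmetic hypothesis $1 < \sum_{j=0}^{n} 1/a_j < 1 + n/a_n$ together with $a_0 \leq \cdots \leq a_n$ and $n \geq 3$, the theorem of Liu-Sano-Tasin \cite[Theorem 1.4]{LiuSanoTasin} (building on \cite{Einsteinsphere}) produces a quasi-regular Sasaki-Einstein metric $g_{SE}$ on the link $\mathrm{L}(\mathbf{a}) = Y(\mathbf{a}) \cap S^{2n+1}$. This is the only nontrivial input beyond the main theorem of the paper, and I would simply invoke it as a black box.

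With the Sasaki-Einstein structure in hand, the next step is to realize $\mathrm{L}(\mathbf{a}) \times S^1$ as a suspension to which Theorem \ref{T1} directly applies. Choosing $\phi = \mathrm{id}_{\mathrm{L}(\mathbf{a})}$, which is trivially a Sasaki automorphism of $(\mathrm{L}(\mathbf{a}), g_{SE})$, and any constant $c > 0$, one has the tautological diffeomorphism
\begin{equation*}
\Sigma_{\mathrm{id},c}(\mathrm{L}(\mathbf{a})) = \frac{\mathrm{L}(\mathbf{a}) \times [0,\log(c)]}{(x,0) \sim (x,\log(c))} \;\cong\; \mathrm{L}(\mathbf{a}) \times S^{1},
\end{equation*}
where the circle factor on the right carries length $\log(c)$. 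Under this identification, the locally conformal K\"{a}hler structure induced from the Calabi-Yau cone $\mathrm{L}(\mathbf{a}) \times \mathbbm{R}_{+}$ descends in the standard way to a Hermitian structure on $\mathrm{L}(\mathbf{a}) \times S^1$, which is precisely the initial datum that Theorem \ref{T1} operates on.

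Finally, applying Theorem \ref{T1} to the compact Sasaki-Einstein manifold $(\mathrm{L}(\mathbf{a}), g_{SE})$ with $\phi = \mathrm{id}$ and the chosen $c$ yields a Levi-Civita Ricci-flat Hermitian metric on $\Sigma_{\mathrm{id},c}(\mathrm{L}(\mathbf{a}))$, hence on $\mathrm{L}(\mathbf{a}) \times S^1$. There is essentially no obstacle to overcome: the analytic content (solving the non-elliptic equation \eqref{LCRFEq} via the ansatz $\Omega_\zeta = \Omega + 2\zeta\,\mathfrak{R}^{(1)}(\Omega)$ and reducing to a scalar equation in $\zeta$) is already packaged inside Theorem \ref{T1}, and the arithmetic hypotheses of the corollary match verbatim those appearing in \cite[Theorem 1.4]{LiuSanoTasin}. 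The only point requiring any care is the trivial verification that $\phi = \mathrm{id}$ qualifies as a Sasaki automorphism and that the induced Hermitian metric on the trivial mapping torus coincides with the one used in the proof of Theorem \ref{T1}.
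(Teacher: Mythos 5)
Your proposal is correct and follows essentially the same route as the paper: the paper also obtains the corollary by invoking \cite[Theorem 1.4]{LiuSanoTasin} (together with the Lichnerowicz obstruction and \cite[Conjecture 4]{Einsteinsphere}) to produce a quasi-regular Sasaki--Einstein metric on ${\rm{L}}({\bf{a}})$ under the stated arithmetic condition, and then applies Theorem \ref{T1} to the trivial suspension $\Sigma_{{\rm{id}},c}({\rm{L}}({\bf{a}})) \cong {\rm{L}}({\bf{a}}) \times S^{1}$. The only cosmetic difference is that the paper's accompanying example spells out the orbifold $S^{1}$ V-bundle picture over $X^{{\rm{orb}}}({\bf{a}})$, which you treat as part of the black box.
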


In particular, from  Theorem \ref{T1} and \cite{LiuSanoTasin}, one can obtain Levi-Civita Ricci-flat Hermitian metrics on generalized Hopf manifolds \cite{GHM}. In fact, we obtain the following corollary which generalizes \cite[Theorem 1.5]{LiuSanoTasin}.

\begin{corollarynon}
\label{C4}
Let ${\bf{\Sigma}}$ be an odd dimensional homotopy sphere which bounds a parallelizable manifold. Then ${\bf{\Sigma}} \times S^{1}$ admits a Levi-Civita Ricci-flat Hermitian metric.
\end{corollarynon}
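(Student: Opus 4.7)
The plan is to reduce Corollary \ref{C4} to Theorem \ref{T1} by realizing the given homotopy sphere ${\bf{\Sigma}}$ as a Sasaki-Einstein manifold and then taking the trivial suspension. The low-dimensional cases $\dim {\bf{\Sigma}} \in \{1,3\}$ are classical---${\bf{\Sigma}} = S^{1}$ gives the flat torus and ${\bf{\Sigma}} = S^{3}$ gives the standard Hopf manifold already treated in \cite{LiuYang}---so one may assume $\dim {\bf{\Sigma}} = 2n-1 \geq 5$, placing us within the scope of Corollary \ref{C3}.

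Next, I would invoke \cite[Theorem 1.5]{LiuSanoTasin}, which asserts that every odd-dimensional homotopy sphere bounding a parallelizable manifold can be realized as a Brieskorn link ${\rm{L}}({\bf{a}}) = Y({\bf{a}}) \cap S^{2n+1}$ admitting a Sasaki-Einstein metric $g_{SE}$. Concretely, one chooses the exponents ${\bf{a}} = (a_{0}, \ldots, a_{n})$ within the diffeomorphism class of ${\bf{\Sigma}}$ so that the arithmetic condition
\[
1 < \sum_{j=0}^{n}\frac{1}{a_{j}} < 1 + \frac{n}{a_{n}}
\]
is satisfied, which by \cite[Theorem 1.4]{LiuSanoTasin} guarantees the desired Sasaki-Einstein structure on ${\rm{L}}({\bf{a}}) \cong {\bf{\Sigma}}$.

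Finally, I apply Theorem \ref{T1} with $Q := ({\bf{\Sigma}}, g_{SE})$, $\phi := {\rm{id}}_{Q}$, and any fixed $c > 0$. Since the mapping torus of the identity is a product, the suspension reduces to
\[
\Sigma_{{\rm{id}}, c}(Q) = \frac{Q \times [0, \log(c)]}{(x,0) \sim (x, \log(c))} \cong Q \times S^{1} \cong {\bf{\Sigma}} \times S^{1},
\]
so Theorem \ref{T1} endows ${\bf{\Sigma}} \times S^{1}$ with a Levi-Civita Ricci-flat Hermitian metric, which is exactly the claim of Corollary \ref{C4}.

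The only nontrivial ingredient is the existence of a Sasaki-Einstein structure on the relevant Brieskorn links, which is imported wholesale from \cite{LiuSanoTasin}; after this input, the argument is purely formal. Consequently I do not anticipate any genuine obstacle here, as the analytic core of the problem---solving the Levi-Civita Ricci-flat equation via a suitable perturbation of the canonical locally conformal K\"{a}hler metric---has already been handled in the proof of Theorem \ref{T1}.
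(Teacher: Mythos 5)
Your proposal is correct and follows essentially the same route as the paper, which obtains this corollary by combining Theorem \ref{T1} (applied to the trivial suspension $\Sigma_{{\rm{id}},c}(Q) \cong Q \times S^{1}$) with the Liu--Sano--Tasin result that every odd-dimensional homotopy sphere bounding a parallelizable manifold is realized as a Brieskorn link admitting a Sasaki--Einstein metric. Your separate remarks on dimensions $1$ and $3$ are a harmless addition not present in the paper.
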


Given two homotopy spheres ${\bf{\Sigma}}$ and ${\bf{\Sigma}}'$ of dimension $\geq 5$, then we have that ${\bf{\Sigma}} \times S^{1}$ is diffeomorphic to ${\bf{\Sigma}}' \times S^{1}$ if and only if ${\bf{\Sigma}}$ is diffeomorphic to ${\bf{\Sigma}}'$, see for instance \cite[Proposition 3]{GHM}. Therefore, in the setting of Corollary \ref{C4} the underlying differentiable manifolds ${\bf{\Sigma}} \times S^{1}$ are exotic. Additionally, inspired by \cite[Theorem 6.4]{LiuSanoTasin}, we prove the following result.
\begin{thm}
\label{T2}
Let $(Q,g_{SE})$ be a compact Sasaki-Einstein manifold, $\phi \colon Q \to Q$ a Sasaki automorphism, and $c > 0$. Then, the suspension $\Sigma_{\phi,c}(Q)$ by $(\phi,c)$ of $Q$ admits three different Hermitian metrics $\Omega_{i}$, $i = 1,2,3$, satisfying the following properties:
\begin{enumerate}
\item[{\rm{(1)}}] ${\rm{Ric}}^{(1)}(\Omega_{1}) = {\rm{Ric}}^{(1)}(\Omega_{2}) = {\rm{Ric}}^{(1)}(\Omega_{3}) \geq 0$;
\item[{\rm{(2)}}] $\Omega_{1}$ has {\bf{strictly positive}} Riemannian scalar curvature;
\item[{\rm{(3)}}] $\Omega_{2}$ has {\bf{zero}} Riemannian scalar curvature;
\item[{\rm{(4)}}] $\Omega_{3}$ has {\bf{strictly negative}} Riemannian scalar curvature.
\end{enumerate}
In particular, all compact Hermitian manifolds of the previous corollaries admit three different Hermitian metrics satisfying the above properties.
\end{thm}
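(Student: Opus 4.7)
The plan is to exploit the one-parameter family of Hermitian metrics
\[
\Omega_{\zeta} = \Omega + 2\zeta \, \mathfrak{R}^{(1)}(\Omega), \qquad \zeta > -1,
\]
already introduced in the proof of Theorem \ref{T1}, where $\Omega$ is the locally conformal K\"{a}hler Hermitian Weyl-Einstein metric on $\Sigma_{\phi,c}(Q)$ induced from the Calabi-Yau cone structure of $Q \times \mathbbm{R}_{+}$. The three required metrics will be produced as $\Omega_{i} := \Omega_{\zeta_{i}}$, $i = 1,2,3$, for three well-chosen values $\zeta_{1},\zeta_{2},\zeta_{3} \in (-1,+\infty)$.

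The first step is to verify that, along this family, the Chern-Ricci form is preserved and nonnegative. The Weyl-Einstein hypothesis provides an explicit formula for $\mathfrak{R}^{(1)}(\Omega)$ in terms of $\Omega$ and the Lee form, so that $\det(\Omega_{\zeta})$ equals $\det(\Omega)$ multiplied by a positive function whose mixed logarithmic second derivatives cancel in the definition ${\rm{Ric}}^{(1)}(\Omega_{\zeta}) = -\tfrac{1}{2} {\rm{d}}{\rm{d}}^{c}\log\det(\Omega_{\zeta})$. This gives ${\rm{Ric}}^{(1)}(\Omega_{\zeta}) = {\rm{Ric}}^{(1)}(\Omega)$ for every $\zeta > -1$, and the common form is identified with a nonnegative multiple of the pull-back of the transverse K\"{a}hler-Einstein form of $(Q,g_{SE})$; this establishes item (1) simultaneously for all three metrics.

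The second step is to express the Riemannian scalar curvature $s(\Omega_{\zeta})$ as an explicit function of $\zeta$. Using the standard Hermitian identity relating $s(\Omega_{\zeta})$ to the Chern scalar curvature, the codifferential of the Lee form, and the norm squared of the Lee form, and combining this with the locally conformal K\"{a}hler Weyl-Einstein structure and the cone/suspension decomposition, all contributions reduce to a rational function $f(\zeta)$ whose coefficients are determined by $\dim Q$ and the Sasaki-Einstein scalar curvature of $Q$. Examining the behavior of $f$ as $\zeta \to -1^{+}$ and as $\zeta \to +\infty$ shows that $f$ changes sign on $(-1,+\infty)$; by the intermediate value theorem there exist $\zeta_{1}, \zeta_{2}, \zeta_{3}$ with $f(\zeta_{1}) > 0$, $f(\zeta_{2}) = 0$, and $f(\zeta_{3}) < 0$, so the corresponding $\Omega_{i}$ realize items (2), (3), and (4) respectively.

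The main obstacle I anticipate is the explicit bookkeeping required to identify $f(\zeta)$: although the Chern scalar contribution is pinned down by the first step, the Lee-form and torsion terms must be tracked carefully using the suspension coordinates on $Q \times [0,\log c]$ and closed up via the Weyl-Einstein identity. Once $f$ is in hand, the sign analysis is routine, following the pattern of \cite[Theorem 6.4]{LiuSanoTasin}.
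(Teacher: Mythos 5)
Your proposal is correct and follows essentially the same route as the paper: the same perturbation family $\Omega_{\zeta}=\Omega+2\zeta\,\mathfrak{R}^{(1)}(\Omega)$, the same observation that $\Omega_{\zeta}^{n}=(1+\zeta)^{n-1}\Omega^{n}$ forces ${\rm{Ric}}^{(1)}(\Omega_{\zeta})={\rm{Ric}}^{(1)}(\Omega)=-\tfrac{n}{2}{\rm{d}}(J\theta)\geq 0$, and the same reduction of ${\rm{scal}}(g_{\zeta})$ via Eq. (\ref{scalrelation}) to an explicit rational function of $\zeta$, which the paper computes to be $\tfrac{n(n-1)}{(\zeta+1)^{2}}\bigl[\zeta-\tfrac{1-2n}{2n}\bigr]$. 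Your endpoint sign analysis (negative as $\zeta\to-1^{+}$, positive for large $\zeta$) is confirmed by that formula, so the deferred bookkeeping is exactly what the paper's proof supplies and your plan goes through.
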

As it can be seen, Theorem \ref{T2} generalizes some ideas introduced in \cite{LiuSanoTasin}. In particular, it provides a huge class of examples of compact Hermitian manifolds with nonnegative first Chern class which admit Hermitian metrics with strictly negative Riemannian scalar curvature. This fact contrasts with the setting of compact K\"{a}hler manifolds with nonnegative first Chern class. Further, we also make some remarks related to applications of the ideas used in the proof of Theorem \ref{T1} and Theorem \ref{T2} in order to construct explicit solutions of the Chern-Ricci flow (e.g. \cite{Gill}, \cite{TosattiWeinkove}) on compact Hermitian Weyl-Einstein manifolds. More precisely, we prove the following.
\begin{thm}
\label{T3}
Let $(M,g,J)$ be a compact Hermitian Weyl-Einstein manifold, then there exists an explicit solution $g(t)$ of the Chern-Ricci flow on $M$ for $ t \in [0,\frac{2}{n})$, starting at $g$, satisfying the following properties:
\begin{enumerate}
\item ${\rm{Vol}}(M,g(t)) \to 0$ as $t \to \frac{2}{n}$ (i.e. $g(t)$ is finite-time collapsing);
\item $\lim_{t \to \frac{2}{n}}g(t) = h_{T}$, where $h_{T}$ is a nonnegative symmetric tensor on $M$;
\item The Chern scalar curvature of $g(t)$ blows up like $(n-1)/(\frac{2}{n}- t)$;
\item ${\rm{scal}}(g(t)) \to -\infty$ as $t \to \frac{2}{n}$;
\item $\lim_{t \to \frac{2}{n}}d_{GH}\big ((M,d_{t}),(S^{1},d_{S^{1}}) \big ) = 0$,
\end{enumerate}
where $d_{t}$ is the distance induced by $g(t)$ on $M$ and $d_{S^{1}}$ is the distance on the unit circle $S^{1}$ induced by a suitable scalar multiple of the standard Riemannian metric.
\end{thm}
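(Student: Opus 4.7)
The plan is to invoke the structure theorem for compact Hermitian Weyl-Einstein manifolds already used in Corollary \ref{C2}, which identifies $(M,g,J)$ with a suspension $\Sigma_{\phi,c}(Q)$ of some compact Sasaki-Einstein manifold $Q$, and then to solve the Chern-Ricci flow explicitly by an affine ansatz exploiting the vertical/horizontal splitting of the Weyl-Einstein form.

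Lifting $\Omega$ to the Calabi-Yau K\"{a}hler cone $C(Q) = Q \times \mathbbm{R}_{+}$ as $\Omega = r^{-2}\omega_{C}$, and writing $\eta$ for the contact form of $Q$, one obtains the global decomposition $\Omega = V + H$, with $V := d\log r\wedge\eta$ and $H := \tfrac{1}{2}d\eta$. The Ricci-flatness of $\omega_{C}$ combined with the identity $dd^{c}\log r = d\eta$ yields $\mathrm{Ric}^{(1)}(\Omega) = \tfrac{n}{2}H$, a semi-positive $(1,1)$-form pulled back from $Q$. The key observation is that, since $V\wedge V = 0$ and $H^{n} = 0$, the family
\begin{equation*}
\Omega(t) := \Omega - t\,\mathrm{Ric}^{(1)}(\Omega) = V + \bigl(1 - \tfrac{n}{2}t\bigr) H
\end{equation*}
satisfies $\Omega(t)^{n} = n(1 - \tfrac{n}{2}t)^{n-1}\,V\wedge H^{n-1}$, so that $\log\det g_{\Omega(t)} - \log\det g_{\Omega}$ is a spatially constant function of $t$. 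Hence $\mathrm{Ric}^{(1)}(\Omega(t)) = \mathrm{Ric}^{(1)}(\Omega)$ for every $t$, and the Chern-Ricci flow equation $\partial_{t}\Omega(t) = -\mathrm{Ric}^{(1)}(\Omega(t))$ is verified tautologically on the maximal interval $t\in[0,\tfrac{2}{n})$ where $\Omega(t)$ remains Hermitian-positive.

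Properties (1)--(3) are then immediate. The volume formula yields (1); the limit tensor $h_{T}$ in (2) is the nonnegative degenerate Hermitian form $V$; and for (3) the trace identity $\mathrm{scal}_{C}(g(t)) = n\,\mathrm{Ric}^{(1)}(\Omega(t))\wedge\Omega(t)^{n-1}/\Omega(t)^{n}$ combined with the binomial expansion of $\Omega(t)^{n-1}$ isolates the unique surviving term containing one factor of $V$, producing the stated formula $(n-1)/(\tfrac{2}{n}-t)$. For (4) I would invoke a Gauduchon-type identity expressing the Riemannian scalar as twice the Chern scalar minus corrections quadratic in the Hermitian torsion and in the Lee form; in the Weyl-Einstein locally conformal K\"{a}hler setting these corrections scale like $(1-\tfrac{n}{2}t)^{-2}$ with a sign that forces $\mathrm{scal}(g(t))\to -\infty$ and overwhelms the positive Chern blowup. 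For (5), the explicit ansatz shows that the horizontal $Q$-directions shrink like $(1-\tfrac{n}{2}t)^{1/2}$ while the suspension $S^{1}$-direction keeps a length determined by $c$, so standard collapse estimates yield Gromov-Hausdorff convergence to a circle equipped with a suitable scalar multiple of the standard metric.

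The main obstacle is property (4): the Riemannian scalar curvature is not an invariant of $\Omega(t)$ alone and requires tracking the Hermitian torsion and Lee form explicitly, together with sharp sign control as $1 - \tfrac{n}{2}t \to 0^{+}$. Property (5), while geometrically transparent from the explicit form of $g(t)$, also requires a uniform estimate on the collapsing horizontal distance in order to identify the correct length constant on the limit $S^{1}$.
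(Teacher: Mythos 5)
Your overall strategy coincides with the paper's: identify $(M,g,J)$ with a suspension $\Sigma_{\phi,c}(Q)$ of a Sasaki--Einstein manifold, take the affine ansatz $\Omega(t)=\Omega-t\,{\rm{Ric}}^{(1)}(\Omega)$, observe that $\Omega(t)^{n}$ is a $t$-dependent constant multiple of $\Omega^{n}$ so that ${\rm{Ric}}^{(1)}(\Omega(t))={\rm{Ric}}^{(1)}(\Omega)$, and read off (1)--(3). Two points, one minor and one substantive. The minor one is normalization: with $V={\rm{d}}\log r\wedge\eta$ and $H=\tfrac12{\rm{d}}\eta$ one computes ${\rm{Ric}}^{(1)}(\Omega)=n\,{\rm{d}}\eta=2nH$, not $\tfrac n2 H$, so your $\Omega(t)=V+(1-2nt)H$ degenerates at $t=\tfrac1{2n}$. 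The constants in the statement (maximal time $\tfrac2n$, blow-up rate $(n-1)/(\tfrac2n-t)$) require starting from the Vaisman-normalized metric with $\|\theta\|=1$, i.e.\ $\Omega=-{\rm{d}}(J\theta)+\theta\wedge J\theta$, which is $4(V+H)$ in your notation. For (4), your sketch is the right one and is exactly what the paper does: it reuses the computation of ${\rm{scal}}(g_{\zeta})$ from Theorem \ref{T2proof} (via Eq.~(\ref{scalrelation})) with $\zeta=-\tfrac n2 t$, giving ${\rm{scal}}(g(t))=\tfrac{n(n-1)}{(T-t)^{2}}\big[T-\tfrac1{n^{2}}-t\big]$; the torsion term is $O((T-t)^{-2})$ with a negative sign and dominates the $O((T-t)^{-1})$ Chern scalar, as you predicted.

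The substantive gap is in (5). Your argument that ``the horizontal $Q$-directions shrink while the $S^{1}$-direction survives'' is not correct as stated: the limit tensor is $h_{T}=\theta\otimes\theta+J\theta\otimes J\theta$, which has rank \emph{two}, and the direction dual to $J\theta$ (the Reeb direction) is tangent to the fibers of the projection $F_{c}\colon\Sigma_{\phi,c}(Q)\to S^{1}$ and does \emph{not} shrink under $g(t)$. So the fibers of $F_{c}$ do not collapse in the naive sense; only the contact distribution $\mathcal{D}_{S}=\ker(\eta_{S})$ inside each fiber $S$ shrinks. The missing idea is sub-Riemannian: since $\mathcal{D}_{S}$ is a contact, hence bracket-generating, distribution, Chow's theorem guarantees that any two points of a fiber are joined by a path tangent to $\mathcal{D}_{S}$, whose $g(t)$-length is controlled by a factor tending to $0$ as $t\to T$. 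This is what the paper uses to bound the distortion of the correspondence $R(F_{c})=\{(p,F_{c}(p))\}$ by $C(T-t)$ and conclude $d_{GH}\to0$. Without this (or an equivalent device), your collapse estimate for the fiber diameter does not go through.
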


As it can be seen, the results of Theorem \ref{T3} generalize some previous results provided in \cite{TosattiWeinkove}, \cite{GillSmith}, and \cite{TosattiWeinkove2}, for Hopf manifolds $S^{2n+1} \times S^{1}$. The proof which we present for Theorem \ref{T3} takes into account the framework on Sasaki-Einstein geometry which underlies compact Hermitian Weyl-Einstein manifolds. Therefore, the results of Theorem \ref{T3} hold for any suspension of a Sasaki-Einstein manifold as in the previous corollaries and theorems.

\subsection*{Outline of the paper} This paper is organized as follows. In Section \ref{S2}, we present some generalities on Hermitian geometry, Sasaki geometry, and Hermitian Weyl-Einstein geometry. In Section \ref{S3}, we prove Theorem \ref{T1}, Theorem \ref{T2}, and we provide a huge class of examples which illustrate our results. In Appendix \ref{Appendix}, we prove Theorem \ref{T3} and we illustrate the result by means of an explicit example.

\section{Preliminary results}
\label{S2}
In this section, we review some basic definitions and results related to Hermitian geometry \cite{Gauduchonconnection}, \cite{Gauduchon}, \cite{Wells}, \cite{LiuYang}, Calabi-Yau cones \cite{Sparks}, \cite{BoyerGalicki}, and Hermitian Weyl-Einstein geometry \cite{Vaisman}, \cite{Dragomir}, \cite{Gauduchon}, \cite{Gauduchon2}, \cite{Verbitsky}. 

\subsection{Generalities on Hermitian manifolds} Let $(M,g,J)$ be a Hermitian manifold of complex dimension $n$. Denoting by $\Omega:=g(J\otimes \mathbbm{1})$ the associated fundamental $2$-form, consider
\begin{equation}
\label{primitive}
({\rm{d}}\Omega)_{0} := {\rm{d}}\Omega - \frac{1}{n-1}{\rm{L}}(\Lambda({\rm{d}}\Omega)),
\end{equation}
where ${\rm{L}} = \Omega \wedge (-)$ is the Lefschetz operator and $\Lambda$ is its adjoint \cite{Wells}. Since $\Lambda({\rm{d}}\Omega)$ is primitive, it follows that $\Lambda({\rm{L}}(\Lambda({\rm{d}}\Omega))) = (n-1)\Lambda({\rm{d}}\Omega)$, thus $\Lambda(({\rm{d}}\Omega)_{0}) = 0$, i.e., the primitive part of ${\rm{d}}\Omega$ is given by Eq. (\ref{primitive}). In the above setting, we have the following definition.
\begin{definition}
\label{LeeDef}
The Lee $1$-form of a Hermitian manifold $(M,\Omega,J)$ of complex dimension $n$ is defined by
\begin{equation}
\label{Lee}
\theta:= \frac{1}{n-1}\Lambda({\rm{d}}\Omega).
\end{equation}
\end{definition}
\begin{remark}
The Lee form $\theta$ associated to a Hermitian manifold plays an important role in the study of the classification of Hermitian structures, e.g. \cite{GrayHervella}.
\end{remark}
Now we consider the following result.
\begin{lemma}
 Let $(M,\Omega,J)$ be a Hermitian manifold of complex dimension $n$. Then 
\begin{equation}
\label{Lema1}
{\rm{d}}(\Omega^{n-1}) = (n-1)\theta \wedge \Omega^{n-1}.
\end{equation}
\end{lemma}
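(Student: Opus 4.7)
The plan is to expand $d(\Omega^{n-1})$ using the Leibniz rule, then invoke the primitive (Lefschetz) decomposition of $d\Omega$ given in Eq.~(\ref{primitive}), and finally kill the primitive piece by a standard vanishing result from Kähler/Hermitian linear algebra.

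First I would differentiate: since each factor of $\Omega$ is an even-degree form, the graded Leibniz rule gives
\begin{equation*}
d(\Omega^{n-1}) = (n-1)\,\Omega^{n-2}\wedge d\Omega.
\end{equation*}
Next, combining Eq.~(\ref{primitive}) with Definition~\ref{LeeDef} (so that $\Lambda(d\Omega)=(n-1)\theta$), one gets the pointwise decomposition
\begin{equation*}
d\Omega = (d\Omega)_{0} + \Omega\wedge\theta,
\end{equation*}
in which $(d\Omega)_{0}$ is primitive by construction, i.e.\ $\Lambda((d\Omega)_{0})=0$.

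The key step is to eliminate the primitive contribution. Substituting the decomposition above,
\begin{equation*}
d(\Omega^{n-1}) = (n-1)\,\Omega^{n-2}\wedge (d\Omega)_{0} + (n-1)\,\Omega^{n-1}\wedge\theta.
\end{equation*}
Now I would invoke the standard fact that a primitive $k$-form $\beta$ on a Hermitian manifold of complex dimension $n$ (with $k\le n$) satisfies $L^{n-k+1}\beta = 0$, i.e.\ $\Omega^{n-k+1}\wedge\beta = 0$. Applied to the primitive $3$-form $(d\Omega)_0$ this yields $\Omega^{n-2}\wedge (d\Omega)_{0}=0$. For $n=2$ this is automatic (primitive $3$-forms vanish on a $4$-manifold), and for $n\ge 3$ it is Weil's identity from the Lefschetz decomposition on $\Lambda^{3}T^{*}M$.

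Finally, since $\theta$ is a $1$-form and $\Omega^{n-1}$ has even degree $2n-2$, one has $\Omega^{n-1}\wedge\theta = \theta\wedge\Omega^{n-1}$, giving
\begin{equation*}
d(\Omega^{n-1}) = (n-1)\,\theta\wedge\Omega^{n-1},
\end{equation*}
as claimed. The only nontrivial ingredient is the Lefschetz-type vanishing for primitive forms; everything else is bookkeeping with the graded Leibniz rule and the definition of $\theta$.
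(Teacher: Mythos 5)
Your proof is correct and follows essentially the same route as the paper: the paper's own justification (given in the remark after the lemma) is precisely the equivalence $\Lambda(({\rm{d}}\Omega)_{0}) = 0 \iff ({\rm{d}}\Omega)_{0}\wedge \Omega^{n-2} = 0$ from Wells, which is the Lefschetz-type vanishing $L^{n-k+1}\beta=0$ for a primitive $k$-form that you invoke. You have simply written out the bookkeeping (Leibniz rule and the decomposition ${\rm{d}}\Omega = ({\rm{d}}\Omega)_{0} + \theta\wedge\Omega$) that the paper leaves implicit.
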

\begin{remark}
The above lemma follows directly from the fact that
\begin{equation}
\Lambda(({\rm{d}}\Omega)_{0}) = 0 \iff ({\rm{d}}\Omega)_{0}\wedge \Omega^{n-2} = 0,
\end{equation}
see for instance \cite[Corollary 3.13]{Wells}.
\end{remark}
Let $\nabla^{Ch}$ be the Chern connection of a Hermitian manifold $(M,g,J)$, and let $\mathcal{T}_{\nabla}$ be its torsion. Considering the $1$-form\footnote{Notice that ${\rm{tr}}(\mathcal{\mathcal{T}}_{\nabla}) = \Lambda(\rm{d}\Omega)$, see for instance \cite{Gauduchonconnection}, \cite{Gauduchon}.} 
\begin{equation}
{\rm{tr}}(\mathcal{\mathcal{T}}_{\nabla})(X) := {\text{trace}}\big (Y \mapsto \mathcal{\mathcal{T}}_{\nabla}(X,Y) \big ), \ \ \ \forall X \in \mathfrak{X}(M),
\end{equation}
we define the torsion $(1,0)$-form associated to $\nabla^{Ch}$ as being
\begin{equation}
\tau := {\rm{tr}}(\mathcal{T}_{\nabla})^{1,0} = \Lambda(\partial \Omega).
\end{equation}
From Definition \ref{LeeDef}, it follows that $\theta = \frac{1}{n-1}(\tau + \overline{\tau})$. By considering the Hodge $\ast$-operator defined by $g$ and the associated codifferential $\delta := - \ast {\rm{d}} \ast $, from the decomposition ${\rm{d}}= \partial + \bar{\partial}$, we have $\delta := \partial^{\ast} + \bar{\partial}^{\ast}$, such that 
\begin{equation}
\partial^{\ast} := - \ast \bar{\partial} \ast, \ \ \ \bar{\partial}^{\ast}:= - \ast  \partial \ast.
\end{equation}
In the above context, since $\ast \Omega^{k} = \frac{k!}{(n-1)!}\Omega^{n-k}$, we have from Lemma \ref{Lema1} that
\begin{center}
$\displaystyle{\delta \Omega =  -\frac{1}{(n-1)!}\ast{\rm{d}}(\Omega^{n-1}) = -\frac{1}{(n-2)!}}\ast \big (\theta \wedge \Omega^{n-1}\big) = - \frac{1}{(n-2)!}\ast {\rm{L}}^{n-1}(\theta).$
\end{center}
Considering $J(\theta) = - \theta \circ J = \frac{\sqrt{-1}}{n-1}(\overline{\tau} - \tau)$, since $\Lambda(\theta) = 0$, it follows that $\ast {\rm{L}}^{n-1}(\theta) = (n-1)!J(\theta)$, e.g. \cite[Theorem 3.16]{Wells}. Therefore, we conclude that 
\begin{equation}
\theta = \frac{1}{n-1}J(\delta \Omega).
\end{equation}
From above, we obtain the following description for the torsion $(1,0)$-form associated to the Chern connection $\nabla^{Ch}$.

\begin{lemma}
Let $(M,\Omega,J)$ be a compact Hermitian manifold and $\tau$ the torsion $(1,0)$-form of the associated to the Chern connection $\nabla^{Ch}$. Then
\begin{equation}
\tau = - \sqrt{-1} \bar{\partial}^{\ast} \Omega.
\end{equation}
\end{lemma}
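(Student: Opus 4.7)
The plan is to extract the identity directly from the computation of the Lee form already carried out in the excerpt. The text has just established two parallel expressions for $\theta$: on one hand $\theta = \frac{1}{n-1}(\tau + \bar{\tau})$, coming from the definition $\tau = \operatorname{tr}(\mathcal{T}_{\nabla})^{1,0}$, and on the other hand $\theta = \frac{1}{n-1}J(\delta \Omega)$, coming from Lemma \ref{Lema1} and the Hodge identity $\ast L^{n-1}(\theta) = (n-1)!\, J(\theta)$. So as a first step I would combine these two expressions into
\[
\tau + \bar{\tau} \;=\; J(\delta \Omega) \;=\; J(\partial^{\ast}\Omega) + J(\bar{\partial}^{\ast}\Omega).
\]

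The next step is a type decomposition of the right-hand side. Since $\Omega$ is a real $(1,1)$-form and the operators send bidegrees as $\partial^{\ast} \colon \Lambda^{p,q} \to \Lambda^{p-1,q}$ and $\bar{\partial}^{\ast}\colon \Lambda^{p,q} \to \Lambda^{p,q-1}$, one has $\partial^{\ast}\Omega \in \Lambda^{0,1}$ and $\bar{\partial}^{\ast}\Omega \in \Lambda^{1,0}$, with the reality relation $\overline{\bar{\partial}^{\ast}\Omega} = \partial^{\ast}\Omega$. The convention $J(\alpha)(X) = -\alpha(JX)$ then gives $J = -\sqrt{-1}$ on $\Lambda^{1,0}$ and $J = +\sqrt{-1}$ on $\Lambda^{0,1}$, so
\[
J(\delta\Omega) \;=\; \sqrt{-1}\,\partial^{\ast}\Omega \;-\; \sqrt{-1}\,\bar{\partial}^{\ast}\Omega,
\]
which is itself the $(0,1) \oplus (1,0)$ decomposition of the right-hand side.

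To conclude, I would compare bidegrees in $\tau + \bar{\tau} = J(\delta\Omega)$: since $\tau$ is of type $(1,0)$ by definition, the $(1,0)$-component of the left-hand side is precisely $\tau$, while the $(1,0)$-component of the right-hand side is $-\sqrt{-1}\,\bar{\partial}^{\ast}\Omega$. This yields $\tau = -\sqrt{-1}\,\bar{\partial}^{\ast}\Omega$, and the complex conjugate identity for $\bar{\tau}$ follows automatically from reality of $\Omega$.

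I do not anticipate any real obstacle: all the analytic content has already been spent in the derivation of $\delta \Omega = -\frac{1}{(n-2)!}\ast L^{n-1}(\theta)$ and in the subsequent identity $\theta = \frac{1}{n-1}J(\delta \Omega)$. The only potential pitfall is the sign bookkeeping for the action of $J$ on $(1,0)$- versus $(0,1)$-forms, which must be pinned down carefully before invoking the type decomposition in the last step.
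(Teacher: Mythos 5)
Your argument is correct and is precisely the derivation the paper intends: it combines the two expressions $\theta=\tfrac{1}{n-1}(\tau+\bar\tau)$ and $\theta=\tfrac{1}{n-1}J(\delta\Omega)$ established just before the lemma, and then reads off the $(1,0)$-component using the sign conventions for $J$ on $(1,0)$- versus $(0,1)$-forms (which you have pinned down consistently with the paper's formula $J\theta=\tfrac{\sqrt{-1}}{n-1}(\bar\tau-\tau)$). No gaps; this matches the paper's (implicit) proof.
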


\subsection{Levi-Civita Ricci curvature} Given a Hermitian manifold $(M,\Omega,J)$, consider its first Chern Ricci curvature  
\begin{equation}
{\rm{Ric}}^{(1)}(\Omega) \myeq -\frac{1}{2}{\rm{d}}{\rm{d}}^{c}\log(\det(\Omega)),
\end{equation}
such that ${\rm{d}}^{c} = J\circ {\rm{d}}$. It is well-known that ${\rm{Ric}}^{(1)}(\Omega) \in 2\pi c_{1}(M)$ and, if ${\rm{d}}\Omega = 0$, it follows that 
\begin{equation}
{\rm{Ric}}^{(1)}(\Omega) = {\rm{Ric}}(g)(J \otimes \mathbbm{1}),
\end{equation}
where ${\rm{Ric}}(g)$ is the Ricci tensor associated to Riemannian metric $g = \Omega(\mathbbm{1} \otimes J)$. In the case that ${\rm{d}}\Omega \neq 0$, we have several different types of Ricci curvatures, e.g. \cite{LiuYang}. Besides the first Chern Ricci curvature, we also shall consider in this paper the following notion of Ricci curvature.

\begin{definition}[Theorem 1.12, \cite{LiuYang}]
\label{LCRicci}
Let $(M,\Omega,J)$ be a compact Hermitian manifold. The first Levi-Civita Ricci curvature of $(M,\Omega,J)$ is defined by
\begin{equation}
\mathfrak{R}^{(1)}(\Omega) := {\rm{Ric}}^{(1)}(\Omega) - \frac{1}{2} \big ( \partial \partial^{\ast}\Omega +  \bar{\partial} \bar{\partial}^{\ast} \Omega\big).
\end{equation}
\end{definition}

Given a compact Hermitian manifold $(M,\Omega,J)$, we have the Bott-Chern cohomology and the Aeppli
cohomology of $(M,\Omega,J)$ defined, respectively, by
\begin{center}
$\displaystyle{H^{p,q}_{BC}(M) := \frac{\ker({\rm{d}}) \cap \Omega^{p,q}(M)}{\im(\partial \bar{\partial}) \cap \Omega^{p,q}(M)}, \ \ H^{p,q}_{A}(M) := \frac{\ker(\partial \bar{\partial}) \cap \Omega^{p,q}(M)}{\im(\partial) \cap \Omega^{p,q}(M) + \im(\bar{\partial}) \cap \Omega^{p,q}(M)}}$.
\end{center}
For more details, see for instance \cite{Angella} and references therein.
\begin{definition}[Definition 1.1, \cite{LiuYang}]
Let ${\bf{L}} \to M$ be a holomorphic line bundle over $M$. The first Aeppli-Chern class of ${\bf{L}}$ is defined by 
\begin{equation}
c_{1}^{AC}({\bf{L}}):=  \bigg [ \frac{\sqrt{-1}}{2\pi}{\bf{\Theta}}(\nabla) \bigg]_{A} \in H^{1,1}_{A}(M),
\end{equation}
where ${\bf{\Theta}}(\nabla)$ is the curvature of the Chern connection $\nabla = {\rm{d}} + {\rm{d}}\log({\bf{H}})$, for some Hermitian metric ${\bf{H}}$ on ${\bf{L}}$. In particular, the first Aeppli-Chern class of $M$ is defined by $c_{1}^{AC}(M) := c_{1}^{AC}({\bf{K}}_{M}^{-1})$.
\end{definition}
\begin{remark}
In the setting of the above definition, similarly, we define the first Bott-Chern class of ${\bf{L}} \to M$ by 
\begin{equation}
c_{1}^{BC}({\bf{L}}):=  \bigg [ \frac{\sqrt{-1}}{2\pi}{\bf{\Theta}}(\nabla)\bigg]_{BC} \in H^{1,1}_{BC}(M).
\end{equation}
The first Bott-Chern class of $M$ is defined by $c_{1}^{BC}(M) := c_{1}^{BC}({\bf{K}}_{M}^{-1})$.
\end{remark}
From Definition \ref{LCRicci}, given a compact Hermitian manifold $(M,\Omega,J)$, it follows that 
\begin{center}
$\displaystyle c_{1}^{AC}(M) = \bigg [\frac{{\rm{Ric}}^{(1)}(\Omega)}{2\pi} \bigg]_{A} = \bigg [ \frac{\mathfrak{R}^{(1)}(\Omega)}{2\pi}\bigg]_{A}$.
\end{center}
The vanishing of the cohomology classes described above are related by the following result.
\begin{proposition}[Corollary 1.3, \cite{LiuYang}]
Let $M$ be a complex manifold. Then
\begin{equation}
c_{1}^{BC}(M) = 0 \Longrightarrow c_{1}(M) = 0 \Longrightarrow c_{1}^{AC}(M) = 0.
\end{equation}
Moreover, on a complex manifold satisfying the $\partial \bar{\partial}$-lemma, we have
\begin{equation}
c_{1}^{BC}(M) = 0 \iff c_{1}(M) = 0 \iff c_{1}^{AC}(M) = 0.
\end{equation}
\end{proposition}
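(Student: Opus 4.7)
The plan is to represent all three first Chern classes by a single closed real $(1,1)$-form, then translate between their vanishings by playing bidegree against the different notions of exactness, invoking the $\partial\bar{\partial}$-lemma only for the reverse implications in the second assertion.

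First I fix a Hermitian metric $\mathbf{H}$ on the anticanonical bundle $\mathbf{K}_M^{-1}$ and set $\omega := \frac{\sqrt{-1}}{2\pi}\mathbf{\Theta}(\nabla)$, where $\nabla$ is the associated Chern connection. By construction $\omega$ is a $d$-closed real $(1,1)$-form whose classes in $H^{1,1}_{BC}(M)$, $H^{2}_{dR}(M,\mathbbm{R})$, and $H^{1,1}_{A}(M)$ are precisely $c_{1}^{BC}(M)$, $c_{1}(M)$, and $c_{1}^{AC}(M)$ respectively, so it suffices to compare the vanishings of the three classes at the level of $\omega$.

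For the forward chain, if $c_{1}^{BC}(M) = 0$ then $\omega = \partial\bar{\partial} f$ for some smooth function $f$, and writing $\omega = d(\bar{\partial} f)$ shows $c_{1}(M) = 0$ in $H^{2}_{dR}(M,\mathbbm{R})$. If $c_{1}(M) = 0$ then $\omega = d\eta$ for some $1$-form $\eta$; decomposing $\eta = \eta^{1,0} + \eta^{0,1}$ and equating $(1,1)$-parts (using that $\omega$ has pure type) yields
\begin{equation*}
\omega = \bar{\partial}\eta^{1,0} + \partial\eta^{0,1} \in \im(\partial) + \im(\bar{\partial}).
\end{equation*}
Since $\omega$ is automatically $\partial\bar{\partial}$-closed, this is exactly $c_{1}^{AC}(M) = 0$.

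For the reverse implications under the $\partial\bar{\partial}$-lemma, I would suppose $c_{1}^{AC}(M) = 0$, so that $\omega = \partial\alpha + \bar{\partial}\beta$ with $\alpha \in \Omega^{0,1}(M)$ and $\beta \in \Omega^{1,0}(M)$. The conditions $\partial\omega = 0$ and $\bar{\partial}\omega = 0$ (which follow from $d\omega = 0$ and the pure type of $\omega$) force $\partial\bar{\partial}\beta = 0$ and $\partial\bar{\partial}\alpha = 0$. Hence $\bar{\partial}\beta$ is a $d$-closed, $\bar{\partial}$-exact $(1,1)$-form and $\partial\alpha$ is a $d$-closed, $\partial$-exact $(1,1)$-form, so the $\partial\bar{\partial}$-lemma produces functions $g_{1}, g_{2}$ with $\bar{\partial}\beta = \partial\bar{\partial} g_{1}$ and $\partial\alpha = \partial\bar{\partial} g_{2}$; therefore $\omega = \partial\bar{\partial}(g_{1} + g_{2})$, which gives $c_{1}^{BC}(M) = 0$. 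I expect no genuine obstacle here: the argument reduces to bidegree bookkeeping together with one application of the $\partial\bar{\partial}$-lemma, the only subtle point being the verification that $\bar{\partial}\beta$ and $\partial\alpha$ are each individually $d$-closed before invoking the lemma.
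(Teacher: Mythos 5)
Your proof is correct and complete: the forward chain is exactly the statement that the identity on representatives induces maps $H^{1,1}_{BC}(M)\to H^{2}_{dR}(M,\mathbbm{R})\to H^{1,1}_{A}(M)$, and your reverse step correctly verifies that $\partial\alpha$ and $\bar{\partial}\beta$ are individually $d$-closed before applying the $\partial\bar{\partial}$-lemma to each, which closes the cycle of implications. Note that the paper gives no proof of this proposition at all --- it is imported verbatim as Corollary 1.3 of Liu--Yang --- so there is nothing to compare against; your argument is the standard one and would serve as a self-contained justification of the cited result.
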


\subsection{Scalar curvatures} Given a Hermitian manifold $(M,g,J)$, we can define its Chern scalar curvature as $s_{C}(g):= {\rm{tr}}_{\Omega}({\rm{Ric}}^{(1)}(\Omega))$, such that 
\begin{equation}
\frac{{\rm{tr}}_{\Omega}({\rm{Ric}}^{(1)}(\Omega))}{n}\Omega^{n} = {\rm{Ric}}^{(1)}(\Omega) \wedge \Omega^{n-1}.
\end{equation}
By considering the underlying Riemannian structure of $(M,g,J)$, we also have the notion of Riemannian scalar curvature ${\rm{scal}}(g):= {\rm{tr}}_{g}({\rm{Ric}}(g))$. In the particular case that ${\rm{d}}\Omega = 0$, i.e. when $(M,g,J)$ is K\"{a}hler, these two notions of scalar curvature are related by
\begin{equation}
{\rm{scal}}(g) = 2s_{C}(g).
\end{equation}
It is worth pointing out that the converse of the above fact is not true in general, see for instance \cite{Dabkowski}. However, if $M$ is compact, then we have that ${\rm{scal}}(g) = 2s_{C}(g)$ if and only if ${\rm{d}\Omega = 0}$. Given a compact Hermitian manifold $(M,g,J)$, it will be useful for us to consider the following characterization of ${\rm{scal}}(g)$: 
\begin{equation}
\label{scalrelation}
{\rm{scal}}(g) = 2s_{C}(g) + \Big (\big \langle \partial \partial^{\ast}\Omega +  \bar{\partial} \bar{\partial}^{\ast}\Omega, \Omega \big \rangle - 2|\partial^{\ast}\Omega|^{2}\Big) - \frac{1}{2}|\mathcal{T}_{\nabla}|^{2},
\end{equation}
where\footnote{ $\langle \alpha, \beta \rangle \frac{\Omega^{n}}{n!} = \alpha \wedge \ast \bar{\beta}$, $\forall \alpha, \beta \in \Omega^{p,q}(M)$.} $|\mathcal{T}_{\nabla}|^{2} =  \big \langle \sqrt{-1}\Lambda(\partial \bar{\partial}\Omega_{\zeta}),\Omega_{\zeta} \big \rangle + \big \langle \partial \partial^{\ast}\Omega_{\zeta} +  \bar{\partial} \bar{\partial}^{\ast}\Omega_{\zeta}, \Omega_{\zeta} \big \rangle$, see \cite[Corollary 4.2]{LiuYang}.

\subsection{Calabi-Yau cones} Regarding $r \in (0,+\infty)$ as a coordinate on the positive real line $\mathbbm{R}_{+}$, we consider the following definition.

\begin{definition}
A Riemannian manifold $(Q,g_{Q})$ is Sasakian if and only if its metric cone $(\mathcal{C}(Q):= Q \times \mathbbm{R}_{+},g_{\mathcal{C}} := r^{2}g + {\rm{d}}r\otimes {\rm{d}}r)$ is a K\"{a}hler cone.
\end{definition}

Given a Sasakian manifold $(Q,g)$, it follows that $\dim_{\mathbbm{R}}(Q) = 2n+1$. Denoting by $\mathcal{J}$ the associated complex structure on $(\mathcal{C}(Q),g_{\mathcal{C}})$, we have the following facts:
\begin{enumerate}
\item the vector field $r \partial_{r}$ is real holomorphic, i.e. $\mathscr{L}_{r \partial_{r}}\mathcal{J} = 0;$
\item the (Reeb) vector field $\xi = \mathcal{J}(r\partial_{r})$ is real holomorphic and Killing, i.e.
\begin{center}
$\mathscr{L}_{\xi}\mathcal{J} = 0$ \ \ and \ \ $\mathscr{L}_{\xi}g_{\mathcal{C}} = 0;$ 
\end{center}
\item the $1$-form $\eta = {\rm{d}}^{c}\log(r)$, such that ${\rm{d}}^{c} = \mathcal{J} \circ {\rm{d}}$, satisfies the following 
\begin{center}
$\eta(\xi) = 1$ \ \ and \ \ $\xi \lrcorner {\rm{d}}\eta = 0$.
\end{center}
\end{enumerate}
From above, one can describe the K\"{a}hler form $\omega_{\mathcal{C}} = g_{\mathcal{C}}(\mathcal{J} \otimes \mathbbm{1})$ as follows
\begin{equation}
\omega_{\mathcal{C}} = \frac{1}{4}{\rm{d}}{\rm{d}}^{c}r^{2} = {\rm{d}}\Big ( \frac{r^{2}\eta}{2}\Big ).
\end{equation}
By considering the natural inclusion $Q \hookrightarrow \mathcal{C}(Q)$, such that $Q = \{r = 1\}$, we can consider $\eta$ and $\xi$ as tensor fields on $Q$. From this, since $\omega_{\mathcal{C}}$ is a symplectic structure on $\mathcal{C}(Q)$, it follows that $({\rm{d}}\eta)^{n} \wedge \eta \neq 0$ on $Q$. Therefore, we have that $(Q,\eta,\xi)$ is a contact manifold. Further, denoting $\mathcal{D} = \ker(\eta)$ and $\mathcal{F}_{\xi} = \big \langle \xi \big \rangle$, we can define $\Phi \in {\rm{End}}(TQ)$, such that $\Phi =  \mathcal{J}|_{\mathcal{D}}$ and $\Phi|_{\mathcal{F}_{\xi}} = 0$. From this, we have
\begin{enumerate}
\item $\mathcal{J}^{2} = - \mathbbm{1} \Longrightarrow \Phi^{2} = -\mathbbm{1} + \eta \otimes \xi$;
\item $g_{\mathcal{C}}(\mathcal{J} \otimes \mathbbm{1}) = - g_{\mathcal{C}}(\mathbbm{1} \otimes \mathcal{J}) \Longrightarrow g_{Q}(\Phi \otimes \Phi) = g_{Q} - \eta \otimes \eta$;
\item $[\mathcal{J},\mathcal{J}] = 0 \Longrightarrow [\Phi,\Phi] + {\rm{d}}\eta \otimes \xi = 0.$
\end{enumerate}
By using the above relations one can show that
\begin{equation}
g = \frac{1}{2}{\rm{d}}\eta(\mathbbm{1} \otimes \Phi) + \eta \otimes \eta.
\end{equation}
Moreover, we have that $g|_{\mathcal{D}} = \frac{1}{2}{\rm{d}}\eta(\mathbbm{1} \otimes \Phi)$ defines a Hermitian metric on $\mathcal{D}$. From this, denoting $g|_{\mathcal{D}} = g^{T}$, we have a (transverse) K\"{a}hler foliation $(g^{T},\mathcal{J}|_{\mathcal{D}},\mathcal{D})$ on $Q$. 
\begin{remark}
In the above setting, we shall refer to $\mathcal{S} := (\xi,\eta, \Phi,g)$ as being a Sasakian structure on $Q$. 
\end{remark}

\begin{remark}
\label{regularity}
Denoting also by $\mathcal{F}_{\xi}$ the Reeb foliation on $Q$ defined by $\xi$, unless otherwise stated, we will assume that all orbits of $\xi$ are all compact. Under this assumption, $\xi$ integrates to an isometric ${\rm{U}}(1)$ action on $(Q,g)$. We call the Sasakian structure $\mathcal{S}$ {\textit{quasi-regular}} if the ${\rm{U}}(1)$ action is locally free. If the ${\rm{U}}(1)$ action is free, we call the Sasakian structure $\mathcal{S}$ {\textit{regular}}. In the regular or quasi-regular case, the leaf space $X:= Q/\mathcal{F}_{\xi}$ has the structure of a manifold or orbifold, respectively. In both cases the transverse K\"{a}hler structure $(g^{T},\mathcal{J}|_{\mathcal{D}},\mathcal{D})$ pushes down to a K\"{a}hler structure on $X$. Moreover, we have the following facts:
\begin{enumerate}
\item $Q$ is the total space of a principal ${\rm{U}}(1)$-(orbi)bundle over $X$;
\item $\pi \colon Q \to X= Q/\mathcal{F}_{\xi}$ is a  (an orbifold) Riemannian submersion; 
\item ${\rm{d}}\eta = \pi^{\ast}\omega_{X}$, where $\omega_{X}$ is a non-trivial integral (orbifold) cohomology class; 
\item $(X,\omega_{X})$ is a K\"{a}hler (orbifold) manifold.
\end{enumerate}
\end{remark}

\begin{proposition}
Let $(Q,g)$ be a Sasakian manifold of dimension $2n+1$. Then the following are equivalent
\begin{enumerate}
\item $(Q,g)$ is Sasaki-Einstein with ${\rm{Ric}}(g) = 2ng$;
\item The K\"{a}hler cone $({\mathcal{C}}(Q),g_{\mathcal{C}})$ is Ricci-flat, ${\rm{Ric}}(g_{\mathcal{C}}) = 0$;
\item The transverse K\"{a}hler structure to the Reeb foliation $\mathcal{F}_{\xi}$ is K\"{a}hler-Einstein with ${\rm{Ric}}(g^{T}) = 2(n+1)g^{T}$.
\end{enumerate}
\end{proposition}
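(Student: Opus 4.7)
The plan is to establish $(1)\iff(2)$ and $(1)\iff(3)$ separately, each time by deriving an explicit algebraic relation between the two Ricci tensors from the underlying geometric structure (warped product for the cone, Riemannian foliation for $\mathcal{F}_\xi$). Throughout, I would exploit the identities recorded in the preliminaries: $\xi$ is a unit Killing field with geodesic orbits, $\nabla^{g}_X\xi = -\Phi X$ on $Q$, and the transverse K\"{a}hler datum $(g^T,\Phi|_\mathcal{D})$ agrees with the restriction of $(g,\Phi)$ to $\mathcal{D}=\ker\eta$.

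For $(1)\iff(2)$, I would compute $\nabla^\mathcal{C}$ from $\nabla^g$ via the Koszul formula applied to the warped metric $g_\mathcal{C}= dr\otimes dr+r^2 g$, obtaining
\[
\nabla^\mathcal{C}_{\partial_r}\partial_r = 0,\qquad \nabla^\mathcal{C}_{\partial_r}X = \nabla^\mathcal{C}_X\partial_r = \tfrac{1}{r}X,\qquad \nabla^\mathcal{C}_X Y = \nabla^g_X Y - r\,g(X,Y)\,\partial_r
\]
for $X,Y\in TQ$. Tracing the resulting Riemann tensor then yields the classical cone formulas $\mathrm{Ric}(g_\mathcal{C})(\partial_r,\cdot)=0$ and $\mathrm{Ric}(g_\mathcal{C})(X,Y)=\mathrm{Ric}(g)(X,Y)-2n\,g(X,Y)$ for $X,Y\in TQ$, from which $(1)\iff(2)$ is immediate.

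For $(1)\iff(3)$, I would use that $\xi$ is a unit Killing field with totally geodesic orbits, so $\mathcal{F}_\xi$ is a transversely Riemannian (in fact transversely K\"{a}hler) foliation and $\pi_*\colon \mathcal{D}\to T(Q/\mathcal{F}_\xi)$ is a fiberwise isometry onto the transverse tangent bundle. Since $\nabla^g_X\xi = -\Phi X$ encodes the O'Neill $A$-tensor in terms of $\Phi$, a standard O'Neill-type computation (using that $\Phi|_\mathcal{D}$ is a $g^T$-isometry) yields the Sasakian Ricci identities
\[
\mathrm{Ric}(g)(\xi,\xi)=2n,\qquad \mathrm{Ric}(g)(\xi,X)=0,\qquad \mathrm{Ric}(g)(X,Y)=\mathrm{Ric}(g^T)(X,Y)-2\,g^T(X,Y)
\]
for $X,Y\in\mathcal{D}$. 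These three identities combine to give $\mathrm{Ric}(g)=2n\,g\iff \mathrm{Ric}(g^T)=2(n+1)\,g^T$, proving $(1)\iff(3)$.

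The main obstacle I expect is bookkeeping in the O'Neill step: one must verify the sign and the factor of $2$ in the correction $-2\,g^T$ under the normalization $g|_\mathcal{D}=\tfrac{1}{2}d\eta(\,\cdot\,,\Phi\,\cdot\,)$ adopted in the preliminaries, and check that the $2n$ in $\mathrm{Ric}(g)(\xi,\xi)$ emerges correctly from the horizontal curvature contribution (ultimately $-\mathrm{tr}_\mathcal{D}(\Phi^2)=2n$). Once those two numerical factors are pinned down, the three equivalences become algebraic consequences of the identities above.
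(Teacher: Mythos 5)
Your proposal is correct and follows exactly the route the paper has in mind: the paper states this proposition without proof (citing standard references on Sasaki--Einstein geometry) and immediately records the pointwise identities $\mathrm{Ric}(g_{\mathcal{C}})=\mathrm{Ric}(g)-2ng=\mathrm{Ric}(g^{T})-2(n+1)g^{T}$, which are precisely the cone and transverse Ricci relations you derive via the warped-product connection and the O'Neill-type computation. Your sign and normalization checks (the factor $-2n$ from the cone over a $(2n+1)$-manifold, $\mathrm{Ric}(g)(\xi,\xi)=2n$ from $|\nabla\xi|^{2}=-\mathrm{tr}_{\mathcal{D}}(\Phi^{2})$, and the correction $-2g^{T}$) all come out consistent with the paper's conventions, so the equivalences follow as you describe.
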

By a slight abuse of notation, we have the following
\begin{equation}
{\rm{Ric}}(g_{\mathcal{C}}) = {\rm{Ric}}(g) - 2ng = {\rm{Ric}}(g^{T}) - 2(n+1)g^{T}.
\end{equation}
It follows from Remark \ref{regularity} that every (quasi-)regular Sasaki-Einstein manifold is a principal ${\rm{U}}(1)$-(orbi)bundle over a K\"{a}hler-Einstein (orbifold) manifold $(X,\omega_{X})$. One can also obtain Sasaki-Einstein manifolds from  K\"{a}hler-Einstein (orbifold) manifolds, we shall explore later this construction in the regular case (Example \ref{fundexample}).

\subsection{Hermitian Weyl-Einstein manifolds} Let $(M,g,J)$ be a connected complex Hermitian manifold, such that $\dim_{\mathbbm{C}}(M) \geq 2$.

\begin{definition}
\label{DEFLCK}
A Hermitian manifold $(M,g,J)$ is called locally conformally K\"{a}hler (l.c.K.) if it satisfies one of the following equivalent conditions: 

\begin{enumerate}

\item There exists an open cover $\mathscr{U}$ of $M$ and a family of smooth functions $f_{U} \colon U \to \mathbbm{R}$, $U \in \mathscr{U}$, such that $g_{U} := {\mathrm{e}}^{-f_{U}}g|_{U}$, is K\"{a}hlerian, $\forall U \in \mathscr{U}$.

\item There exists a globally defined closed $1$-form $\theta \in \Omega^{1}(M)$, such that 
\begin{equation}
\label{LCKform}
{\rm{d}}\Omega = \theta \wedge \Omega.
\end{equation}
\end{enumerate}
\end{definition}

\begin{remark}
In the setting above, it follows from Definition \ref{LeeDef} that $\theta$ satisfying Eq. (\ref{LCKform}) is the Lee form associated to the Hermitian structure of a l.c.K. manifold $(M,g,J)$. Throughout this paper, unless otherwise stated, we shall assume for all l.c.K. manifold $(M,g,J)$ that $\theta$ is not exact and $\theta \not \equiv 0$.
\end{remark}

An important subclass of l.c.K. manifolds is defined by the parallelism of the Lee form with respect to the Levi-Civita connection of $g$. Being more precise, we have the following definition.

\begin{definition}
A l.c.K. manifold $(M,g,J)$ is called a Vaisman manifold if $\nabla \theta = 0$, where $\nabla$ is the Levi-Civita connection of $g$.
\end{definition}

\begin{remark}[\cite{Vaisman}, \cite{Dragomir}]
\label{Vaismandesc}
Since the Lee form of a Vaisman manifold is parallel, it has constant norm. Thus, the underlying Hermitian metric can be rescaled such that $||\theta||_{g} = 1$. The fundamental form $\Omega$ of the Vaisman metric with unit length Lee form satisfies the equality:
\begin{equation}
\label{FundVaisman}
\Omega = -{\rm{d}}(J\theta) + \theta \wedge J\theta.
\end{equation}
In the above description, we have that $-{\rm{d}}(J\theta) \geq 0$,  e.g. \cite[Theorem 5.1]{Dragomir}.
\end{remark}

\begin{example}
Let $(\mathcal{C}(Q),g_{\mathcal{C}},\mathcal{J})$ be the K\"{a}hler cone of a Sasaki manifold $(Q,g_{Q})$. By considering the identification $ \mathcal{C}(Q) \cong Q \times \mathbbm{R}$ defined by $\varphi = \log(r)$, it follows that $g_{\mathcal{C}} = {\rm{e}}^{2\varphi}\big (g_{Q} + {\rm{d}}\varphi \otimes  {\rm{d}}\varphi \big )$. From above, we consider the Hermitian manifold $(\mathcal{C}(Q),{\rm{e}}^{-2\varphi}g_{\mathcal{C}},\mathcal{J})$. Given a Sasaki automorphism $\phi \colon Q \to Q$ and $c > 0$, let $\Gamma_{\phi,c}$ be the cyclic group defined by
\begin{center}
$\Gamma_{\phi,c} := \Big \langle (x,\varphi) \mapsto (\phi(x),\varphi + \log(c)) \Big \rangle$.
\end{center}
Since $\Gamma_{\phi,c}$ acts by holomorphic isometries on $(\mathcal{C}(Q),{\rm{e}}^{-2\varphi}g_{\mathcal{C}},\mathcal{J})$, the Hermitian structure $({\rm{e}}^{-2\varphi}g_{\mathcal{C}},\mathcal{J})$ descends to a Hermitian structure $(g,J)$ on $\Sigma_{\phi,c}(Q) = \mathcal{C}(Q)/\Gamma_{\phi,c}$. By construction, since $g \myeq {\rm{e}}^{-2\varphi}g_{\mathcal{C}}$, it follows that $(\Sigma_{\phi,c}(Q) ,g,J)$ defines a l.c.K. manifold with Lee form described by $\theta \myeq -2{\rm{d}}\varphi$, see for instance \cite{Gini}. It is straightforward to show that $\theta$ is also parallel with respect to the Levi-Civita connection of $g$. Thus, we have that $(\Sigma_{\phi,c}(Q) ,g,J)$ is a Vaisman manifold.
\end{example}

\begin{remark}
In the previous example we have the following identification
\begin{equation}
\Sigma_{\phi,c}(Q) \cong \frac{Q \times [0,\log(c)]}{(\phi(x),0) \sim (x,\log(c))},
\end{equation}
i.e. $\Sigma_{\phi,c}(Q)$ can be seen as the suspension of $Q$ by $\phi$ over the circle of length $2\pi \log(c)$, e.g. \cite{Bazzoni}. We shall refer to $\Sigma_{\phi,c}(Q)$ as the suspension by $(\phi,c)$ of $Q$.
\end{remark}

Given a conformal manifold $(M,[g])$, we have the following notion of compatible connection with the conformal class $ [g]$.

\begin{definition}
\label{weylconnectiondef}
A Weyl connection $D$ on a conformal manifold $(M,[g])$ is a torsion-free connection which preserves the conformal class $[g]$. In this last setting, we say that $D$ defines a Weyl structure on $(M,[g])$ and that $(M,[g],D)$ is a Weyl manifold.
\end{definition}

In the above definition by preserving the conformal class it means that for each $g' \in [g]$, we have a $1$-form $\theta_{g'}$ (Higgs field), such that 
\begin{equation}
\label{higgs}
Dg' = \theta_{g'} \otimes g'.
\end{equation}
Let $(M,[g],D)$ be a Weyl manifold, in what follows we shall fix a representative $g$ for the underlying conformal class and consider the $1$-form $\theta_{g}$ which defines its Higgs field.
\begin{definition}
We say that a Weyl manifold $(M,[g],D)$ is a Hermitian-Weyl manifold if it admits an almost complex structure $J \in {\text{End}}(TM)$, which satisfies:

\begin{enumerate}

\item $g(JX,JY) = g(X,Y)$, $\forall X,Y \in \mathfrak{X}(M);$

\item $DJ = 0.$

\end{enumerate}

\end{definition}

An important result to be considered in the setting of Hermitian-Weyl manifolds is the following proposition.

\begin{proposition}[Vaisman]
\label{HweylVaisman}
Any Hermitian-Weyl manifold of (real) dimension at least $6$ is l.c.K.. Conversely, a l.c.K. manifold of (real) dimension at least $4$ admits a compatible Hermitian-Weyl structure.
\end{proposition}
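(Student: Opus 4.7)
The plan for the first assertion is, given a Hermitian--Weyl manifold $(M,[g],D,J)$ of real dimension $2n \geq 6$, to fix a representative $g$ with Higgs field $\theta_g$ and push the three structural identities $DJ = 0$, $Dg = \theta_g \otimes g$, and $\mathcal{T}_{D} = 0$ through to the fundamental form $\Omega = g(J\otimes \mathbbm{1})$. A short tensor computation of $(D_X\Omega)(Y,Z) = (D_X g)(JY,Z)$, using $DJ = 0$, will give $D\Omega = \theta_g \otimes \Omega$. Since $D$ is torsion-free the exterior derivative of a differential form coincides with the alternation of $D$ applied to it, so I expect to obtain
\begin{equation*}
d\Omega = \theta_g \wedge \Omega.
\end{equation*}

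Next I would prove $\theta_g$ is closed. Differentiating the relation above and using $d^{2} = 0$ yields $d\theta_g \wedge \Omega = 0$, and here the dimension hypothesis is decisive: the Lefschetz map $L \colon \Omega^{2}(M) \to \Omega^{4}(M)$, $\alpha \mapsto \alpha \wedge \Omega$, is pointwise injective precisely when $n \geq 3$ (cf.\ \cite[Corollary 3.13]{Wells}, the same fact invoked earlier in Section \ref{S2}). This Lefschetz injectivity step is where I expect the main obstacle to lie: below this dimension threshold $L$ fails to be injective on $2$-forms and closedness of $\theta_g$ cannot be extracted algebraically from $d\theta_g \wedge \Omega = 0$. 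Once injectivity is in hand I conclude $d\theta_g = 0$, so $(M,g,J)$ is l.c.K.\ with Lee form $\theta_g$ in the sense of Definition \ref{DEFLCK}(2).

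For the converse, starting from an l.c.K.\ manifold $(M,g,J)$ with closed Lee form $\theta$ and $\dim_{\mathbbm{R}} M \geq 4$, I would construct the candidate Weyl connection by the classical explicit formula
\begin{equation*}
D_X Y := \nabla^{g}_X Y - \tfrac{1}{2}\bigl(\theta(X) Y + \theta(Y) X - g(X,Y)\,\theta^{\sharp}\bigr),
\end{equation*}
where $\nabla^{g}$ is the Levi-Civita connection of $g$ and $\theta^{\sharp}$ is the $g$-dual of $\theta$. A direct computation using $\nabla^{g} g = 0$ will show that $D$ is torsion-free and satisfies $Dg = \theta \otimes g$, and uniqueness of Weyl connections with prescribed Higgs field identifies $D$ with the Weyl structure associated to $([g], \theta)$ from Definition \ref{weylconnectiondef}.

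The remaining task, and the substantive point of the converse, is to check $DJ = 0$. Expanding $(D_X J)Y = D_X(JY) - J(D_X Y)$ using the formula for $D$ and cancelling the terms symmetric in $J$, this will reduce to the pointwise identity
\begin{equation*}
(\nabla^{g}_X J)Y = \tfrac{1}{2}\bigl(\theta(JY)X - \theta(Y) JX - g(X,JY)\theta^{\sharp} + g(X,Y) J\theta^{\sharp}\bigr),
\end{equation*}
which is the well-known formula for $\nabla^{g} J$ on an l.c.K.\ manifold and is equivalent to the combined system $d\Omega = \theta \wedge \Omega$, $d\theta = 0$. Since both conditions hold by hypothesis, $DJ = 0$ follows and $(M,[g], D, J)$ is Hermitian--Weyl. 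Modulo verifying this expression for $\nabla^{g} J$ directly from the l.c.K.\ condition, the converse is routine, and no dimension restriction beyond $\dim_{\mathbbm{R}} M \geq 4$ (needed merely to make the setup nontrivial) enters.
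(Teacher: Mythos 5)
The paper does not prove this proposition --- it is stated as a classical result of Vaisman and used as a black box --- so there is no internal proof to compare against; I can only assess your argument on its own terms, and it is essentially the standard one and correct. For the forward direction, the chain $DJ=0$, $Dg=\theta_g\otimes g$, $\mathcal{T}_D=0$ $\Rightarrow$ $D\Omega=\theta_g\otimes\Omega$ $\Rightarrow$ ${\rm{d}}\Omega=\theta_g\wedge\Omega$ $\Rightarrow$ ${\rm{d}}\theta_g\wedge\Omega=0$ is right, and you correctly locate the role of the dimension hypothesis: ${\rm{L}}\colon\Lambda^{2}\to\Lambda^{4}$ is pointwise injective exactly when $n\geq 3$, i.e.\ real dimension $\geq 6$, which is why a Hermitian-Weyl $4$-manifold need not have closed Lee form. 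For the converse, your explicit $D$ is torsion-free with $Dg=\theta\otimes g$ as claimed. The only step you leave open is the ``well-known formula'' for $\nabla^{g}J$ on an l.c.K.\ manifold; rather than verifying it by a tensor computation, the cleanest way to close this is to observe that your formula for $D$ is precisely the conformal-change formula relating $\nabla^{g}$ to the Levi-Civita connection of the local K\"ahler metrics $g_{U}={\rm{e}}^{-f_{U}}g$ of Definition \ref{DEFLCK}(1) (since $\theta\myeq{\rm{d}}f_{U}$), so $D$ coincides on each $U$ with $\nabla^{g_{U}}$, which kills $J$ because $g_{U}$ is K\"ahler; this also shows $D$ is globally well defined and gives $DJ=0$ without ever writing down $\nabla^{g}J$. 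With that substitution (or with the deferred identity actually checked), the proof is complete.
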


For a compact Hermitian-Weyl manifold $(M,[g],D,J)$ Gauduchon \cite{Gauduchon} showed that, up to homothety, there is a unique choice of metric $g_{0}$ in the conformal class $[g]$ such that the corresponding $1$-form $\theta_{g_{0}}$ is co-closed. 

\begin{definition}
The unique (up to homothety) l.c.K. metric $g_{0}$ in the conformal class $[g]$ of $(M,[g],D,J)$ with harmonic associated Lee form is called the Gauduchon metric.
\end{definition}

It follows from Proposition \ref{HweylVaisman} that any compact Vaisman manifold admits a Hermitian-Weyl structure uniquely determined by the Gauduchon metric.

\begin{definition}
A Hermitian-Weyl manifold is Hermitian Weyl-Einstein if the symmetric part of the Ricci tensor of the Weyl connection is proportional to the metric. 
\end{definition}
In the setting of the above definition, it can be shown that the Hermitian Weyl-Einstein condition is equivalent to 
\begin{equation}
{\rm{Ric}}(g_{0}) = (n-2)\big (||\theta_{g_{0}}||^{2}g_{0} - \theta_{g_{0}} \otimes \theta_{g_{0}} \big),
\end{equation}
see for instance \cite{Gauduchon2}. From a deep result of Gauduchon in \cite{Gauduchon2}, it follows that:

\begin{theorem}
\label{HEW}
Let $(M,[g],D,J)$ be a compact Hermitian Weyl-Einstein manifold. Then the Ricci tensor of the Weyl connection vanishes identically and the Lee form is parallel. In particular, $(M,g_{0},J)$ is Vaisman. 
\end{theorem}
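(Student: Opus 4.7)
The plan is to follow Gauduchon's original approach, organized into three main steps: reduce to the Gauduchon representative of the conformal class $[g]$, invoke the explicit expression for the Ricci tensor of the Weyl connection, and close the argument by a Bochner-type integration on the compact manifold $M$.

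First, I would select the Gauduchon metric $g_{0} \in [g]$, which by definition satisfies $\delta_{g_{0}}\theta_{0} = 0$ for $\theta_{0} := \theta_{g_{0}}$. By Proposition~\ref{HweylVaisman} (valid for $\dim_{\mathbbm{R}} M \geq 6$; the four-dimensional case is handled separately by a direct computation with the K\"ahler-Weyl data), the manifold $(M,g_{0},J)$ is l.c.K., hence ${\rm{d}}\theta_{0} = 0$ and therefore $\theta_{0}$ is harmonic on $(M,g_{0})$.

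Second, I would use the classical identity relating the Ricci tensor of the Weyl connection $D$ (with Higgs $1$-form $\theta_{0}$ relative to $g_{0}$) to the Levi-Civita Ricci tensor of $g_{0}$, schematically
\begin{equation*}
{\rm{Ric}}^{D} = {\rm{Ric}}(g_{0}) + \tfrac{n-2}{2}\bigl( \nabla\theta_{0} + (\nabla\theta_{0})^{T} \bigr) + \tfrac{n-2}{2}\bigl( \theta_{0} \otimes \theta_{0} - ||\theta_{0}||^{2} g_{0} \bigr) + \tfrac{1}{2}(\delta\theta_{0})\, g_{0} - \tfrac{1}{2}{\rm{d}}\theta_{0},
\end{equation*}
with $n = \dim_{\mathbbm{C}} M$. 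Since ${\rm{d}}\theta_{0} = 0 = \delta\theta_{0}$, the skew part of ${\rm{Ric}}^{D}$ vanishes automatically. The Hermitian-Weyl condition $DJ = 0$ forces ${\rm{Ric}}^{D}$ to be $J$-invariant, and combined with the Weyl-Einstein hypothesis ${\rm{Ric}}^{D}_{\rm{sym}} = f g_{0}$ for some smooth $f \colon M \to \mathbbm{R}$, this yields the pointwise identity ${\rm{Ric}}^{D} = f g_{0}$.

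Third, I would substitute the displayed formula into the Bochner-Weitzenb\"ock identity $0 = \int_{M} \bigl( |\nabla\theta_{0}|^{2} + {\rm{Ric}}(g_{0})(\theta_{0}^{\sharp},\theta_{0}^{\sharp}) \bigr) {\rm{d}}V_{g_{0}}$ for the harmonic $1$-form $\theta_{0}$. Replacing ${\rm{Ric}}(g_{0})$ via the Weyl-Einstein equation and integrating by parts (using $\delta\theta_{0} = 0$ and ${\rm{d}}\theta_{0} = 0$), the Hessian cross terms reorganize into a nonnegative quadratic in $\nabla\theta_{0}$, while tracing the Weyl-Einstein equation and applying the divergence theorem with $\delta\theta_{0} = 0$ forces $f$ to be constant. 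The main obstacle is precisely this rearrangement: it requires juggling $\delta\theta_{0} = 0$, ${\rm{d}}\theta_{0} = 0$ and the $J$-invariance of ${\rm{Ric}}^{D}$ simultaneously, and it is here that $DJ = 0$ enters non-trivially to eliminate the indefinite contributions. Once $\nabla\theta_{0} = 0$ is established, $||\theta_{0}||$ is constant, plugging back into the Weyl-Einstein equation yields $f \equiv 0$ and therefore ${\rm{Ric}}^{D} \equiv 0$; parallelism of $\theta_{0}$ is by definition the Vaisman condition, concluding the proof.
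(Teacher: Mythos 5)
The paper does not actually prove this statement: it is quoted from Gauduchon \cite{Gauduchon2} as a known (and, as the paper says, deep) result, so your proposal has to stand on its own. As written it has a genuine gap at the decisive step. Your first two steps are fine: passing to the Gauduchon gauge gives $\delta\theta_{0}=0$, the l.c.K.\ condition gives ${\rm{d}}\theta_{0}=0$, so $\theta_{0}$ is harmonic and $\nabla\theta_{0}$ is symmetric, and the schematic formula for ${\rm{Ric}}^{D}$ has the right shape. The problem is the Bochner step. If you substitute the Weyl--Einstein equation into $0=\int_{M}\bigl(|\nabla\theta_{0}|^{2}+{\rm{Ric}}(g_{0})(\theta_{0}^{\sharp},\theta_{0}^{\sharp})\bigr)$, the "Hessian cross term" is $\nabla\theta_{0}(\theta_{0}^{\sharp},\theta_{0}^{\sharp})=\tfrac{1}{2}\langle {\rm{d}}|\theta_{0}|^{2},\theta_{0}\rangle$, which integrates to zero by $\delta\theta_{0}=0$; it does not "reorganize into a nonnegative quadratic in $\nabla\theta_{0}$", it simply disappears. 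The quadratic term $\theta_{0}\otimes\theta_{0}-\|\theta_{0}\|^{2}g_{0}$ evaluated on $(\theta_{0}^{\sharp},\theta_{0}^{\sharp})$ also vanishes identically. What survives is exactly $0=\int_{M}\bigl(|\nabla\theta_{0}|^{2}+f|\theta_{0}|^{2}\bigr)$, and the entire difficulty of the theorem is the sign of $f$ (essentially ${\rm{scal}}^{D}$): nothing in your argument controls it. Your claim that tracing the Weyl--Einstein equation and integrating forces $f$ to be constant is also unsubstantiated --- the trace only relates $f$ to ${\rm{scal}}(g_{0})$ and $\|\theta_{0}\|^{2}$ pointwise; the constancy of ${\rm{scal}}^{D}$ in the Gauduchon gauge is itself a nontrivial theorem requiring the contracted second Bianchi identity for $D$, and even granted, one still must show the constant is $\leq 0$ before the Bochner identity yields $\nabla\theta_{0}=0$.

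Gauduchon's actual route is structured differently: one first proves that on a \emph{compact Einstein--Weyl} manifold the Gauduchon-gauge Lee vector field $\theta_{0}^{\sharp}$ is Killing (this is where the second Bianchi identity and the integration argument really live), then combines Killing with closedness to get $\nabla\theta_{0}=0$, and only then uses the Hermitian structure ($DJ=0$, equivalently the Vaisman identity $\Omega=-{\rm{d}}(J\theta_{0})+\theta_{0}\wedge J\theta_{0}$) to pin down ${\rm{Ric}}^{D}\equiv 0$. Your appeal to $DJ=0$ "entering non-trivially to eliminate the indefinite contributions" names the right ingredient but gives no mechanism for it, and that mechanism is precisely the content of the theorem. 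To repair the proposal you would need to either establish the Killing property of $\theta_{0}^{\sharp}$ from the Einstein--Weyl condition on a compact manifold, or find an independent argument fixing the sign of ${\rm{scal}}^{D}$.
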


In the above setting, we have that $\theta_{g_{0}}$ is harmonic and ${\rm{Ric}}(g_{0})$ is non-negative, using the Weitzenb\"{o}ck formula, one can show that $b_{1}(M) = 1$. Now we consider the following structure theorem \cite{Verbitsky}.

\begin{theorem}
\label{isovaisman}
Every compact Vaisman manifold with $b_{1}(M) = 1$ is isomorphic to $(\Sigma_{\phi,c}(Q),g,J)$, where $Q$ is some compact Sasakian manifold.
\end{theorem}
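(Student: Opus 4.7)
The plan is to invert the suspension construction from the example following Theorem \ref{HEW}. After rescaling so that $\|\theta\|_{g}=1$ (possible because $\theta$ is parallel, hence of constant norm), I single out the minimal Galois cover of $M$ on which $\theta$ becomes exact. Concretely, since $\theta$ is closed and $b_{1}(M)=1$, the period homomorphism $\rho\colon\pi_{1}(M)\to\mathbb{R}$, $[\gamma]\mapsto\int_{\gamma}\theta$, has infinite cyclic image, and the corresponding Galois cover $\pi\colon\widehat{M}\to M$ carries a smooth function $\varphi\colon\widehat{M}\to\mathbb{R}$ with $\pi^{*}\theta=-2\,{\rm d}\varphi$. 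By Definition \ref{DEFLCK}, the conformally related Hermitian metric $\widehat{g}_{\mathcal{C}}:=e^{2\varphi}\pi^{*}g$ is globally K\"ahler on $\widehat{M}$; let $\widehat{\omega}_{\mathcal{C}}$ denote its fundamental form.

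The core step is to show that $(\widehat{M},\widehat{g}_{\mathcal{C}},J)$ is biholomorphic to the K\"ahler cone $\mathcal{C}(Q)$ of a compact Sasakian manifold $Q$. Substituting $\pi^{*}\theta=-2\,{\rm d}\varphi$ into the Vaisman identity $\Omega=-{\rm d}(J\theta)+\theta\wedge J\theta$ from Remark \ref{Vaismandesc} and multiplying by $e^{2\varphi}$, a direct computation recovers, up to a universal normalization constant, the cone potential formula $\widehat{\omega}_{\mathcal{C}}=\tfrac{1}{4}\,{\rm d}{\rm d}^{c}\,r^{2}$ with $r:=e^{\varphi}$. Since $\theta^{\sharp}$ is parallel and nowhere vanishing, $\varphi$ is a proper submersion whose flow realizes a diffeomorphism $\widehat{M}\cong Q\times\mathbb{R}_{+}$ with $Q:=\{\varphi=0\}$ and $r$ playing the role of the cone coordinate; the compactness of $M$ forces $Q$ to be compact, and by definition $(Q,g_{Q}:=\widehat{g}_{\mathcal{C}}|_{Q})$ is Sasakian with $\widehat{M}=\mathcal{C}(Q)$.

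Finally, the deck group of $\pi$ is infinite cyclic, generated by some $\gamma$ preserving $\pi^{*}g$, $J$, and $\pi^{*}\theta$. These constraints force $\gamma^{*}\varphi=\varphi+\log(c)$ for some $c>1$, so under the identification $\widehat{M}\cong Q\times\mathbb{R}_{+}$ the map $\gamma$ scales $r$ by $c$, and, being a holomorphic isometry of $\pi^{*}g$, its restriction to the level set $Q=\{r=1\}$ (transported along the $r$-flow) is a Sasaki automorphism $\phi\colon Q\to Q$ with $\gamma(q,r)=(\phi(q),cr)$. Hence $M=\widehat{M}/\langle\gamma\rangle=\mathcal{C}(Q)/\Gamma_{\phi,c}=\Sigma_{\phi,c}(Q)$, as claimed.

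The main technical obstacle is the second step: verifying that $\widehat{g}_{\mathcal{C}}$ is genuinely the metric of a K\"ahler \emph{cone}, and not merely some K\"ahler perturbation of it. The parallelism $\nabla\theta=0$ together with the normalization $\|\theta\|=1$ is exactly what guarantees that $e^{2\varphi}$ is a valid squared radial coordinate producing the cone splitting $\widehat{M}\cong Q\times\mathbb{R}_{+}$; careful bookkeeping of sign conventions for $J$ acting on $1$-forms and for $d^{c}=J\circ d$ is needed to match the Vaisman identity with the prefactor $\tfrac{1}{4}\,{\rm d}{\rm d}^{c}r^{2}$ of the cone potential.
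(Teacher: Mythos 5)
The paper offers no proof of this statement: it is quoted as a structure theorem from \cite{Verbitsky} (the Ornea--Verbitsky structure theorem for compact Vaisman manifolds), so there is nothing in-text to compare against. Your reconstruction is the standard argument and, as far as I can check, it is correct; moreover you use the hypothesis $b_{1}(M)=1$ in exactly the right place, namely to guarantee that the period group of $\theta$ is infinite cyclic rather than dense in $\mathbbm{R}$, so that the minimal K\"ahler cover is a $\mathbbm{Z}$-cover (for $b_{1}>1$ this is the delicate point of the general structure theorem). Two spots deserve to be made explicit. First, the cone identification: with the paper's normalization $\pi^{*}\theta=-2\,{\rm d}\varphi$ and $\|\theta\|_{g}=1$, the Vaisman identity gives ${\rm{d}}{\rm{d}}^{c}{\rm{e}}^{2\varphi}={\rm{e}}^{2\varphi}\Omega$, so the correct radial coordinate is $\rho=2{\rm{e}}^{\varphi}$ (then $\tfrac14{\rm{d}}{\rm{d}}^{c}\rho^{2}=\widehat{\omega}_{\mathcal{C}}$ exactly); simultaneously the flow of the parallel, hence Killing and complete, field $\theta^{\sharp}$ splits $\widehat{M}\cong Q\times\mathbbm{R}$ isometrically as $\pi^{*}g=g_{Q}'+4\,{\rm d}\varphi\otimes{\rm d}\varphi$, whence $\widehat{g}_{\mathcal{C}}=\rho^{2}(g_{Q}'/4)+{\rm d}\rho\otimes{\rm d}\rho$ is genuinely a metric cone and not just a K\"ahler metric with global potential --- this is the verification you flag as the technical obstacle, and it does go through. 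Second, compactness and connectedness of $Q$: $\varphi$ descends to a submersion $\bar{\varphi}\colon M\to\mathbbm{R}/\log(c)\mathbbm{Z}$, the covering map identifies $\varphi^{-1}(0)$ diffeomorphically with the compact fibre $\bar{\varphi}^{-1}(0)$, and connectedness of $Q$ follows from connectedness of $\widehat{M}\cong Q\times\mathbbm{R}$. With these details supplied, the deck-group analysis in your last paragraph is routine and the proof is complete.
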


Combining the result above with the previous comments, one can show that every compact Hermitian Weyl-Einstein manifold is isomorphic to  $(\Sigma_{\phi,c}(Q),g_{0},J)$ as a Vaisman manifold, where $Q$ is a Sasaki-Einstein manifold. Thus, in the setting of Theorem \ref{HEW} we have $g_{0} \myeq {\rm{e}}^{-2\varphi}g_{CY}$, where $g_{CY}$ is a Calabi-Yau metric on the metric cone $\mathcal{C}(Q)$ of a Sasaki-Einstein manifold $Q$.

\section{Proof of main results}
\label{S3}

\begin{lemma}
\label{lemmafund}
Let $(M,\Omega,J)$ be a compact l.c.K. manifold. Then 
\begin{equation}
\label{LCRFcohomology}
 \mathfrak{R}^{(1)}(\Omega) =  \Upsilon - \frac{1}{2}{\rm{d}}(J\theta),
\end{equation}
where $\theta$ is the Lee form of $(M,\Omega,J)$ and $\Upsilon \in 2\pi c_{1}(M)$. In particular, if $\mathfrak{R}^{(1)}(\Omega)=0$, then $c_{1}(M) = 0$.
\end{lemma}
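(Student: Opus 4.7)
\medskip

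\noindent\textbf{Proof proposal.} The plan is to rewrite the non-elliptic piece $\tfrac{1}{2}(\partial\partial^{\ast}\Omega+\bar\partial\bar\partial^{\ast}\Omega)$ in $\mathfrak{R}^{(1)}(\Omega)$ explicitly in terms of the Lee form $\theta$, using the l.c.K. identity ${\rm d}\Omega=\theta\wedge\Omega$ and the earlier lemma identifying the torsion $(1,0)$-form as $\tau=-\sqrt{-1}\,\bar\partial^{\ast}\Omega$. Once this is in hand, the stated decomposition is just bookkeeping modulo exact forms.

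First I would split $\theta=\theta^{1,0}+\theta^{0,1}$ and extract the $(2,1)$ and $(1,2)$ parts of the l.c.K. equation to get $\partial\Omega=\theta^{1,0}\wedge\Omega$ and $\bar\partial\Omega=\theta^{0,1}\wedge\Omega$. Applying the primitivity relation $\Lambda(\alpha\wedge\Omega)=(n-1)\alpha$ for a $(1,0)$-form $\alpha$, this gives $\tau=\Lambda(\partial\Omega)=(n-1)\theta^{1,0}$, hence
\begin{equation*}
\bar\partial^{\ast}\Omega=\sqrt{-1}(n-1)\theta^{1,0},\qquad \partial^{\ast}\Omega=-\sqrt{-1}(n-1)\theta^{0,1}.
\end{equation*}
Differentiating and adding yields $\partial\partial^{\ast}\Omega+\bar\partial\bar\partial^{\ast}\Omega=\sqrt{-1}(n-1)\bigl(\bar\partial\theta^{1,0}-\partial\theta^{0,1}\bigr)$.

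Next I would exploit that $\theta$ is closed (equivalently $\Omega$ is l.c.K.), so the type decomposition of ${\rm d}\theta=0$ gives $\partial\theta^{1,0}=0$, $\bar\partial\theta^{0,1}=0$ and $\bar\partial\theta^{1,0}=-\partial\theta^{0,1}$. Substituting and using $J\theta=\sqrt{-1}(\theta^{0,1}-\theta^{1,0})$, a short computation produces
\begin{equation*}
{\rm d}(J\theta)=2\sqrt{-1}\,\partial\theta^{0,1},\qquad \partial\partial^{\ast}\Omega+\bar\partial\bar\partial^{\ast}\Omega=-(n-1)\,{\rm d}(J\theta).
\end{equation*}
Feeding this into the definition $\mathfrak{R}^{(1)}(\Omega)={\rm Ric}^{(1)}(\Omega)-\tfrac{1}{2}(\partial\partial^{\ast}\Omega+\bar\partial\bar\partial^{\ast}\Omega)$ and setting
\begin{equation*}
\Upsilon\;:=\;{\rm Ric}^{(1)}(\Omega)+\tfrac{n}{2}\,{\rm d}(J\theta),
\end{equation*}
one checks $\mathfrak{R}^{(1)}(\Omega)=\Upsilon-\tfrac{1}{2}{\rm d}(J\theta)$. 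Since ${\rm Ric}^{(1)}(\Omega)\in 2\pi c_{1}(M)$ and ${\rm d}(J\theta)$ is exact, $\Upsilon$ indeed represents $2\pi c_{1}(M)$ in de Rham cohomology.

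For the last assertion, if $\mathfrak{R}^{(1)}(\Omega)=0$ then $\Upsilon=\tfrac{1}{2}{\rm d}(J\theta)$ is exact, so $2\pi c_{1}(M)=[\Upsilon]=0$ in $H^{2}_{dR}(M;\mathbb{R})$, i.e.\ $c_{1}(M)=0$. The main obstacle is simply the bookkeeping of conventions: one has to be careful with the action of $J$ on forms (the paper's convention $J\theta=-\theta\circ J$), the sign in $\tau=-\sqrt{-1}\bar\partial^{\ast}\Omega$, and the constant in $\Lambda(\alpha\wedge\Omega)=(n-1)\alpha$, because any sign slip destroys the cancellation that ultimately yields $-(n-1)\,{\rm d}(J\theta)$ and so confounds the exactness of $\Upsilon-{\rm Ric}^{(1)}(\Omega)$.
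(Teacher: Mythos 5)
Your proposal is correct and follows essentially the same route as the paper: the heart of both arguments is the identity $\partial\partial^{\ast}\Omega+\bar\partial\bar\partial^{\ast}\Omega=-(n-1)\,{\rm d}(J\theta)$, obtained from $\tau=\Lambda(\partial\Omega)=(n-1)\theta^{1,0}$ together with the type decomposition of ${\rm d}\theta=0$. The only cosmetic difference is that you define $\Upsilon:={\rm Ric}^{(1)}(\Omega)+\tfrac{n}{2}{\rm d}(J\theta)$ globally and then check it represents $2\pi c_{1}(M)$, whereas the paper obtains the same form by gluing the local Chern--Ricci forms $-\tfrac{1}{2}{\rm d}{\rm d}^{c}\log\det(\omega_{U})$ of the local K\"ahler metrics; the two constructions agree.
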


\begin{proof}
Given a l.c.K. manifold $(M,\Omega,J)$, it follows that $\Omega \myeq {\rm{e}}^{f_{U}}\omega_{U}$, thus
\begin{equation}
{\rm{Ric}}^{(1)}(\Omega) \myeq  -\frac{1}{2}{\rm{d}}{\rm{d}}^{c}\log(\det(\omega_{U})) - \frac{n}{2}{\rm{d}}{\rm{d}}^{c}f_{U}.
\end{equation}
Since, $\theta \myeq {\rm{d}}f_{U}$, it follows that ${\rm{Ric}}^{(1)}(\Omega) \myeq  -\frac{1}{2}{\rm{d}}{\rm{d}}^{c}\log(\det(\omega_{U})) - \frac{n}{2}{\rm{d}}(J\theta)$. Thus, gluing $-\frac{1}{2}{\rm{d}}{\rm{d}}^{c}\log(\det(\omega_{U}))$, we obtain a globally defined $(1,1)$-form $\Upsilon$, such that 
\begin{equation}
{\rm{Ric}}^{(1)}(\Omega) = \Upsilon - \frac{n}{2}{\rm{d}}(J\theta).
\end{equation}
In particular, we have $\Upsilon \in 2\pi c_{1}(M)$. Thus, if $(M,\Omega,J)$ is a compact l.c.K. manifold, it follows that 
\begin{equation}
\label{LCRLCK}
\mathfrak{R}^{(1)}(\Omega) =  \Upsilon - \frac{n}{2}{\rm{d}}(J\theta)- \frac{1}{2} \big ( \partial \partial^{\ast}\Omega +  \bar{\partial} \bar{\partial}^{\ast} \Omega\big).
\end{equation}
Considering the Hodge decomposition $\theta = \theta^{1,0} + \theta^{0,1}$, we observe the following:
\begin{enumerate}
\item ${\rm{d}}\theta = 0 \Rightarrow \bar{\partial} \theta^{1,0} = - \partial \theta^{0,1}, \ \ \partial \theta^{1,0} = \bar{\partial} \theta^{0,1} = 0$;
\item $\bar{\partial} \theta^{1,0} = - \partial \theta^{0,1} \iff \bar{\partial} \tau = -\partial \bar{\tau} \Rightarrow {\rm{d}}(J\theta) = -\frac{2}{n-1}\sqrt{-1}\bar{\partial}\tau$;
\item $\partial \partial^{\ast}\Omega +  \bar{\partial} \bar{\partial}^{\ast}\Omega = -\sqrt{-1}\partial \bar{\tau} + \sqrt{-1}\bar{\partial} \tau = 2\sqrt{-1}\bar{\partial} \tau = -(n-1){\rm{d}}(J\theta)$.
\end{enumerate}
By using the above relations in Eq. (\ref{LCRLCK}), we conclude that 
\begin{equation}
\mathfrak{R}^{(1)}(\Omega) =  \Upsilon - \frac{1}{2}{\rm{d}}(J\theta).
\end{equation}
From this, it follows that $[\mathfrak{R}^{(1)}(\Omega)] \in 2\pi c_{1}(M)$. In particular, if $\mathfrak{R}^{(1)}(\Omega)=0$, we have $c_{1}(M)=0$. In this last case, Eq. (\ref{LCRFcohomology}) holds for $\Upsilon = \frac{1}{2}{\rm{d}}(J\theta)$.
\end{proof}

\begin{remark}
In the setting of the above lemma, the fact that $[\mathfrak{R}^{(1)}(\Omega)] \in 2\pi c_{1}(M)$ also can be seen as a consequence of \cite[Theorem 3.14, item (2)]{LiuYang}. In fact, since $\bar{\partial}\theta^{0,1} = 0$, it follows that $\bar{\partial}\partial^{\ast}\Omega = -\sqrt{-1}\bar{\partial}\bar{\tau} =  -\sqrt{-1}(n-1)\bar{\partial}\theta^{0,1} = 0.$

\end{remark}

\begin{theorem}
\label{mainproof}
Let $(Q,g_{SE})$ be a compact Sasaki-Einstein manifold, $\phi \colon Q \to Q$ a Sasaki automorphism, and $c > 0$. Then, the suspension $\Sigma_{\phi,c}(Q)$ by $(\phi,c)$ of $Q$ admits a Levi-Civita Ricci-flat Hermitian metric.
\end{theorem}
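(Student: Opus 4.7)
The plan is to implement the perturbation strategy outlined in the introduction: equip $\Sigma_{\phi,c}(Q)$ with the canonical Vaisman metric $\Omega$ induced from the Calabi-Yau cone metric $\omega_{CY}$ on $\mathcal{C}(Q)$, then perturb by $2\zeta\mathfrak{R}^{(1)}(\Omega)$ and solve for $\zeta$. The Vaisman decomposition $\Omega = -{\rm{d}}(J\theta) + \theta \wedge J\theta$ with parallel Lee form $\theta \myeq -2{\rm{d}}\varphi$, provided by Theorem \ref{HEW}--Theorem \ref{isovaisman} and the example preceding them, is the algebraic engine.

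First I would use Ricci-flatness of $\omega_{CY}$ to derive ${\rm{Ric}}^{(1)}(\Omega) = -\tfrac{N}{2}{\rm{d}}(J\theta)$, where $N$ denotes the complex dimension of $\Sigma_{\phi,c}(Q)$. Combined with Lemma \ref{lemmafund}, this forces the representative $\Upsilon$ of $2\pi c_1$ to vanish and yields $\mathfrak{R}^{(1)}(\Omega) = -\tfrac{1}{2}{\rm{d}}(J\theta)$. Consequently
\begin{equation*}
\Omega_\zeta := \Omega + 2\zeta \mathfrak{R}^{(1)}(\Omega) = -(1+\zeta){\rm{d}}(J\theta) + \theta \wedge J\theta,
\end{equation*}
which I would show is positive-definite exactly when $\zeta > -1$: the transverse K\"{a}hler form $-{\rm{d}}(J\theta)$ is semipositive on the contact distribution $\mathcal{D}$, while $\theta \wedge J\theta$ provides positivity on the Lee plane spanned by $\xi$ and $J\xi$. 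The identity ${\rm{d}}\Omega_\zeta = {\rm{d}}\Omega = -\theta \wedge {\rm{d}}(J\theta) = (1+\zeta)^{-1}\theta \wedge \Omega_\zeta$ then shows that $\Omega_\zeta$ is again l.c.K. with closed Lee form $\theta_\zeta := (1+\zeta)^{-1}\theta$.

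The critical step is the volume ratio. Since $(\theta \wedge J\theta)^2 = 0$ and $(-{\rm{d}}(J\theta))^N = 0$ (as $-{\rm{d}}(J\theta)$ has rank $2(N-1)$), the binomial expansion of $\Omega_\zeta^N$ collapses to the single cross-term
\begin{equation*}
\Omega_\zeta^N = N(1+\zeta)^{N-1}\bigl(-{\rm{d}}(J\theta)\bigr)^{N-1}\wedge (\theta \wedge J\theta) = (1+\zeta)^{N-1}\Omega^N,
\end{equation*}
so $\log\det(\Omega_\zeta) - \log\det(\Omega)$ is constant and ${\rm{Ric}}^{(1)}(\Omega_\zeta) = {\rm{Ric}}^{(1)}(\Omega)$. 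Re-applying Lemma \ref{lemmafund} to $\Omega_\zeta$ with Lee form $\theta_\zeta$, I would solve for $\Upsilon_\zeta$ from ${\rm{Ric}}^{(1)}(\Omega_\zeta) = \Upsilon_\zeta - \tfrac{N}{2}{\rm{d}}(J\theta_\zeta)$ and substitute back to arrive at
\begin{equation*}
\mathfrak{R}^{(1)}(\Omega_\zeta) = -\frac{N\zeta + 1}{2(1+\zeta)}\,{\rm{d}}(J\theta).
\end{equation*}
Since $-{\rm{d}}(J\theta)$ is the nonzero transverse K\"{a}hler form, this vanishes precisely when $\zeta = -1/N$, which lies in $(-1,+\infty)$, and $\Omega_{-1/N}$ is the desired Levi-Civita Ricci-flat Hermitian metric.

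The main obstacle I anticipate is the careful verification that $\Omega_\zeta$ retains its l.c.K. structure with a rescaled closed Lee form and that the volume ratio reduces to the clean constant $(1+\zeta)^{N-1}$; both miracles occur only because of the rigid Vaisman splitting $\Omega = -{\rm{d}}(J\theta) + \theta \wedge J\theta$, which is where the Sasaki-Einstein hypothesis on $Q$---equivalently the Hermitian Weyl-Einstein condition on $\Sigma_{\phi,c}(Q)$---is used in an essential way, and which would fail for a generic l.c.K. starting metric.
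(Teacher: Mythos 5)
Your proposal is correct and follows essentially the same route as the paper: the Vaisman splitting $\Omega = -{\rm{d}}(J\theta) + \theta\wedge J\theta$, the vanishing of $\Upsilon$ via Ricci-flatness of $\omega_{CY}$, the volume identity $\Omega_{\zeta}^{n} = (1+\zeta)^{n-1}\Omega^{n}$, and the resulting one-variable equation solved by $\zeta = -1/n$. The only (cosmetic) difference is that you re-apply Lemma \ref{lemmafund} to $\Omega_{\zeta}$ with Lee form $\theta_{\zeta}$ to extract $\mathfrak{R}^{(1)}(\Omega_{\zeta})$, whereas the paper computes $\partial\partial^{\ast}\Omega_{\zeta} + \bar{\partial}\bar{\partial}^{\ast}\Omega_{\zeta} = -(n-1){\rm{d}}(J\theta_{\zeta})$ directly; both rest on the same identities and yield the same expression $-\tfrac{n\zeta+1}{2(1+\zeta)}{\rm{d}}(J\theta)$.
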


\begin{proof}
Given a Sasaki-Einstein manifold $(Q,g_{SE})$, it follows that the K\"{a}hler cone $(\mathcal{C}(Q),\omega_{CY} :=\frac{1}{4}{\rm{d}}{\rm{d}}^{c}r^{2} ,\mathcal{J})$ is Calabi-Yau. Given a Sasaki automorphism $\phi \colon Q \to Q$ and $c > 0$, we have that the Hermitian structure $(\mathcal{C}(Q),{\rm{e}}^{-2\varphi}\omega_{CY},\mathcal{J})$, such that $\varphi = \log(r)$, descends to a Vaisman structure $(\Omega,J)$ on the suspension $\Sigma_{\phi,c}(Q)$. From Lemma \ref{lemmafund}, it follows that 
\begin{equation}
\mathfrak{R}^{(1)}(\Omega) =  \Upsilon - \frac{1}{2}{\rm{d}}(J\theta).
\end{equation}
By considering the projection $\wp \colon \mathcal{C}(Q) \to \Sigma_{\phi,c}(Q)$, since $\Omega \myeq {\rm{e}}^{-2\varphi}\omega_{CY}$, it follows that $\wp^{\ast}\Upsilon = {\rm{Ric}}^{(1)}(\omega_{CY}) = 0$. Thus, we have $\mathfrak{R}^{(1)}(\Omega) = -\frac{1}{2}{\rm{d}}(J\theta)$. Given $\zeta  > -1$, we consider the perturbed Hermitian metric $\Omega_{\zeta}$ on $\Sigma_{\phi,c}(Q)$ given by
\begin{equation}
\label{Hermitianfamily}
\Omega_{\zeta} := \Omega + 2\zeta\mathfrak{R}^{(1)}(\Omega) = \Omega - \zeta{\rm{d}}(J\theta).  
\end{equation}
Since $\Omega$ is Vaisman, it follows from Eq. (\ref{FundVaisman}) that 
\begin{equation}
\Omega_{\zeta} = -(1+\zeta){\rm{d}}(J\theta) + \theta \wedge J\theta \Rightarrow {\rm{d}}\Omega_{\zeta} = \theta_{\zeta} \wedge \Omega_{\zeta},
\end{equation}
such that $\theta_{\zeta} = \frac{1}{1+\zeta}\theta$. By construction, we have $\Omega_{\zeta}^{n} = (1+\zeta)^{n-1}\Omega^{n}$, thus
\begin{equation}
{\rm{Ric}}^{(1)}(\Omega_{\zeta}) = {\rm{Ric}}^{(1)}(\Omega) = \Upsilon - \frac{n}{2}{\rm{d}}(J\theta) = - \frac{n}{2}{\rm{d}}(J\theta).
\end{equation}
By considering the Hodge $\ast$-operator induced by $\Omega_{\zeta}$, it follows that 
\begin{equation}
\partial \partial^{\ast}\Omega_{\zeta} +  \bar{\partial} \bar{\partial}^{\ast}\Omega_{\zeta} = -(n-1){\rm{d}}(J\theta_{\zeta}) = -\bigg (\frac{n-1}{1+\zeta}\bigg){\rm{d}}(J\theta).
\end{equation}
Combining the above expressions, we obtain
\begin{equation}
\mathfrak{R}^{(1)}(\Omega_{\zeta}) := {\rm{Ric}}^{(1)}(\Omega_{\zeta}) - \frac{1}{2} \big ( \partial \partial^{\ast}\Omega_{\zeta} +  \bar{\partial} \bar{\partial}^{\ast} \Omega_{\zeta}\big) = \bigg (- n + \frac{n-1}{1+\zeta}\bigg){\rm{d}}(J\theta).
\end{equation}
Therefore, for $\zeta = -\frac{1}{n}$, we have that $(\Sigma_{\phi,c}(Q),\Omega_{\zeta},J)$ is Levi-Civita Ricci-flat.
\end{proof}

Since every 3-Sasakian manifold is Sasaki-Einstein (e.g. \cite{3Einstein}, \cite{Kashiwada2}) we obtain the following corollary.

\begin{corollary}
Let $(Q,g_{Q})$ be a compact $3$-Sasakian manifold, $\phi \colon Q \to Q$ a Sasaki automorphism, and $c > 0$. Then, the suspension $\Sigma_{\phi,c}(Q)$ by $(\phi,c)$ of $Q$ admits a Levi-Civita Ricci-flat Hermitian metric.
\end{corollary}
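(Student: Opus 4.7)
The plan is essentially to observe that the corollary is a direct specialization of Theorem \ref{mainproof} once we import one classical fact from the Sasakian literature, namely Kashiwada's theorem that any $3$-Sasakian structure is automatically Einstein. Concretely, I would proceed as follows.

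First, I would recall the definition of a $3$-Sasakian manifold $(Q,g_{Q})$: it is a Riemannian manifold equipped with a triple $(\xi_{1},\xi_{2},\xi_{3})$ of orthonormal Killing vector fields whose associated Sasakian structures $\mathcal{S}_{i}=(\xi_{i},\eta_{i},\Phi_{i},g_{Q})$ satisfy the quaternionic-type relations $\Phi_{i}\Phi_{j}=\Phi_{k}-\eta_{j}\otimes\xi_{i}$ (and cyclic permutations). Second, I would invoke the theorem of Kashiwada (see \cite{3Einstein}, \cite{Kashiwada2}) which asserts that every $3$-Sasakian manifold is Einstein with ${\rm{Ric}}(g_{Q})=(2n)g_{Q}$, where $\dim_{\mathbb{R}}Q=2n+1$; in particular $(Q,g_{Q})$ is Sasaki-Einstein with respect to, say, the Sasakian structure $\mathcal{S}_{1}$.

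Third, with this observation in hand, the data $(Q,g_{Q},\phi,c)$ exactly fits the hypothesis of Theorem \ref{mainproof}: we have a compact Sasaki-Einstein manifold, a Sasaki automorphism $\phi$ (which we regard with respect to $\mathcal{S}_{1}$), and a positive constant $c>0$. Applying Theorem \ref{mainproof} therefore produces a Hermitian metric on the suspension $\Sigma_{\phi,c}(Q)$ of the form $\Omega_{\zeta}=\Omega+2\zeta\mathfrak{R}^{(1)}(\Omega)$, with the distinguished value $\zeta=-\tfrac{1}{n}$, which satisfies $\mathfrak{R}^{(1)}(\Omega_{\zeta})=0$. This concludes the argument.

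There is essentially no analytic or computational obstacle here: the construction of $\Omega_{\zeta}$ was already carried out in the proof of Theorem \ref{mainproof}, and the only new ingredient is Kashiwada's theorem, which is standard. The only minor point to be careful about is that the notion of \emph{Sasaki automorphism} must be understood with respect to a chosen Sasakian structure among the three; once this choice is fixed (e.g.\ $\mathcal{S}_{1}$), the suspension $\Sigma_{\phi,c}(Q)$ and the induced Vaisman structure are unambiguously defined, and Theorem \ref{mainproof} applies verbatim.
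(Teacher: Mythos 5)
Your proposal is correct and follows exactly the paper's argument: the corollary is deduced from Theorem \ref{mainproof} by invoking the fact (due to Kashiwada) that every compact $3$-Sasakian manifold is Sasaki-Einstein. Your additional remark about fixing one of the three Sasakian structures to interpret ``Sasaki automorphism'' is a reasonable clarification but does not change the argument.
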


From \cite{Verbitsky} and Theorem \ref{mainproof}, we have the following result.

\begin{corollary}
Every compact Hermitian Weyl-Einstein manifold admits a Levi-Civita Ricci-flat Hermitian metric. In particular, every compact locally conformal hyperK\"{a}hler manifold admits a Levi-Civita Ricci-flat Hermitian metric. 
\end{corollary}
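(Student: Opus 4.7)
The plan is to reduce the statement to Theorem \ref{mainproof} via the structural theory of compact Hermitian Weyl-Einstein manifolds developed in Section \ref{S2}. First, given a compact Hermitian Weyl-Einstein manifold $(M,[g],D,J)$, I would invoke Theorem \ref{HEW} to replace the conformal class by its distinguished Gauduchon representative $g_{0}$, so that $(M,g_{0},J)$ becomes a Vaisman manifold with parallel Lee form, vanishing Weyl Ricci tensor, and (via the Weitzenb\"{o}ck argument recalled after Theorem \ref{HEW}) $b_{1}(M) = 1$.

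Next I would apply the structure theorem of Verbitsky (Theorem \ref{isovaisman}) to produce an identification of $(M,g_{0},J)$ with a suspension $(\Sigma_{\phi,c}(Q),g,J)$ for some compact Sasakian manifold $Q$, Sasaki automorphism $\phi$, and $c > 0$. The key additional input is to show that this underlying Sasakian manifold $Q$ is in fact Sasaki-Einstein: this is precisely the content of the paragraph following Theorem \ref{isovaisman}, which identifies the Gauduchon metric with ${\rm{e}}^{-2\varphi}g_{CY}$ for a Calabi-Yau cone metric $g_{CY}$ on $\mathcal{C}(Q)$, forcing $(Q,g_{SE})$ to be Sasaki-Einstein. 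Once $Q$ is Sasaki-Einstein, Theorem \ref{mainproof} directly furnishes a Levi-Civita Ricci-flat Hermitian metric on $\Sigma_{\phi,c}(Q) \cong M$, which is the desired conclusion.

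For the second assertion, I would recall the classical fact that every compact locally conformal hyperK\"{a}hler manifold carries, via its Gauduchon representative, a Hermitian Weyl-Einstein structure (the quaternionic action forces the symmetric part of the Ricci tensor of the Weyl connection to be proportional to the metric). Invoking the first part of the corollary then yields the Levi-Civita Ricci-flat Hermitian metric.

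The main obstacle in this plan is not computational but conceptual: one needs to verify that the Sasakian base $Q$ produced by Verbitsky's structure theorem inherits the Einstein condition from the Weyl-Einstein hypothesis on $M$. This is handled by transferring the vanishing Weyl Ricci tensor (Theorem \ref{HEW}) to the Ricci-flatness of the K\"{a}hler cone $(\mathcal{C}(Q),g_{CY})$, which by the equivalence recorded before Example \ref{fundexample} is equivalent to $Q$ being Sasaki-Einstein. Once this reduction is in place, the corollary is essentially immediate from Theorem \ref{mainproof}.
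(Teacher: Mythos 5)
Your proposal is correct and follows essentially the same route as the paper: the paper derives this corollary directly from Theorem \ref{mainproof} together with the structural facts recalled in Section \ref{S2} (Theorem \ref{HEW}, the Weitzenb\"{o}ck argument giving $b_{1}(M)=1$, and Verbitsky's Theorem \ref{isovaisman} identifying the manifold with a suspension of a Sasaki-Einstein manifold). Your additional care in verifying that the Sasakian base $Q$ is actually Sasaki-Einstein is exactly the content of the paragraph following Theorem \ref{isovaisman} in the paper.
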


Combining the result of Theorem \ref{mainproof} with \cite{LiuSanoTasin}, see also \cite[Conjecture 4]{Einsteinsphere}, we have the following corollaries.

\begin{corollary}
\label{LCRFBP}
Let ${\rm{L}}({\bf{a}}) := Y({\bf{a}}) \cap S^{2n+1}$ be the link of a Brieskorn-Pham singularity
\begin{equation}
Y({\bf{a}}) := \Big (  z_{0}^{a_{0}} + \cdots + z_{n}^{a_{n}} = 0 \Big) \subset \mathbbm{C}^{n+1},
\end{equation}
such that $n \geq 3$. Assume that $a_{0} \leq \cdots \leq a_{n}$. Then ${\rm{L}}({\bf{a}}) \times S^{1}$ admits a Levi-Civita Ricci-flat Hermitian metric if 
\begin{equation}
1 < \sum_{j = 0}^{n}\frac{1}{a_{j}} < 1 + \frac{n}{a_{n}}.
\end{equation}
\end{corollary}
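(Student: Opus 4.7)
The plan is to reduce the statement to a direct application of Theorem \ref{mainproof} (i.e. Theorem \ref{T1}) by supplying a Sasaki--Einstein structure on the link $L(\mathbf{a})$ from an external existence result and then taking the trivial Sasaki automorphism.

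First, I would invoke \cite[Theorem 1.4]{LiuSanoTasin} (see also \cite[Conjecture 4]{Einsteinsphere}), which asserts that under the numerical hypotheses $n \geq 3$, $a_0 \leq \cdots \leq a_n$, and
$$1 < \sum_{j=0}^{n}\frac{1}{a_j} < 1 + \frac{n}{a_n},$$
the Brieskorn--Pham link $L(\mathbf{a}) = Y(\mathbf{a}) \cap S^{2n+1}$ carries a (quasi-regular) Sasaki--Einstein metric $g_{SE}$. This produces a compact Sasaki--Einstein manifold $(Q,g_{SE}) = (L(\mathbf{a}), g_{SE})$ of the type demanded by the hypotheses of Theorem \ref{mainproof}.

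Next, I would take $\phi = \mathrm{id}_{L(\mathbf{a})}$, which is trivially a Sasaki automorphism, together with any $c > 0$. Unwinding the definition of the suspension gives
$$\Sigma_{\mathrm{id},c}(L(\mathbf{a})) = \frac{L(\mathbf{a}) \times [0,\log(c)]}{(x,0) \sim (x,\log(c))},$$
which is diffeomorphic to the product $L(\mathbf{a}) \times S^1$. Theorem \ref{mainproof} then furnishes a Levi-Civita Ricci-flat Hermitian metric on $\Sigma_{\mathrm{id},c}(L(\mathbf{a}))$, and transporting along this diffeomorphism yields a Levi-Civita Ricci-flat Hermitian metric on $L(\mathbf{a}) \times S^1$, as required.

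The only non-trivial ingredient is the external Sasaki--Einstein existence result from \cite{LiuSanoTasin}; once that is in hand, the construction is entirely mechanical, with no real obstacle beyond identifying the trivial suspension with the product manifold. In particular, no further perturbation argument is needed, since the Levi-Civita Ricci-flat metric is already built in the proof of Theorem \ref{mainproof} via the explicit one-parameter family $\Omega_\zeta = \Omega + 2\zeta\,\mathfrak{R}^{(1)}(\Omega)$ at the distinguished value $\zeta = -1/n$.
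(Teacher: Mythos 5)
Your proposal is correct and follows exactly the route the paper takes: the paper's proof of this corollary is precisely the combination of the Sasaki--Einstein existence result of \cite[Theorem 1.4]{LiuSanoTasin} on the link ${\rm{L}}({\bf{a}})$ under the stated numerical condition with Theorem \ref{mainproof} applied to the trivial suspension $\Sigma_{{\rm{id}},c}({\rm{L}}({\bf{a}})) \cong {\rm{L}}({\bf{a}}) \times S^{1}$. No further comment is needed.
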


\begin{corollary}
\label{exoticsphereLCRF}
Let ${\bf{\Sigma}}$ be an odd dimensional homotopy sphere which bounds a parallelizable manifold. Then ${\bf{\Sigma}} \times S^{1}$ admits a Levi-Civita Ricci-flat Hermitian metric.
\end{corollary}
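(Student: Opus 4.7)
The plan is to reduce the statement to Theorem \ref{mainproof} by exhibiting a Sasaki-Einstein structure on ${\bf{\Sigma}}$ itself, and then taking the suspension with respect to the trivial monodromy $\phi = {\rm{id}}$.

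First, I would invoke the classical work of Brieskorn identifying every odd-dimensional homotopy sphere which bounds a parallelizable manifold with a link ${\rm{L}}({\bf{a}})$ of an appropriate Brieskorn-Pham singularity: for a suitable choice of exponents ${\bf{a}} = (a_{0},\ldots,a_{n})$ with $a_{0} \leq \cdots \leq a_{n}$, one obtains a diffeomorphism ${\bf{\Sigma}} \cong {\rm{L}}({\bf{a}})$. Next, I would appeal to \cite[Theorem 1.4]{LiuSanoTasin} (equivalently, the resolution in the relevant numerical range of \cite[Conjecture 4]{Einsteinsphere}) to guarantee that the exponents ${\bf{a}}$ realizing ${\bf{\Sigma}}$ can be chosen so that the inequality
\begin{equation}
1 < \sum_{j=0}^{n}\frac{1}{a_{j}} < 1 + \frac{n}{a_{n}}
\end{equation}
is satisfied; this inequality is precisely the criterion ensuring that ${\rm{L}}({\bf{a}})$ carries a Sasaki-Einstein metric $g_{SE}$. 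Hence ${\bf{\Sigma}}$ itself may be promoted to a compact Sasaki-Einstein manifold $({\bf{\Sigma}}, g_{SE})$.

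With this Sasaki-Einstein structure in hand, I would take $\phi = {\rm{id}}_{{\bf{\Sigma}}}$ (which is trivially a Sasaki automorphism) and an arbitrary $c > 0$. By the definition recalled in the introduction, the suspension $\Sigma_{\phi,c}({\bf{\Sigma}})$ is then diffeomorphic to ${\bf{\Sigma}} \times S^{1}$, the circle factor being of length $2\pi\log(c)$ (irrelevant for an existence statement). Applying Theorem \ref{mainproof} to the triple $(({\bf{\Sigma}}, g_{SE}), \phi, c)$ directly produces a Levi-Civita Ricci-flat Hermitian metric on $\Sigma_{\phi,c}({\bf{\Sigma}}) \cong {\bf{\Sigma}} \times S^{1}$, which is what we wanted.

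The principal obstacle is not in our own argument but is imported wholesale from \cite{LiuSanoTasin}: one must know that \emph{every} diffeomorphism class of odd-dimensional parallelizably-bounding homotopy sphere — including each exotic class in Kervaire-Milnor's $bP_{2m}$ — is in fact realized by a Brieskorn-Pham link whose exponents satisfy the above inequality. Once this existence input is granted, the remainder of our argument is purely formal, as it consists only of feeding the resulting Sasaki-Einstein manifold into Theorem \ref{mainproof} with trivial monodromy.
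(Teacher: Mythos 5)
Your proposal is correct and follows essentially the same route as the paper: the paper's (implicit) argument for this corollary is precisely to invoke the Liu--Sano--Tasin result (building on Boyer--Galicki--Koll\'ar) that every odd-dimensional homotopy sphere bounding a parallelizable manifold is realized as a Brieskorn--Pham link admitting a Sasaki--Einstein metric, and then to feed that Sasaki--Einstein manifold into Theorem \ref{mainproof} with trivial monodromy so that the suspension is ${\bf{\Sigma}} \times S^{1}$. You correctly isolate the only nontrivial input --- that every such diffeomorphism class is realized by exponents satisfying the stated inequality --- as coming wholesale from \cite{LiuSanoTasin}, which is exactly how the paper treats it.
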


Our next result generalizes some ideas introduced in  \cite[Theorem 6.4]{LiuSanoTasin}. 

\begin{theorem}
\label{T2proof}
Let $(Q,g_{SE})$ be a compact Sasaki-Einstein manifold, $\phi \colon Q \to Q$ a Sasaki automorphism, and $c > 0$. Then, the suspension $\Sigma_{\phi,c}(Q)$ by $(\phi,c)$ of $Q$ admits three different Hermitian metrics $\Omega_{i}$, $i = 1,2,3$, satisfying the following properties:
\begin{enumerate}
\item[{\rm{(1)}}] ${\rm{Ric}}^{(1)}(\Omega_{1}) = {\rm{Ric}}^{(1)}(\Omega_{2}) = {\rm{Ric}}^{(1)}(\Omega_{3}) \geq 0$;
\item[{\rm{(2)}}] $\Omega_{1}$ has {\bf{strictly positive}} Riemannian scalar curvature;
\item[{\rm{(3)}}] $\Omega_{2}$ has {\bf{zero}} Riemannian scalar curvature;
\item[{\rm{(4)}}] $\Omega_{3}$ has {\bf{strictly negative}} Riemannian scalar curvature.
\end{enumerate}
In particular, all compact Hermitian manifolds of the previous corollaries admit three different Hermitian metrics satisfying the above properties.
\end{theorem}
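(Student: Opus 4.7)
My plan is to take $\Omega_1, \Omega_2, \Omega_3$ from the one-parameter family
\[
\Omega_\zeta := \Omega - \zeta\, d(J\theta) = (1+\zeta)\omega_0 + \theta \wedge J\theta, \qquad \zeta > -1,
\]
with $\omega_0 := -d(J\theta)$, already introduced in the proof of Theorem \ref{mainproof}. The identity $\Omega_\zeta^n = (1+\zeta)^{n-1}\Omega^n$ established there shows that $\log\det(\Omega_\zeta)$ differs from $\log\det(\Omega)$ by a constant, so
\[
{\rm Ric}^{(1)}(\Omega_\zeta) = {\rm Ric}^{(1)}(\Omega) = \tfrac{n}{2}\,\omega_0
\]
is independent of $\zeta$; and since $\omega_0$ pulls back from the transverse K\"{a}hler form of the Reeb foliation on the Sasaki-Einstein base $Q$, it is semipositive as a $(1,1)$-form, so condition (1) holds for every choice of $\zeta_i > -1$.

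The heart of the argument is to show that ${\rm scal}(g_\zeta)$ is an explicit rational function of $\zeta$ that changes sign inside $(-1, \infty)$. I would evaluate each piece of Eq.~(\ref{scalrelation}). The Chern scalar is $s_C(g_\zeta) = \tfrac{n}{2}\operatorname{tr}_{g_\zeta}(\omega_0) = \frac{n(n-1)}{2(1+\zeta)}$, since the transverse block of $g_\zeta$ is rescaled by $(1+\zeta)$. Combining $\partial\partial^*\Omega_\zeta + \bar\partial\bar\partial^*\Omega_\zeta = \frac{n-1}{1+\zeta}\omega_0$ (shown in the proof of Theorem \ref{mainproof}) with $\tau_\zeta = (n-1)\theta_\zeta^{1,0}$ and the fact that the vertical block $\theta \wedge J\theta$ of $\Omega_\zeta$ is $\zeta$-independent (so $|\theta|^2_{g_\zeta} = 1$), the middle bracket of (\ref{scalrelation}) vanishes identically. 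For $|\mathcal{T}_\nabla|^2$ one expands
\[
\partial\bar\partial\Omega_\zeta \;=\; \tfrac{\sqrt{-1}}{2(1+\zeta)}\,\omega_0 \wedge \Omega_\zeta \;+\; \tfrac{1}{(1+\zeta)^2}\,\theta^{1,0}\wedge\theta^{0,1}\wedge\Omega_\zeta
\]
from $d\Omega_\zeta = \theta_\zeta \wedge \Omega_\zeta$, applies the Lefschetz identity $\Lambda(\alpha \wedge \Omega_\zeta) = (\operatorname{tr}_{g_\zeta}\alpha)\,\Omega_\zeta + (n-2)\alpha$ for $(1,1)$-forms $\alpha$ to each summand, and traces the result against $\Omega_\zeta$ using $\operatorname{tr}_{g_\zeta}(\omega_0) = (n-1)/(1+\zeta)$ and $\operatorname{tr}_{g_\zeta}(\sqrt{-1}\,\theta^{1,0}\wedge\theta^{0,1}) = \tfrac{1}{2}$. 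Assembling the pieces into (\ref{scalrelation}) collapses everything to
\[
{\rm scal}(g_\zeta) \;=\; \frac{(n-1)\bigl(2n(1+\zeta) - 1\bigr)}{2(1+\zeta)^2}.
\]

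This rational function has a unique zero at $\zeta_\ast := \frac{1-2n}{2n} \in (-1, 0)$ for every $n \geq 2$, is strictly positive on $(\zeta_\ast, \infty)$, and strictly negative on $(-1, \zeta_\ast)$. I then set $\zeta_1 := 0$, $\zeta_2 := \zeta_\ast$, pick any $\zeta_3 \in (-1, \zeta_\ast)$, and define $\Omega_i := \Omega_{\zeta_i}$. The main obstacle is the Lefschetz-based reduction of $\sqrt{-1}\Lambda(\partial\bar\partial\Omega_\zeta)$ to explicit traces: one must track three separate contributions (from $\Omega_\zeta$, $\omega_0$, and $\sqrt{-1}\theta^{1,0}\wedge\theta^{0,1}$) and verify that they combine to yield the clean factor $(n-1)/(1+\zeta)^2$ in $|\mathcal{T}_\nabla|^2$; once that cancellation is executed, conditions (2)-(4) follow immediately from the range analysis of the rational function above.
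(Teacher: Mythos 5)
Your proposal is correct and follows essentially the same route as the paper: the same family $\Omega_{\zeta}=\Omega-\zeta\,{\rm{d}}(J\theta)$, the same use of Eq.~(\ref{scalrelation}) with the middle bracket vanishing and $|\mathcal{T}_{\nabla}|^{2}=(n-1)/(\zeta+1)^{2}$, and the same rational function ${\rm{scal}}(g_{\zeta})=\frac{(n-1)(2n(1+\zeta)-1)}{2(1+\zeta)^{2}}$ with sign change at $\zeta_{\ast}=\frac{1-2n}{2n}$. The only difference is cosmetic: you evaluate the inner products via Lefschetz trace identities for $\Lambda$, while the paper wedges against $\ast\Omega_{\zeta}^{k}$ and compares top-degree forms.
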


\begin{proof}
For every $\zeta > -1$, let $(\Sigma_{\phi,c}(Q),\Omega_{\zeta},J)$ be the Hermitian compact manifold constructed as in the proof of Theorem \ref{mainproof}. It follows from Eq. (\ref{scalrelation}) that the Riemannian scalar curvature of the underlying Riemannian metric $g_{\zeta} = \Omega_{\zeta}(\mathbbm{1} \otimes J)$ is given by 
\begin{equation}
\label{Rscal}
{\rm{scal}}(g_{\zeta}) = 2s_{C}(g_{\zeta}) + \Big ( \big \langle \partial \partial^{\ast}\Omega_{\zeta} +  \bar{\partial} \bar{\partial}^{\ast}\Omega_{\zeta}, \Omega_{\zeta} \big \rangle - 2|\partial^{\ast}\Omega_{\zeta}|^{2}\Big) - \frac{1}{2}|\mathcal{T}_{\nabla}|^{2},
\end{equation}
where $|\mathcal{T}_{\nabla}|^{2} =  \big \langle \sqrt{-1}\Lambda(\partial \bar{\partial}\Omega_{\zeta}),\Omega_{\zeta} \big \rangle + \big \langle \partial \partial^{\ast}\Omega_{\zeta} +  \bar{\partial} \bar{\partial}^{\ast}\Omega_{\zeta}, \Omega_{\zeta} \big \rangle$. Since we have
\begin{equation}
\Omega_{\zeta} = -(1+\zeta){\rm{d}}(J\theta) + \theta \wedge J\theta, \ \ \ {\rm{d}}\Omega_{\zeta} = \theta_{\zeta} \wedge \Omega_{\zeta},
\end{equation}
such that $\theta_{\zeta} = \frac{1}{1+\zeta}\theta$, in order to compute ${\rm{scal}}(g_{\zeta})$, it will be useful to consider the following identities:
\begin{enumerate}
\item[(A)] $\displaystyle {\rm{vol}} := \frac{1}{n!}\Omega_{\zeta}^{n} = \frac{(-1)^{n-1}(\zeta+1)^{n-1}}{(n-1)!}({\rm{d}}(J\theta))^{n-1} \wedge \theta \wedge J\theta$;
\item[(B)] ${\rm{d}}(J\theta) \wedge \Omega_{\zeta}^{n-1} = (-1)^{n-2}(n-1)(\zeta + 1)^{n-2}({\rm{d}}(J\theta))^{n-1} \wedge \theta \wedge J\theta$;
\item[(C)] $\theta  \wedge J\theta \wedge \Omega_{\zeta}^{n-1} = (-1)^{n-1}(\zeta + 1)^{n-1} ({\rm{d}}(J\theta))^{n-1} \wedge \theta \wedge J\theta.$
\end{enumerate}
Notice that, from (A), (B), and (C), it follows that
\begin{equation}
\label{fundrelations}
{\rm{d}}(J\theta) \wedge \Omega_{\zeta}^{n-1} = -(n-1)!\bigg(\frac{n-1}{\zeta+1}\bigg){\rm{vol}} \ \ \ {\text{and}} \ \ \ \theta  \wedge J\theta \wedge \Omega_{\zeta}^{n-1} = (n-1)!{\rm{vol}}.
\end{equation}
Since $\partial \partial^{\ast}\Omega_{\zeta} +  \bar{\partial} \bar{\partial}^{\ast}\Omega_{\zeta} = -(n-1){\rm{d}}(J\theta_{\zeta})$, it follows that  
\begin{equation}
 \big \langle \partial \partial^{\ast}\Omega_{\zeta} +  \bar{\partial} \bar{\partial}^{\ast}\Omega_{\zeta}, \Omega_{\zeta} \big \rangle {\rm{vol}} =  -(n-1){\rm{d}}(J\theta_{\zeta}) \wedge \ast \Omega_{\zeta} = -\bigg (\frac{n-1}{\zeta + 1}\bigg ){\rm{d}}(J\theta) \wedge \ast \Omega_{\zeta}.
\end{equation}
As $\ast \Omega_{\zeta} = \frac{\Omega_{\zeta}^{n-1}}{(n-1)!}$, it follow from Eq. (\ref{fundrelations}) that 
\begin{equation}
\label{ID0}
-\bigg (\frac{n-1}{\zeta + 1}\bigg ){\rm{d}}(J\theta) \wedge \ast \Omega_{\zeta} =  \bigg ( \frac{n-1}{\zeta+1} \bigg )^{2}{\rm{vol}}.
\end{equation}
Therefore, we conclude that 
\begin{equation}
\label{ID1}
 \big \langle \partial \partial^{\ast}\Omega_{\zeta} +  \bar{\partial} \bar{\partial}^{\ast}\Omega_{\zeta}, \Omega_{\zeta} \big \rangle = \bigg ( \frac{n-1}{\zeta+1} \bigg )^{2}. 
\end{equation}
Now we observe that $|\partial^{\ast}\Omega_{\zeta}|^{2} = \big \langle \partial^{\ast}\Omega_{\zeta},\partial^{\ast}\Omega_{\zeta}\big \rangle =  \big \langle \partial \partial^{\ast}\Omega_{\zeta},\Omega_{\zeta}\big \rangle$. Since
\begin{equation}
\partial \partial^{\ast}\Omega_{\zeta} = -\frac{(n-1)}{2}{\rm{d}}(J\theta_{\zeta}) = -\frac{1}{2}\bigg (\frac{n-1}{\zeta + 1}\bigg ){\rm{d}}(J\theta),
\end{equation}
from a similar argument as in Eq. (\ref{ID0}), we have 
\begin{equation}
\big \langle \partial^{\ast}\Omega_{\zeta},\partial^{\ast}\Omega_{\zeta}\big \rangle {\rm{vol}} = - \frac{1}{2}\bigg (\frac{n-1}{\zeta + 1}\bigg ){\rm{d}}(J\theta) \wedge \ast \Omega_{\zeta} = \frac{1}{2}\bigg (\frac{n-1}{\zeta + 1}\bigg )^{2}{\rm{vol}}.
\end{equation}
Hence, we obtain 
\begin{equation}
\label{ID2}
|\partial^{\ast}\Omega_{\zeta}|^{2} = \frac{1}{2}\bigg (\frac{n-1}{\zeta + 1}\bigg )^{2}.
\end{equation}
Replacing Eq. (\ref{ID1}) and Eq. (\ref{ID2}) in Eq. (\ref{Rscal}), we obtain
\begin{equation}
{\rm{scal}}(g_{\zeta}) = 2s_{C}(g_{\zeta}) - \frac{1}{2}|\mathcal{T}_{\nabla}|^{2}.
\end{equation}
Since ${\rm{Ric}}^{(1)}(\Omega_{\zeta}) = - \frac{n}{2}{\rm{d}}(J\theta)$, it follows that 
\begin{equation}
 \frac{s_{C}(g_{\zeta})}{n} \Omega_{\zeta}^{n} = {\rm{Ric}}^{(1)}(\Omega_{\zeta}) \wedge \Omega_{\zeta}^{n-1} = - \frac{n}{2}{\rm{d}}(J\theta) \wedge \Omega_{\zeta}^{n-1}.  
\end{equation}
From Eq. (\ref{fundrelations}), we obtain 
\begin{equation}
\label{Cscalar}
 \frac{s_{C}(g_{\zeta})}{n}\Omega_{\zeta}^{n} = \frac{n!}{2}\bigg(\frac{n-1}{\zeta+1}\bigg){\rm{vol}} \iff s_{C}(g_{\zeta}) = \frac{n(n-1)}{2(\zeta + 1)}.
\end{equation}
In order to describe ${\rm{scal}}(g_{\zeta})$, it remains to compute $\frac{1}{2}|\mathcal{T}_{\nabla}|^{2}$. From Eq. (\ref{ID1}), we have 
\begin{equation}
|\mathcal{T}_{\nabla}|^{2} = \big \langle \sqrt{-1}\Lambda(\partial \bar{\partial}\Omega_{\zeta}),\Omega_{\zeta} \big \rangle + \bigg ( \frac{n-1}{\zeta+1} \bigg )^{2}.
\end{equation}
We notice that
\begin{equation}
\big \langle \sqrt{-1}\Lambda(\partial \bar{\partial}\Omega_{\zeta}),\Omega_{\zeta} \big \rangle = \big \langle \sqrt{-1}\partial \bar{\partial}\Omega_{\zeta},{\rm{L}}(\Omega_{\zeta}) \big \rangle = \big \langle \sqrt{-1}\partial \bar{\partial}\Omega_{\zeta},\Omega_{\zeta}^{2} \big \rangle.
\end{equation}
By using that ${\rm{d}}\Omega_{\zeta} = \theta_{\zeta} \wedge \Omega_{\zeta}$ and ${\rm{d}}\theta_{\zeta} = 0$, we obtain 
\begin{equation}
\sqrt{-1}\partial \bar{\partial}\Omega_{\zeta} = \frac{1}{2}{\rm{d}}(J \theta_{\zeta}) \wedge \Omega_{\zeta} + \frac{1}{2}\theta_{\zeta} \wedge J\theta_{\zeta} \wedge \Omega_{\zeta}.
\end{equation}
From above, by using that $ \ast \Omega_{\zeta}^{2} = \frac{2\Omega_{\zeta}^{n-2}}{(n-2)!}$, it follows that: 
\begin{enumerate}
\item[(i)] $\displaystyle \frac{1}{2}{\rm{d}}(J \theta_{\zeta}) \wedge \Omega_{\zeta} \wedge \ast \Omega_{\zeta}^{2} = \frac{{\rm{d}}(J \theta) \wedge \Omega_{\zeta}^{n-1}}{(n-2)!(\zeta+1)} = -  \bigg ( \frac{n-1}{\zeta+1} \bigg )^{2}{\rm{vol}}$;
\item[(ii)] $\displaystyle \frac{1}{2}\theta_{\zeta} \wedge J\theta_{\zeta} \wedge \Omega_{\zeta} \wedge \ast \Omega_{\zeta}^{2} = \frac{\theta  \wedge J\theta \wedge \Omega_{\zeta}^{n-1}}{(n-2)!(\zeta+1)^{2}} = \frac{(n-1)}{(\zeta+1)^{2}}{\rm{vol}}$.
\end{enumerate}
Here we have used Eq. (\ref{fundrelations}) to obtain the above expressions. Hence, we conclude that  
\begin{equation}
\big \langle \sqrt{-1}\Lambda(\partial \bar{\partial}\Omega_{\zeta}),\Omega_{\zeta} \big \rangle = \big \langle \sqrt{-1}\partial \bar{\partial}\Omega_{\zeta},\Omega_{\zeta}^{2} \big \rangle =   -  \bigg ( \frac{n-1}{\zeta+1} \bigg )^{2} + \frac{(n-1)}{(\zeta+1)^{2}}.
\end{equation}
Therefore, we have 
\begin{equation}
|\mathcal{T}_{\nabla}|^{2} = \frac{(n-1)}{(\zeta+1)^{2}}.
\end{equation}
Combining the above expression with Eq. (\ref{Cscalar}), we obtain
\begin{equation}
\label{scalarfunction}
{\rm{scal}}(g_{\zeta}) = \frac{n(n-1)}{(\zeta + 1)} - \frac{(n-1)}{2(\zeta+1)^{2}} = \frac{n(n-1)}{(\zeta + 1)^{2}} \bigg [ \zeta - \frac{(1-2n)}{2n} \bigg ].
\end{equation}
From this, we have the following:
\begin{enumerate}
\item[(1)] $\zeta > \frac{(1-2n)}{2n} \Rightarrow \Omega_{\zeta}$ has strictly positive constant Riemannian scalar curvature;
\item[(2)] $\zeta = \frac{(1-2n)}{2n}  \Rightarrow \Omega_{\zeta}$ has constant zero Riemannian scalar curvature;
\item[(3)] $\zeta < \frac{(1-2n)}{2n} \Rightarrow \Omega_{\zeta}$ has strictly negative constant Riemannian scalar curvature.
\end{enumerate}
Hence, since ${\rm{Ric}}^{(1)}(\Omega_{\zeta}) = - \frac{n}{2}{\rm{d}}(J\theta) \geq 0$, $\forall \zeta > -1$, see Remark \ref{Vaismandesc}, we can always find three different Hermitian metrics $\Omega_{i}$, $i = 1,2,3$, satisfying the desired properties. The last statement of the theorem follows immediately from the above construction, and from the fact that all compact Hermitian manifolds mentioned can be obtained as a suspension $\Sigma_{\phi,c}(Q)$ of some suitable Sasaki-Einstein manifold $(Q,g_{Q})$.
\end{proof}

\section{Examples}

In this section, in order to illustrate the main results, we provide a general method to construct explicit examples of Levi-Civita Ricci-flat Hermitian metric from K\"{a}hler-Einstein Fano (orbifolds) manifolds. Also, we illustrate the results of Theorem \ref{T2proof} in the case that $Q$ is an exotic $7$-sphere.

\begin{example} 
\label{fundexample}
Let $(X,\omega_{X},J)$ be a K\"{a}hler-Einstein Fano manifold of complex dimension $n$ and Fano index ${\bf{I}}(X) \in \mathbbm{Z}$. Suppose that ${\rm{Ric}}(\omega_{X}) = \lambda\omega_{X}$, for some $\lambda > 0$. Considering ${\bf{L}} = {\bf{K}}_{X}^{\otimes \frac{\ell}{{\bf{I}}(X)}}$, for some $\ell \in \mathbbm{Z}_{>0}$, and let ${\bf{H}}$ be a Hermitian structure on ${\bf{L}}$, such that  
\begin{equation}
\label{curvaturerelation}
\frac{\sqrt{-1}}{2\pi}{\bf{\Theta}}(\nabla) = - \frac{\ell \lambda \omega_{X}}{2\pi {\bf{I}}(X)},
\end{equation}
where ${\bf{\Theta}}(\nabla)$ is the curvature of the associated Chern connection $\nabla  = {\rm{d}} + {\rm{d}}\log({\bf{H}})$. Considering the complex manifold  ${\rm{Tot}}({\bf{L}}^{\times})$ underlying the total space of the principal $\mathbbm{C}^{\times}$-bundle  ${\bf{L}}^{\times}:= {\bf{L}} - \{0{\text{-section}}\}$, let $r \colon {\rm{Tot}}({\bf{L}}^{\times}) \to \mathbbm{R}$, such that $r^{2} = {\bf{H}}(u,u)$, $\forall u \in {\rm{Tot}}(\bf{L}^{\times})$. Denoting by $\mathscr{J}$ the canonical complex structure on ${\rm{Tot}}({\bf{L}}^{\times})$, we have
\begin{equation}
\label{localform}
{\rm{d}}{\rm{d}}^{c}\log(r) = -p^{\ast}(\sqrt{-1}{\bf{\Theta}}(\nabla)) = p^{\ast} \bigg ( \frac{\ell \lambda \omega_{X}}{ {\bf{I}}(X)}\bigg),
\end{equation}
where $p \colon {\rm{Tot}}({\bf{L}}^{\times}) \to X$ is the associated bundle projection and ${\rm{d}}^{c} = \mathscr{J} \circ {\rm{d}}$. By considering the sphere bundle $Q({\bf{L}}) = \big \{u \in {\bf{L}} \ \big | \sqrt{{\bf{H}}(u,u)} = 1\big \}$ we have an identification $ {\rm{Tot}}({\bf{L}}^{\times}) \cong Q({\bf{L}}) \times \mathbbm{R}_{+}$ provide by the map
\begin{equation}
u \mapsto \Bigg ( \frac{u}{\sqrt{{\bf{H}}(u,u)}},\sqrt{{\bf{H}}(u,u)}\Bigg).
\end{equation}
Under the above identification, by considering the rescaled potential $\varrho = r^{\frac{{\bf{I}}(X)}{\ell(n+1)}}$ we have a K\"{a}hler structure $\omega_{CY}$ on $\mathcal{C}(Q({\bf{L}}))$ defined by
\begin{equation}
\omega_{CY} := \frac{1}{4}{\rm{d}}{\rm{d}}^{c}\varrho^{2} = \frac{{\bf{I}}(X)}{\ell(n+1)}{\rm{d}}\Big ( \frac{\varrho^{2}\eta}{2}\Big),
\end{equation}
such that $\eta = {\rm{d}}^{c} \log(r)$. We notice that 
\begin{center}
$\displaystyle{{\rm{d}}\log(\varrho)= \frac{{\bf{I}}(X)}{\ell(n+1)} {\rm{d}}\log(r) \Rightarrow \mathscr{J}\Big (\frac{{\rm{d}}\varrho}{\varrho}\Big) = \frac{{\bf{I}}(X)}{\ell(n+1)}\eta}$.
\end{center}
Thus, a straightforward computation shows that 
\begin{equation}
\label{CYcone}
g_{CY} = \omega_{CY}(\mathbbm{1} \otimes \mathscr{J}) = \varrho^{2}g_{SE} + {\rm{d}}\varrho \otimes {\rm{d}}\varrho,
\end{equation}
such that $g_{SE}$ is a Riemannian metric on $Q({\bf{L}})$ defined by 
\begin{equation}
g_{SE} =  \pi^{\ast}(g_{KE}) + \frac{{\bf{I}}(X)^{2}}{\ell^{2}(n+1)^{2}}\eta \otimes \eta,
\end{equation}
where $g_{KE} = \frac{\lambda}{2(n+1)}\omega_{X}(\mathbbm{1} \otimes J)$ and $\pi \colon Q({\bf{L}}) \to X$ is the associated projection. By construction, we have
\begin{center}
${\rm{Ric}}(g_{KE}) = 2(n+1)g_{KE} \iff {\rm{Ric}}(g_{SE}) = 2ng_{SE} \iff {\rm{Ric}}(g_{CY}) = 0$.
\end{center}
Therefore, we have that $(Q({\bf{L}}),g_{SE})$ is a regular Sasaki-Einstein manifold and its metric cone $(\mathcal{C}(Q({\bf{L}})),\omega_{CY})$ is Calabi-Yau. From this, given a Sasaki automorphism $\phi \colon Q({\bf{L}}) \to Q({\bf{L}})$, and $c > 0$, it follows from Theorem \ref{mainproof} that the suspension $\Sigma_{\phi,c}(Q({\bf{L}}))$ by $(\phi,c)$ of $Q({\bf{L}})$ admits a Levi-Civita Ricci-flat Hermitian metric $\Omega$ given by
\begin{center}
$\Omega = -\big (1-\frac{1}{n} \big){\rm{d}}(J\theta) + \theta \wedge J\theta$,
\end{center}
such that 
\begin{equation}
\theta \myeq -{\rm{d}}\log(\varrho^{2}) = -\frac{{\bf{I}}(X)}{\ell(m+1)}{\rm{d}}\log(r^{2}) = -\frac{{\bf{I}}(X)}{\ell(m+1)}{\rm{d}}\log({\bf{H}}). 
\end{equation}
Locally, we have ${\bf{H}} = h|w|^{2}$, such that $h \in C^{\infty}(U)$, for some open set $U \subset X$ which trivializes ${\bf{L}}$, satisfying $\frac{1}{2}{\rm{d}}{\rm{d}}^{c}\log(h) \myeq \frac{\ell \lambda \omega_{X}}{ {\bf{I}}(X)}$, see Eq. (\ref{localform}). Therefore, we conclude that 
\begin{equation}
\theta \myeq -\frac{{\bf{I}}(X)}{\ell(m+1)}\bigg [{\rm{d}}\log(h) + \frac{\bar{w}{\rm{d}}w +w{\rm{d}}\bar{w} }{|w|^{2}}\bigg]. 
\end{equation}
Summarizing, the Levi-Civita Ricci-flat metric $\Omega$ can be explicitly described in terms of the local potentials of the Chern connection $\nabla$ on ${\bf{L}}$ which satisfies Eq. (\ref{curvaturerelation}).
\end{example}

\begin{remark}
\label{remarkdeformcpx}
An alternative way to equip $\mathcal{C}(Q({\bf{L}}))$ with a Calabi-Yau metric is the following. In the setting of the previous example, consider $(Q({\bf{L}}),g)$, such that 
\begin{equation}
g=  \pi^{\ast} \bigg ( \frac{\ell \lambda g_{KE}}{ 2{\bf{I}}(X)}\bigg) + \eta \otimes \eta,
\end{equation}
where  $g_{KE} = \omega_{X}(\mathbbm{1} \otimes J)$. In this case, we have that $(Q({\bf{L}}),g)$ is a Sasakian manifold, and its Sasakian structure $\mathcal{S} = (\xi,\eta, \Phi,g)$ can be obtained from the K\"{a}hler cone $(\mathcal{C}(Q({\bf{L}})), \omega_{\mathcal{C}} = {\rm{d}} \big ( \frac{r^{2}\eta}{2}\big),\mathscr{J})$. Given $a > 0$, consider $\mathcal{S}_{a} =  (\frac{1}{a}\xi,a\eta, \Phi,g_{a})$, such that 
\begin{center}
$g_{a} = ag + (a^{2} - a)\eta \otimes \eta$.
\end{center}
By construction, we have that $(Q({\bf{L}}),g_{a})$ is a Riemannian manifold. From $\mathcal{S}_{a}$ we can construct a complex structure $\mathcal{J}_{a}$ on $\mathcal{C}(Q({\bf{L}})$ by setting 
\begin{center}
$\mathcal{J}_{a}(Y)= \phi(Y) - a\eta(Y)r\partial_{r}, \ \ \ \ \ \mathcal{J}_{a}( r\partial_{r}) = \frac{1}{a}\xi,$
\end{center}
$\forall Y \in \mathfrak{X}(Q({\bf{L}}))$. It follows that $(\mathcal{C}(Q({\bf{L}})), g_{\mathcal{C},a}:= r^{2}g_{a} + {\rm{d}}r\otimes {\rm{d}}r,\mathcal{J}_{a})$ is a K\"{a}hler manifold. Notice that $\omega_{\mathcal{C},a} = g_{\mathcal{C},a}(\mathcal{J}_{a} \otimes \mathbbm{1})$ is given by
\begin{equation}
\omega_{\mathcal{C},a} = \frac{1}{4}{\rm{d}}{\rm{d}}^{c}r^{2} = a{\rm{d}}\Big ( \frac{r^{2}\eta}{2}\Big ),
\end{equation}
such that ${\rm{d}}^{c}:= \mathcal{J}_{a} \circ {\rm{d}}$. Thus, we have that $(Q({\bf{L}}),g_{a})$ is also Sasakian. In particular, we have the following:
\begin{equation}
a =  \frac{{\bf{I}}(X)}{\ell(n+1)} \Longrightarrow g_{a} = g_{SE} \Longrightarrow {\rm{Ric}}(g_{\mathcal{C},a}) = 0,
\end{equation}
i.e.,  $a =  \frac{{\bf{I}}(X)}{\ell(n+1)} \Longrightarrow (\mathcal{C}(Q({\bf{L}})), g_{\mathcal{C},a}:= r^{2}g_{a} + {\rm{d}}r\otimes {\rm{d}}r,\mathcal{J}_{a})$ is Calabi-Yau. In this case, from Theorem \ref{mainproof}, we obtain a Levi-Civita Ricci-flat Hermitian manifold $(\Sigma_{\phi,c}(Q({\bf{L}})),\Omega,J_{a})$, here the complex structure $J_{a}$ is obtained from $\mathcal{J}_{a}$, for $a =  \frac{{\bf{I}}(X)}{\ell(n+1)}$. In particular, in this case we have the underlying Lee form given by $\theta \myeq -{\rm{d}}\log(r^{2})$.
\end{remark}

\begin{remark}
By following the results given in \cite{Correa} on principal elliptic bundles over flag varieties, and the construction provided by Example \ref{fundexample} (or by Remark \ref{remarkdeformcpx}), one can describe a huge class of explicit examples which illustrate the results of Theorem \ref{mainproof} and Theorem \ref{T2proof} by using tools of Lie theory. 
\end{remark}

\begin{example} Given a $(n+1)$-tuple ${\bf{a}} = (a_{0},\ldots,a_{n})$, such that $a_{j} \in \mathbb{Z}_{>0}$, $a_{j} > 1$, and $n \geq 3$, consider the Brieskorn-Pham singularity (e.g. \cite{Brieskorn1}, \cite{Pham}) given by
\begin{equation}
Y({\bf{a}}) := \Big (  z_{0}^{a_{0}} + \cdots + z_{n}^{a_{n}} = 0 \Big) \subset \mathbbm{C}^{n+1}.
\end{equation}
Let us assume that $a_{0} \leq \cdots \leq a_{n}$. Set $C = {\rm{lcm}}(a_{j}: j = 0,\ldots,n)$ and consider the $\mathbbm{C}^{\times}$-action on $Y({\bf{a}})$ defined by
\begin{equation}
\label{waction}
\lambda \cdot (z_{0},\ldots,z_{n}) = (\lambda^{\frac{C}{a_{0}}}z_{0},\ldots, \lambda^{\frac{C}{a_{n}}}z_{n}), \ \ \forall \lambda \in \mathbbm{C}^{\times}. 
\end{equation}
From above, we have the following facts \cite{Einsteinsphere}:
\begin{enumerate}
\item $X^{{\rm{orb}}}({\bf{a}}) := Y({\bf{a}})/\mathbbm{C}^{\times}$ is a K\"{a}hler orbifold;
\item $X^{{\rm{orb}}}({\bf{a}})$ is a Fano orbifold if and only if $1 < \sum_{j = 0}^{n}\frac{1}{a_{j}}$.
\end{enumerate}
Consider now the Brieskorn manifold (Figure \ref{link})
\begin{equation}
\label{Brieskornlink}
{\rm{L}}({\bf{a}}) := Y({\bf{a}}) \cap S^{2n+1}.
\end{equation}
The Brieskorn manifold ${\rm{L}}({\bf{a}})$ is a smooth $(2n-1)$-dimensional compact manifold. Moreover, by following \cite{Abe}, \cite{Takao}, and \cite{BoyerGalickifive}, we have a well-known quasi-regular Sasakian structure on ${\rm{L}}({\bf{a}})$ naturally obtained from a weighted Sasakian structure of $S^{2n+1}$. 
\begin{figure}[H]
\centering
\includegraphics[scale = .35]{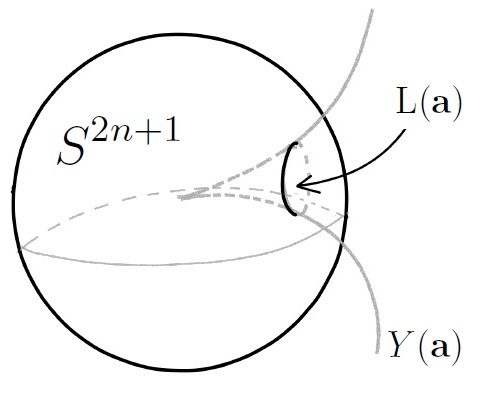}
\caption{Link of isolated singularity which represents a Brieskorn manifold ${\rm{L}}({\bf{a}})$.}
\label{link}
\end{figure}
In this setting, we have the following commutative diagram: 
\begin{center}
\begin{tikzcd}
 {\rm{L}}({\bf{a}})\arrow[d] \arrow[r,hook] & S^{2n+1} \arrow[d] \\
 X^{{\rm{orb}}}({\bf{a}}) \arrow[r,hook] & \mathbbm{P}({\bf{w}})
\end{tikzcd}
\end{center}
where $\mathbbm{P}({\bf{w}}) = (\mathbbm{C}^{n+1} - \{0\})/\mathbbm{C}^{\times}$ is the weighted projective space defined by the action described in Eq. \ref{waction}, here we denote ${\bf{w}} = (\frac{C}{a_{0}},\ldots,\frac{C}{a_{n}})$. In the above diagram the horizontal arrows are Sasakian and K\"{a}hlerian embeddings, respectively, and the vertical arrows are principal $S^{1}$ V-bundles and orbifold Riemannian submersions. Moreover, ${\rm{L}}({\bf{a}})$ is the total space of principal $S^{1}$ V-bundle over the orbifold $X^{{\rm{orb}}}({\bf{a}})$ whose the first Chern class in $H^{2}_{orb}(X^{{\rm{orb}}}({\bf{a}}),\mathbbm{Z})$ is $\frac{1}{I}c_{1}(X^{{\rm{orb}}}({\bf{a}}))$, where $I$ is the Fano index of $X^{{\rm{orb}}}({\bf{a}})$, see for instance \cite{BoyerGalickifive}. Thus, in a similar way as in the previous example, we have that ${\rm{L}}({\bf{a}})$ admits a Sasaki-Einstein metric if and only if the orbifold $X^{{\rm{orb}}}({\bf{a}})$ admits a K\"{a}hler-Einstein orbifold metric of scalar curvature $4n(n+1)$. From the Lichnerowicz obstruction \cite[Eq. (3.23)]{Lobistruction} and the recent result provided in \cite{LiuSanoTasin}, see also \cite[Conjecture 4]{Einsteinsphere}, it follows that ${\rm{L}}({\bf{a}})$ admits a Sasaki-Einstein metric if 
\begin{equation}
1 < \sum_{j = 0}^{n}\frac{1}{a_{j}} < 1 + \frac{n}{a_{n}}.
\end{equation}
In this case, we have from Theorem \ref{mainproof} (or Corollary \ref{LCRFBP}) that ${\rm{L}}({\bf{a}}) \times S^{1}$ admits a Levi-Civita Ricci-Flat Hermitian metric. Also, from Theorem \ref{T2proof}, we obtain a huge class of new examples of complex manifolds with nonnegative first Chern class that admit constant strictly negative Riemannian scalar curvature.
\end{example}

\begin{example}
\label{7spheresexample}
The construction presented in the last example above plays an important role in the study homotopy spheres and exotic spheres. Being more precise, in \cite{Hirzebruch} Hirzebruch showed that links as in Eq. (\ref{Brieskornlink}) can sometimes be homeomorphic, but not diffeomorphic to standard spheres. As shown by Egbert V. Brieskorn in \cite{Brieskorn2}, links of the form
\begin{equation}
\label{exotic7}
{\rm{L}}(2,2,2,3,6k-1) = \Big ( z_{0}^{2} + z_{1}^{2} + z_{2}^{2} + z_{3}^{3} + z_{4}^{6k-1} = 0 \Big) \cap S^{9},
\end{equation}
with $k = 1, \ldots,28$, realize explicitly all the distinct smooth structures on the $7$-sphere $S^{7}$ classified by Kervaire and Milnor in \cite{Kervaire}, see also \cite{Milnor}. Denoting by ${\bf{\Sigma}}^{7}$ any one of the 28 homotopy $7$-sphere, it follows from Theorem \ref{mainproof} (or Corollary \ref{exoticsphereLCRF}) that ${\bf{\Sigma}}^{7} \times S^{1}$ admits a Levi-Civita Ricci-Flat Hermitian metric. Moreover, from Theorem \ref{T2proof}, we have a family of Hermitian metrics $g_{\zeta}$ on ${\bf{\Sigma}}^{7} \times S^{1}$, such that $\zeta  > -1$ (see Eq. \ref{Hermitianfamily}). In this case, it follows from Eq. (\ref{scalarfunction}) that
\begin{equation}
{\rm{scal}}(g_{\zeta}) = \frac{12}{(\zeta + 1)^{2}}\bigg ( \zeta + \frac{7}{8}\bigg), \ \ \ \ \ \ \forall \zeta >  -1.
\end{equation}
Thus, we have ${\rm{scal}}(g_{\zeta}) \in (-\infty,24]$. In particular, we obtain the following:
\begin{enumerate}
\item $\zeta > -\frac{7}{8} \Longrightarrow ({\bf{\Sigma}}^{7} \times S^{1},g_{\zeta},J)$ has constant strictly positive Riemannian scalar curvature;
\item $\zeta = -\frac{7}{8}\Longrightarrow ({\bf{\Sigma}}^{7} \times S^{1},g_{\zeta},J)$ has constant zero Riemannian scalar curvature;
\item $-1 < \zeta < -\frac{7}{8} \Longrightarrow ({\bf{\Sigma}}^{7} \times S^{1},g_{\zeta},J)$ has constant strictly negative Riemannian scalar curvature.
\end{enumerate}
Notice that ${\rm{scal}}(g_{\zeta}) \to -\infty$ as $\zeta \to -1$. According to  \cite{GHM}, the existence of an exotic $8$-sphere ${\bf{\Sigma}}^{8}$ implies the existence of 30 different differentiable structures on $S^{7} \times S^{1}$, i.e., besides those 28 obtained from ${\bf{\Sigma}}^{7} \times S^{1}$, we also have an additional one obtained from $({\bf{\Sigma}}^{7} \times S^{1}) \# {\bf{\Sigma}}^{8}$, where ${\bf{\Sigma}}^{7}$ is any one of the 28 homotopy $7$-spheres. It would be interesting to know whether the above results also hold for $({\bf{\Sigma}}^{7} \times S^{1}) \# {\bf{\Sigma}}^{8}$.
\end{example}

\begin{remark}
Another source of examples which illustrate the results of Theorem \ref{mainproof} and Theorem \ref{T2proof} is provided by manifolds of the form $\mathcal{M}_{k} \times S^{1}$, such that $\mathcal{M}_{k} = k\#(S^{2}\times S^{3})$, $k \geq 1$. Actually, it is known that $\mathcal{M}_{k}$, $k \geq 1$, admits infinitely many Sasaki-Einstein structures, see for instance \cite{BoyerGalicki}, \cite{Coevering}. In particular, from Theorem \ref{mainproof} one can always solve the equation $\mathfrak{R}^{(1)}(\Omega) = 0$ on $\mathcal{M}_{k} \times S^{1}$, $k \geq 1$.
\end{remark}

\appendix

\section{Remarks on Chern-Ricci flow}
\label{Appendix}
On a Hermitian manifold $(M,\Omega_{0},J)$, a solution of the Chern-Ricci flow starting at $\Omega_{0}$ is given by a smooth family of Hermitian metrics $\Omega = \Omega(t)$ satisfying 
\begin{equation}
\begin{cases}
\displaystyle \frac{\partial}{\partial t} \Omega = -{\rm{Ric}}^{(1)}(\Omega), \ \ 0 \leq t < T, \\
\Omega(0) = \Omega_{0},
\end{cases}
\end{equation}
for $T \in (0,\infty]$, see for instance \cite{Gill} and \cite{TosattiWeinkove}. Inspired by the results on Hopf manifolds provided in \cite{TosattiWeinkove}, \cite{GillSmith}, and \cite{TosattiWeinkove2}, we observe that the ideas involved in the proof of Theorem \ref{mainproof} can be used to obtain explicit solutions of the Chern-Ricci flow on compact Hermitian Weyl-Einstein manifolds. More precisely, we have the following result:

\begin{theorem}
\label{T3proof}
Let $(M,g,J)$ be a compact Hermitian Weyl-Einstein manifold, then there exists an explicit solution $g(t)$ of the Chern-Ricci flow on $M$ for $ t \in [0,\frac{2}{n})$, starting at $g$, satisfying the following properties:
\begin{enumerate}
\item ${\rm{Vol}}(M,g(t)) \to 0$ as $t \to \frac{2}{n}$ (i.e. $g(t)$ is finite-time collapsing);
\item $\lim_{t \to \frac{2}{n}}g(t) = h_{T}$, where $h_{T}$ is a nonnegative symmetric tensor on $M$;
\item The Chern scalar curvature of $g(t)$ blows up like $(n-1)/(\frac{2}{n}- t)$;
\item ${\rm{scal}}(g(t)) \to -\infty$ as $t \to \frac{2}{n}$;
\item $\lim_{t \to \frac{2}{n}}d_{GH}\big ((M,d_{t}),(S^{1},d_{S^{1}}) \big ) = 0$,
\end{enumerate}
where $d_{t}$ is the distance induced by $g(t)$ on $M$ and $d_{S^{1}}$ is the distance on the unit circle $S^{1}$ induced by a suitable scalar multiple of the standard Riemannian metric.
\end{theorem}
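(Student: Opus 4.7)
The plan is to carry the Vaisman ansatz used in the proofs of Theorem \ref{mainproof} and Theorem \ref{T2proof} over to the parabolic setting. By Theorem \ref{HEW} and Theorem \ref{isovaisman}, we may identify $(M,g,J)$ with $(\Sigma_{\phi,c}(Q),g_{0},J)$ for some compact Sasaki-Einstein manifold $(Q,g_{SE})$, and write the fundamental form as $\Omega=-{\rm d}(J\theta)+\theta\wedge J\theta$ with ${\rm Ric}^{(1)}(\Omega)=-\tfrac{n}{2}{\rm d}(J\theta)$, exactly as in the proof of Theorem \ref{mainproof}. Motivated by the one-parameter family $\Omega_{\zeta}=\Omega+2\zeta\mathfrak{R}^{(1)}(\Omega)$, I would seek a solution of the form
\begin{equation*}
\Omega(t)\;:=\;\Omega+2s(t)\,\mathfrak{R}^{(1)}(\Omega)\;=\;-(1+s(t))\,{\rm d}(J\theta)+\theta\wedge J\theta,\qquad s(0)=0,
\end{equation*}
which is Hermitian precisely when $1+s(t)>0$. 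Since $\Omega(t)^{n}=(1+s(t))^{n-1}\Omega^{n}$ and this conformal factor is spatially constant, a single application of $-\tfrac{1}{2}{\rm d}{\rm d}^{c}\log\det$ gives ${\rm Ric}^{(1)}(\Omega(t))={\rm Ric}^{(1)}(\Omega)=-\tfrac{n}{2}{\rm d}(J\theta)$. Matching against $\partial_{t}\Omega(t)=-s'(t){\rm d}(J\theta)$ reduces the Chern-Ricci flow to the ODE $s'(t)=-n/2$, so $s(t)=-nt/2$ and the solution is defined on $[0,2/n)$ as claimed.

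With this explicit solution in hand, items (1)--(4) follow from the algebraic identities already derived in the proof of Theorem \ref{T2proof}. The volume satisfies ${\rm Vol}(M,g(t))=(1-nt/2)^{n-1}{\rm Vol}(M,g)\to 0$, giving (1). The pointwise limit $\Omega(t)\to\theta\wedge J\theta$ translates through $g=\Omega(\mathbbm{1}\otimes J)$ into $h_{T}=\theta\otimes\theta+J\theta\otimes J\theta\geq 0$, giving (2). Using (\ref{Cscalar}) with $\zeta+1=1-nt/2=(n/2)(2/n-t)$ yields $s_{C}(g(t))=(n-1)/(2/n-t)$, which is (3). For (4) I would substitute $\zeta(t)=-nt/2$ into (\ref{scalarfunction}): the bracket $[\zeta - (1-2n)/(2n)]$ tends to $-1/(2n)<0$ while the prefactor $n(n-1)/(1-nt/2)^{2}$ blows up, forcing ${\rm scal}(g(t))\to-\infty$.

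The only genuinely geometric step is the Gromov-Hausdorff statement (5). The limiting tensor $h_{T}$ is supported on the $2$-plane spanned by the Lee field $B=\theta^{\sharp}$ and the Reeb-type field $JB$, so distances under $g(t)$ degenerate in every other direction. On each fiber $Q\times\{\varphi\}$ of the suspension, $\theta$ vanishes by construction, while the restriction of $J\theta$ is a positive multiple of the contact form $\eta$ of the Sasakian structure on $Q$, so the distribution along which $g(t)$ is shrinking is exactly the contact distribution $\mathcal{D}=\ker\eta$. Bracket-generation of $\mathcal{D}$ and a standard sub-Riemannian ball-box estimate for Carnot-Carath\'eodory distances on contact manifolds then give $\operatorname{diam}(Q\times\{\varphi\},g(t))=O((1-nt/2)^{1/2})\to 0$, uniformly in $\varphi$. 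Meanwhile, the $B$-direction projects onto the suspension base $[0,\log c]/\!\sim$ with length controlled by the $\theta\otimes\theta$ component of $h_{T}$, producing a circle $(S^{1},d_{S^{1}})$ for the appropriate scaling factor named in the statement. A standard $\varepsilon$-net argument then combines the uniform fiber-collapse with the uniform base-control to conclude (5).

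The main obstacle is precisely this last quantitative collapse step: one must produce horizontal curves of length $O((1-nt/2)^{1/2})$ joining arbitrary pairs of points in a fiber $Q$, with constants uniform in both the basepoint and in $t$. The contact/Sasakian geometry of $Q$ makes the correct ball-box estimate available in principle, since horizontal distances scale like the square root of the transverse metric rather than its first power, but verifying this uniformly for the degenerating family $g(t)|_{\mathcal{D}}=(1-nt/2)\,g^{T}$ and ruling out partial collapse to a higher-dimensional torus bundle over $S^{1}$ (rather than $S^{1}$ itself) is the technical heart of the argument.
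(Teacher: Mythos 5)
Your proposal is correct and follows essentially the same route as the paper: the same explicit solution $\Omega(t)=\Omega-t\,{\rm{Ric}}^{(1)}(\Omega)=-(1-\tfrac{n}{2}t){\rm{d}}(J\theta)+\theta\wedge J\theta$, the same scalar-curvature identities for items (1)--(4), and for item (5) the same mechanism of a Riemannian submersion onto $S^{1}$ with fibers collapsed along the bracket-generating contact distribution. The one step you flag as the technical heart is closed in the paper without any ball-box estimate: by Chow's theorem the Carnot--Carath\'eodory distance associated to $(g(0),\mathcal{D}_{S})$ is finite, hence bounded by a constant $C$ on the compact manifold, and since $g(t)|_{\mathcal{D}_{S}}=(1-\tfrac{n}{2}t)\,g(0)|_{\mathcal{D}_{S}}$ the $g(t)$-length of every horizontal path is rescaled by the global factor $(1-\tfrac{n}{2}t)^{1/2}$, giving $d_{t}(z,q)\leq C(1-\tfrac{n}{2}t)^{1/2}$ uniformly in the points and in $t$; combined with the exact control of the base direction coming from the explicit Riemannian submersion $F_{c}\colon M\to S^{1}$, this bounds the distortion of the correspondence $\{(p,F_{c}(p))\}$ and at the same time rules out the partial collapse you worry about.
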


\begin{remark}
In order to prove Theorem \ref{T3proof}, we proceed highlighting the background on Sasaki-Einstein geometry underlying all our main results. In fact, in what follows we shall prove the results of Theorem \ref{T3proof} for compact Hermitian manifolds as in the proof of Theorem \ref{mainproof}, i.e., for compact Hermitian manifolds of the form $(\Sigma_{\phi,c}(Q),g,J)$, such that $g$  is a Hermitian Weyl-Einstein metric induced by the Calabi-Yau structure of $\mathcal{C}(Q)$. We also include in the proof some comments relating our results with previous known results on Hopf manifolds. 
\end{remark}

Before we start the proof, we recall that the Gromov-Hausdorff distance of metric spaces can be defined as follows (see for instance \cite{Burago}). Given two metric spaces $(X,d_{X})$ and $(Y,d_{Y})$, a {\textit{correspondence}} between the underlying sets $X$ and $Y$ is a subset $R \subseteq  X \times Y$ satisfying the following property: for every $x \in X$ there exists at leas one $y \in Y$, such that $(x,y) \in X\times Y$, and similarly for every $y \in Y$ there exists an $x \in X$, such that $(x,y) \in X \times Y$. Let us denote by $\mathcal{R}(X,Y)$ the set of all correspondences between $X$ and $Y$. Now we consider the following definition
\begin{definition}
Let $R \in \mathcal{R}(X,Y)$ be a correspondence between two metric spaces $(X,d_{X})$ and $(Y,d_{Y})$. The \textit{distortion} of $R$ is defined by 
\begin{equation}
{\rm{dis}}(R) = \sup \Big  \{ |d_{X}(x,x') - d_{Y}(y,y')| \ \ \big | \ \ (x,y), (x',y') \in R \ \Big\}.
\end{equation}
\end{definition}

From above, we can define the Gromov-Hausdorff distance between two metric spaces as follows.

\begin{definition}
We define the Gromov-Hausdorff distance of any two metric spaces $(X,d_{X})$ and $(Y,d_{Y})$ as being 
\begin{equation}
\label{GHdistance}
d_{GH}\big ((X,d_{X}),(Y,d_{Y}) \big ) = \frac{1}{2}\inf \Big  \{ {\rm{dis}}(R) \ \ \Big | \ \ R  \in \mathcal{R}(X,Y) \Big\}.
\end{equation}
\end{definition}

Now we can prove Theorem \ref{T3proof}.

\begin{proof}(Theorem \ref{T3proof}) As we have seen, given a compact Hermitian Weyl-Einstein  manifold of the form $(\Sigma_{\phi,c}(Q),g,J)$, it follows that 
\begin{equation}
\Omega = -{\rm{d}}(J\theta) + \theta \wedge J\theta \ \ \ {\text{and}} \ \ \ {\rm{Ric}}^{(1)}(\Omega) = - \frac{n}{2}{\rm{d}}(J\theta).
\end{equation}
Therefore, we set $\Omega_{0} := \Omega$ and 
\begin{equation}
\label{flowsolution}
\Omega(t) := \Omega_{0} - t{\rm{Ric}}^{(1)}(\Omega_{0}) = -\Big (1-\frac{n}{2}t \Big){\rm{d}}(J\theta) + \theta \wedge J\theta.
\end{equation}
Since $-{\rm{d}}(J\theta) \geq 0$, we have that $\Omega(t)$ is a Hermitian metric for all $t \in [0,\frac{2}{n})$. Moreover, a straightforward computation shows that 
\begin{equation}
\label{volumeflow}
\Omega(t)^{n} = \Big (1-\frac{n}{2}t \Big)^{n-1}\Omega_{0}^{n}.
\end{equation}
Thus, we have ${\rm{Ric}}^{(1)}(\Omega(t)) = {\rm{Ric}}^{(1)}(\Omega_{0})$,  $\forall t \in [0,\frac{2}{n})$. From this, we conclude that 
\begin{equation}
\frac{\partial}{\partial t} \Omega(t) = - {\rm{Ric}}^{(1)}(\Omega_{0}) = - {\rm{Ric}}^{(1)}(\Omega(t)), 
\end{equation}
for all $t \in [0,\frac{2}{n})$. Therefore, the family of Hermitian metrics $\Omega(t) = \Omega_{0} - t{\rm{Ric}}^{(1)}(\Omega_{0})$ on $\Sigma_{\phi,c}(Q)$ gives a solution of the Chern-Ricci flow on the maximal existence interval $[0,T)$, such that $T = \frac{2}{n}$. Let us observe that 
\begin{equation}
{\rm{d}}\Omega(t) = \theta(t) \wedge \Omega(t), \ \ {\text{such that }} \ \ \theta(t) = \frac{T}{T-t}\theta,
\end{equation}
for all $t \in [0,T)$. Moreover, one can easily verify that $\theta(t)$ is parallel with respect to the Levi-Civita connection induced by $\Omega(t)$. Therefore, we have that $\Omega(t)$ is Vaisman for all $t \in [0,T)$. Also, from Eq. (\ref{volumeflow}) and Eq. (\ref{flowsolution}), it follows that
\begin{equation}
\lim_{t \to T}{\rm{Vol}}\big (\Sigma_{\phi,c}(Q), \Omega(t)\big ) = 0 \ \ \ {\text{and}} \ \ \ \lim_{t \to T}\Omega(t) = \theta \wedge J\theta, 
\end{equation}
i.e., $\Omega(t)$ is finite-time collapsing \cite{TosattiWeinkove2}. Observing that $\Omega_{T}:= \theta \wedge J\theta$ is a nonnegative $(1,1)$-form, we conclude that the behavior of the Chern-Ricci flow $\Omega(t)$ is quite similar to the behavior of the Chern-Ricci flow on Hopf manifolds provided in \cite[Porposition 1.8]{TosattiWeinkove}. Further, as it was shown in \cite{GillSmith}, finite-time singularities are characterized by the blow-up of the Chern scalar curvature. This last fact can be easily verified for the Chern-Ricci flow describe in Eq. (\ref{flowsolution}). In fact, by a straightforward computation one can show that
\begin{equation}
{\rm{d}}(J \theta) \wedge \Omega(t)^{n-1} = -(n-1)!\bigg ( \frac{n-1}{1-\frac{n}{2}t}\bigg) \frac{\Omega(t)^{n}}{n!}.
\end{equation}
Thus, since ${\rm{Ric}}^{(1)}(\Omega(t)) = {\rm{Ric}}^{(1)}(\Omega_{0})$,  $\forall t \in [0,T)$, we obtain the following
\begin{equation}
\frac{s_{C}(\Omega(t))}{n} \Omega(t)^{n} = -\frac{n}{2}{\rm{d}}(J \theta) \wedge \Omega(t)^{n-1} = \frac{1}{2}\bigg ( \frac{n-1}{1-\frac{n}{2}t}\bigg) \Omega(t)^{n}.
\end{equation}
From above we have
\begin{equation}
\label{Chernscalarflow}
s_{C}(\Omega(t)) =  \frac{n}{2}\bigg ( \frac{n-1}{1-\frac{n}{2}t} \bigg ) = \frac{n-1}{T - t}, 
\end{equation}
for all $0 \leq t < T$. It follows that $s_{C}(\Omega(t)) \to +\infty$ as $t \to T$ (cf. \cite{GillSmith}). Also, denoting by $g(t)$ the underlying Riemannian metric associated to $\Omega(t)$, it follows from Eq. (\ref{scalarfunction}) that
\begin{equation}
\label{Rscalarflow}
{\rm{scal}}(g(t)) = \frac{n(n-1)}{(1-\frac{n}{2}t)^{2}} \bigg [\frac{2n-1}{2n} - \frac{n}{2}t\bigg ] = \frac{n(n-1)}{(T-t)^{2}} \bigg [T - \frac{1}{n^{2}} - t\bigg ],
\end{equation}
for all $0 \leq t < T$. Hence, we obtain ${\rm{scal}}(g(t))  \to -\infty$ as $t \to T$. In particular, we have:
\begin{enumerate}
\item[(a)] $0 \leq t < T-\frac{1}{n^{2}} \Longrightarrow {\rm{scal}}(g(t)) > 0$;
\item[(b)] $t = T-\frac{1}{n^{2}} \Longrightarrow {\rm{scal}}(g(t)) = 0$;
\item[(c)] $T-\frac{1}{n^{2}} < t < T  \Longrightarrow {\rm{scal}}(g(t)) < 0$.
\end{enumerate}
From above, we obtain a complete picture of the behavior of $ {\rm{scal}}(g(t))$, $t \in [0,T)$. Further, by means of a suitable change in the argument presented in \cite[$\S$ 4]{TosattiWeinkove2} one can show that 
\begin{equation}
\big (\Sigma_{\phi,c}(Q),d_{t}\big) \xrightarrow{\text{G.H.}} \big (S^{1},d_{S^{1}}\big), \ \ {\text{as}} \ \ t \to T, 
\end{equation}
where $d_{t}$ is the distance induced by $g(t)$ and $d_{S^{1}}$ is the distance on the unit circle $S^{1}$ induced by a suitable scalar multiple of the standard Riemannian metric. 
From Definition \ref{GHdistance}, we say that $\big (\Sigma_{\phi,c}(Q),d_{t}\big) \xrightarrow{\text{G.H.}} \big (S^{1},d_{S^{1}}\big)$, as $t \to T$, if 
\begin{equation}
\label{GHconv}
\lim_{t\to T}d_{GH}\big ((\Sigma_{\phi,c}(Q),d_{t}),(S^{1},d_{S^{1}}) \big ) =0.
\end{equation}
In order to conclude the proof, consider $F \colon Q \times \mathbbm{R} \to \mathbbm{R}$, such that $F(x,\varphi) = -2\varphi$, $\forall (x,\varphi) \in Q \times \mathbbm{R}$. From this, we set $F_{c} \colon \Sigma_{\phi,c}(Q) \to S^{1}$, such that
\begin{equation}
F_{c}([x,\varphi]) := \exp \bigg ( \frac{2\pi\sqrt{-1}F(x,\varphi)}{\log(c^{2})}\bigg), 
\end{equation}
for all $[x,\varphi] \in \Sigma_{\phi,c}(Q)$. Since 
\begin{equation}
F(\phi(x),\varphi + n \log(c)) = F(x,\varphi) - n\log(c^{2}),
\end{equation}
it follows that $F_{c}$ is a well-defined smooth map. Considering the canonical angular $1$-form ${\rm{d}}\sigma$ on $S^{1}$, a straightforward computation shows us that
\begin{equation}
F_{c}^{\ast}({\rm{d}}\sigma) = \frac{\theta}{\log(c^{2})}.
\end{equation}
From above, since $\theta$ is a non-vanishing $1$-form, it follows that $F_{c}$ is a submersion. Also, we notice that $\ker((F_{c})_{\ast}) = \ker(\theta(t))$, $\forall 0 \leq t < T$. Since
\begin{equation}
\label{decompositionmetric}
g(t) = \underbrace{-\Big (1-\frac{n}{2}t \Big){\rm{d}}(J\theta)(\mathbbm{1} \otimes J)}_{h(t)} + \underbrace{\theta \otimes \theta + J\theta \otimes J\theta}_{h_{T}},
\end{equation}
see Eq. (\ref{flowsolution}), by considering the $g(t)$-orthogonal complement $\ker((F_{c})_{\ast})^{\perp}$, it follows that
\begin{equation}
g(t)|_{\ker((F_{c})_{\ast})^{\perp}} = \theta \otimes \theta = \lambda^{2} F_{c}^{\ast}\big ({\rm{d}}\sigma \otimes {\rm{d}}\sigma\big),
\end{equation}
where $\lambda = \log(c^{2})$, i.e., $F_{c} \colon (\Sigma_{\phi,c}(Q),g(t)) \to (S^{1},\lambda^{2}{\rm{d}}\sigma \otimes {\rm{d}}\sigma)$ is a Riemannian submersion. Since $\Sigma_{\phi,c}(Q)$ is compact, we have that $F_{c} \colon \Sigma_{\phi,c}(Q) \to S^{1}$ is in fact a locally trivial fiber bundle with typical fiber diffeomorphic to $Q$ (e.g. \cite{Besse}). From $h_{T}$ given in Eq. (\ref{decompositionmetric}), we set
\begin{equation}
\mathcal{D}:= \big \{X \in T\Sigma_{\phi,c}(Q) \ \big | \ h_{T}(X,Y) = 0, \ \forall Y \big \}.
\end{equation}
Let us denote by $S \subset \Sigma_{\phi,c}(Q)$ a generic fiber of $F_{c}$. Denote also by $\mathcal{D}_{S}$ the distribution $\mathcal{D}$ restricted to $S$. By construction, since $\ker((F_{c})_{\ast}) = \ker(\theta)$, we have that $(S,\eta_{S}:=i^{\ast}(J\theta))$ is a contact manifold, where $i \colon S \hookrightarrow \Sigma_{\phi,c}(Q)$ is the natural inclusion, see for instance \cite{Dragomir}. Hence, we obtain the following description
\begin{equation}
\mathcal{D}_{S}= \big \{X \in TS \ \big | \ \eta_{S}(X) = 0 \big \},
\end{equation}
i.e. $\mathcal{D}_{S}$ is the contact distribution on $S$ induced by $\eta_{S}$. Since every contact distribution is a bracket-generating distribution, it follows from Chow's theorem \cite{Montgomery} that any two points of $S$ can be connected by a smooth path tangent to $\mathcal{D}_{S}$. It is worth observing that $g(t)|_{\mathcal{D}_{S}} = h(t)|_{\mathcal{D}_{S}}$, where $h(t)$ is given as in Eq. (\ref{decompositionmetric}). Given $p,q \in \Sigma_{\phi,c}(Q)$, let $\alpha$ be a path connecting $F_{c}(p)$ and $F_{c}(q)$ in $S^{1}$. Since $(\Sigma_{\phi,c}(Q),g(t))$ is complete, there exists a lift $\widetilde{\alpha}$ of $\alpha$ starting at $p$ and tangent to $\ker((F_{c})_{\ast})^{\perp}$. Let us denote by $z \in \Sigma_{\phi,c}(Q)$ the endpoint of the path $\widetilde{\alpha}$ (Figure \ref{paths}). From this, we have $d_{t}(p,z) \leq d_{S^{1}}(F_{c}(p),F_{c}(z)) = d_{S^{1}}(F_{c}(p),F_{c}(q))$, where $d_{S^{1}}$ denotes the distance induced by $\lambda^{2}{\rm{d}}\sigma \otimes {\rm{d}}\sigma$ on $S^{1}$. Hence, we obtain
\begin{equation}
d_{t}(p,q) \leq d_{t}(p,z) + d_{t}(z,q) \leq d_{S^{1}}(F_{c}(p),F_{c}(q)) +  d_{t}(z,q).
\end{equation}
Since $z,q \in F_{c}^{-1}(F_{c}(q)) = S$, we have a smooth path $\gamma \colon [0,1] \to \Sigma_{\phi,c}(Q)$ tangent to $\mathcal{D}_{S}$ connecting $z$ and $q$ (Figure \ref{paths}). 

\begin{figure}[H]
\includegraphics[scale = .22]{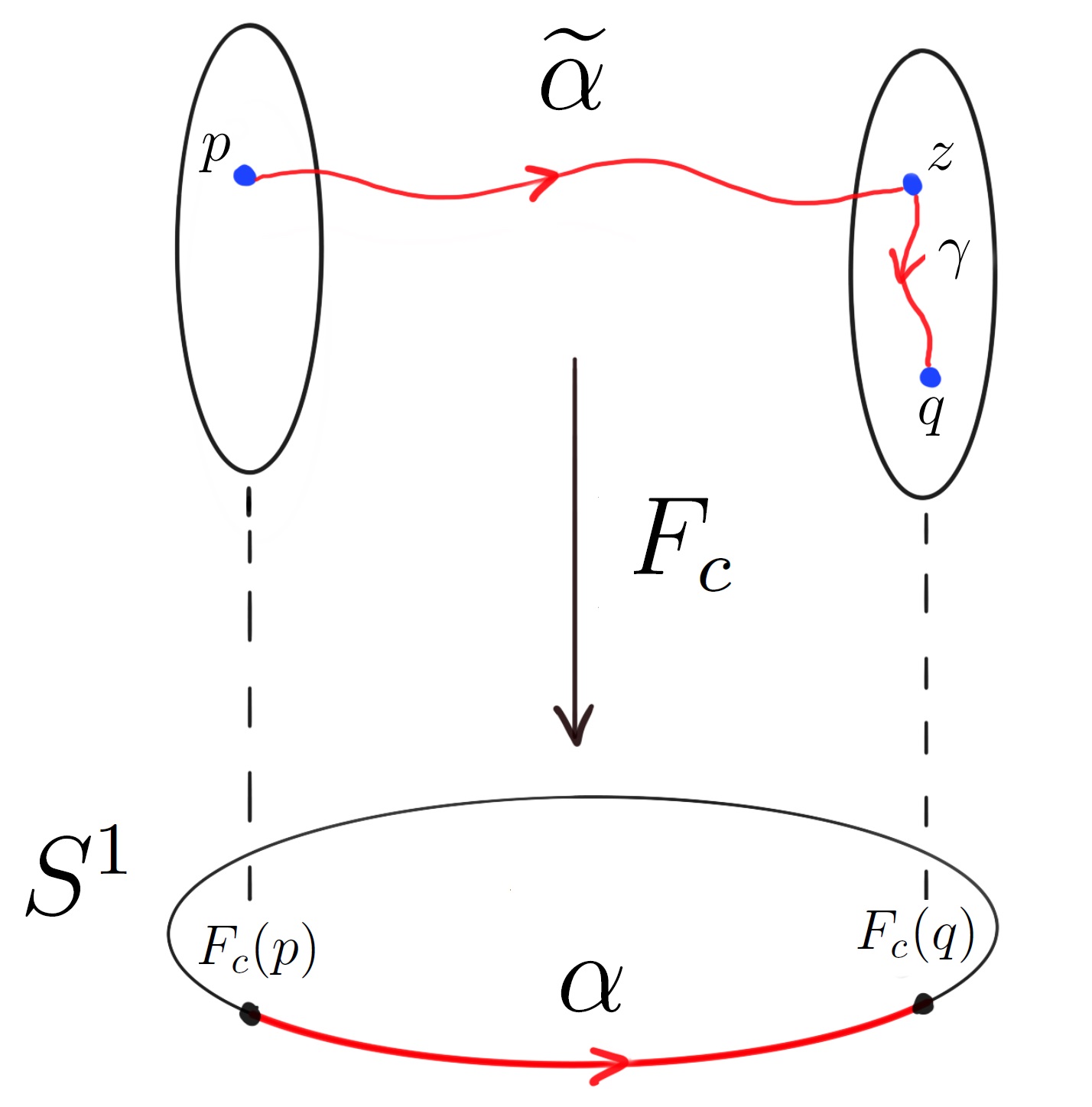}
\caption{Representation of horizontal and vertical paths used to estimate the distance between $p$ and $q$.}
\label{paths}
\end{figure}

Therefore, since $g(t)|_{\mathcal{D}_{S}} = h(t)|_{\mathcal{D}_{S}}$ and $\Sigma_{\phi,c}(Q)$ is compact, we obtain
\begin{equation}
 d_{t}(z,q) \leq \int_{0}^{1}||\gamma'(s)||_{g(t)}ds = \Big(1-\frac{n}{2}t \Big)\int_{0}^{1}||\gamma'(s)||_{g(0)}ds\leq C \Big (1-\frac{n}{2}t \Big),
\end{equation}
where $C := {\rm{diam}}\big(\Sigma_{\phi,c}(Q),g(0)\big)$. Hence, it follows that 
\begin{equation}
\label{estimate}
d_{t}(p,q) - d_{S^{1}}(F_{c}(p),F_{c}(q)) \leq  C \Big (1-\frac{n}{2}t \Big),
\end{equation}
for all $t \in [0,T)$ and for all $p,q \in \Sigma_{\phi,c}(Q)$. From $F_{c} \colon \Sigma_{\phi,c}(Q) \to S^{1}$, we set
\begin{equation}
R(F_{c}) := \big \{ (p,F_{c}(p)) \in \Sigma_{\phi,c}(Q) \times S^{1} \ \big  | \  p \in \Sigma_{\phi,c}(Q) \big \}.
\end{equation}
Since $F_{c}$ is a surjective map, it follows that $R(F_{c}) \in \mathcal{R}(\Sigma_{\phi,c}(Q),S^{1})$. From Eq. (\ref{estimate}), we conclude that 
\begin{equation}
d_{GH}\big ((\Sigma_{\phi,c}(Q),d_{t}),(S^{1},d_{S^{1}}) \big ) \leq \frac{1}{2}{\rm{dis}}(R(F_{c})) < \frac{nC}{2} \Big (T-t \Big).
\end{equation}
Therefore, it follows that 
\begin{equation}
\lim_{t \to T}d_{GH}\big ((\Sigma_{\phi,c}(Q),d_{t}),(S^{1},d_{S^{1}}) \big ) = 0.
\end{equation}
As it can be seen, the arguments provided above generalize certain ideas introduced in \cite[\S 4]{TosattiWeinkove2} for Hopf manifolds. Following Theorem \ref{isovaisman} and the above constructions, we conclude the proof of Theorem \ref{T3proof}. \end{proof}

\begin{example}
Consider ${\bf{\Sigma}}^{7} \times S^{1}$, where ${\bf{\Sigma}}^{7}$ is any one of the 28 homotopy $7$-spheres. Fixed a Hermitian Weyl-Einstein metric $\Omega_{WE}$ on ${\bf{\Sigma}}^{7} \times S^{1}$, it follows from Theorem \ref{T3proof} that there exists a family of Hermitian metrics $\Omega(t)$, $t \in [0,\frac{1}{2})$, satisfying 
\begin{equation}
\begin{cases}
\displaystyle \frac{\partial}{\partial t} \Omega = -{\rm{Ric}}^{(1)}(\Omega), \ \ 0 \leq t < \frac{1}{2}, \\
\Omega(0) = \Omega_{WE}.
\end{cases}
\end{equation}
Moreover, it follows form Eq. \ref{Chernscalarflow} and from Eq. \ref{Rscalarflow} that 
\begin{equation}
s_{C}(\Omega(t)) = \frac{6}{1 - 2t}, \ \ \ \ {\text{and}} \ \ \ \ {\rm{scal}}(g(t)) = \frac{48}{(1 - 2t)^{2}} \bigg [ \frac{7}{16} - t \bigg].
\end{equation}
From above, we obtain the following:
\begin{enumerate}
\item[(a)] $0 \leq t < \frac{7}{16} \Longrightarrow {\rm{scal}}(g(t)) > 0$;
\item[(b)] $t = \frac{7}{16} \Longrightarrow {\rm{scal}}(g(t)) = 0$;
\item[(c)] $\frac{7}{16} < t < \frac{1}{2}  \Longrightarrow {\rm{scal}}(g(t)) < 0$.
\end{enumerate}
Regarding ${\bf{\Sigma}}^{7} \times S^{1}$ as a suspension by $({\rm{id}},\sqrt{{\rm{e}}})$ of ${\bf{\Sigma}}^{7}$, from Theorem \ref{T3proof} we conclude that
\begin{equation}
\lim_{t \to \frac{1}{2}}d_{GH}\big (({\bf{\Sigma}}^{7} \times S^{1},d_{t}),(S^{1},d_{S^{1}}) \big ) = 0,
\end{equation}
where $d_{t}$ is the distance induced by $g(t)$ on ${\bf{\Sigma}}^{7} \times S^{1}$ and $d_{S^{1}}$ is the distance on the unit circle $S^{1}$ induced by the standard Riemannian metric.
\end{example}

\subsection*{Conflict of interest statement} The author declares that there is no conflict of interest.

\end{document}